\definecolor{c0}{HTML}{641a80}
\definecolor{c1}{HTML}{b73779}
\newtheorem{theorem}{Theorem}
\newtheorem{remark}{Remark}
\newtheorem{assumption}{Assumption}
\newtheorem{proposition}{Proposition}
\newtheorem{lemma}{Lemma}
\newtheorem{definition}{Definition}
\crefname{equation}{Eq.}{Eqs.}
\crefname{fact}{Fact}{Facts}
\crefname{assumption}{Assumption}{Assumptions}
\crefname{theorem}{Theorem}{Theorems}
\crefname{proposition}{Proposition}{Propositions}
\crefname{lemma}{Lemma}{Lemmas}
\crefname{section}{Section}{Sections}
\crefname{subsection}{Section}{Sections}
\crefname{figure}{Figure}{Figures}
\renewcommand{\d}{\mathrm{d}}
\newcommand{\T}{\mathsf{T}}
\title{Convergence Guarantees for Gradient-Based Training of Neural PDE Solvers: From Linear to Nonlinear PDEs}
\author{
Wei Zhao\textsuperscript{1} \\
\texttt{zhaowei01@sjtu.edu.cn}
\And
Tao Luo\textsuperscript{1,2,3}\thanks{Corresponding author (\texttt{luotao41@sjtu.edu.cn})} \\
\texttt{luotao41@sjtu.edu.cn}
\\[2mm]
\textsuperscript{1}School of Mathematical Sciences, Shanghai Jiao Tong University \\
\textsuperscript{2}Institute of Natural Sciences, MOE-LSC, Shanghai Jiao Tong University \\
\textsuperscript{3}CMA-Shanghai, Shanghai Jiao Tong University
}
\begin{document}

\maketitle

\begin{abstract}
We present a unified convergence theory for gradient-based training of neural network methods for partial differential equations (PDEs), covering both physics-informed neural networks (PINNs) and the Deep Ritz method. 
For linear PDEs, we extend the neural tangent kernel (NTK) framework for PINNs to establish global convergence guarantees for a broad class of linear operators. 
For nonlinear PDEs, we prove convergence to critical points via the \L{}ojasiewicz inequality under the random feature model, eliminating the need for strong over-parameterization and encompassing both gradient flow and implicit gradient descent dynamics. 
Our results further reveal that the random feature model exhibits an implicit regularization effect, preventing parameter divergence to infinity. 
Theoretical findings are corroborated by numerical experiments, providing new insights into the training dynamics and robustness of neural network PDE solvers.
\end{abstract}

\section{Introduction} \label{sec:intro}
Partial differential equations (PDEs) form the mathematical foundation for modeling phenomena across physics, engineering, and applied sciences. 
While linear PDEs are relatively well-understood, nonlinear PDEs, ubiquitous in modeling complex systems, pose significant analytical and computational challenges due to their lack of superposition principles and potential for solution singularities~\citep{evans2022partial, johnson2009numerical}.  
Recent advances in machine learning have introduced neural PDE solvers, such as physics-informed neural networks~\citep{raissi2019physics} and the Deep Ritz method~\citep{weinan2018deep}, as flexible alternatives to traditional numerical methods. 
These approaches have demonstrated empirical success in high-dimensional and nonlinear settings~\citep{lawal2022physics, karniadakis2021physics, ming2021deep,liu2023deep}, but their theoretical convergence guarantees remain limited, especially for nonlinear PDEs.  

Most existing convergence analyses for physics-informed neural networks are developed within the neural tangent kernel framework~\citep{NTK,li2020learning}, which primarily provides guarantees for second-order linear PDEs using over-parameterized networks~\citep{GD_cong_PINN,xu2024convergence,xu2024convergence1}. 
While it is commonly believed that NTK-based results could be extended to broader classes of linear PDEs, rigorous proofs beyond the second-order setting are still lacking.
For the Deep Ritz method, convergence analyses typically rely on coercivity of the bilinear form and Rademacher complexity estimates~\citep{duan2022convergence, jiao2024error,lu2021priori}; however, these approaches are mostly confined to linear elliptic equations with convex energy functionals, and the extension to general variational problems remains underexplored.
Crucially, neither framework currently offers provable convergence guarantees for solving nonlinear PDEs. 
In particular, when PINNs are used to solve equations with nonlinear differential operators, the associated NTK matrix evolves dynamically during training and, as shown in~\cite{bonfanti2024challenges}, fails to converge to a deterministic kernel in the infinite-width limit. 
For the Deep Ritz method, the non-convexity inherent in nonlinear PDEs further complicates the analysis.
This theoretical gap poses a major challenge to our understanding of neural PDE solvers in nonlinear regimes.

In this work, we overcome the aforementioned theoretical limitations by establishing a systematic convergence theory for neural PDE solvers across both linear and nonlinear regimes.
For linear PDEs, we extend the NTK framework to establish convergence guarantees for PINNs solving a broad class of linear operators, surpassing results limited to second-order cases. 
For nonlinear PDEs, we introduce a new approach using the \L{}ojasiewicz inequality~\citep{haraux2012some} to rigorously characterize optimization dynamics and guarantee convergence to critical points for important nonlinear cases. 
This provides the first convergence theory unifying PINNs and Deep Ritz methods in nonlinear settings.
More precisely, the main contributions of this paper are as follows:

(i) We establish convergence to global minima for over-parameterized PINNs in solving a broad class of linear PDEs, thereby significantly extending existing NTK-based results that are limited to second-order cases (\cref{thm:cong for lin}).

(ii) We provide a convergence framework for PINN and Deep Ritz solvers under both gradient flow and implicit gradient descent dynamics, assuming coercivity of the loss function (\cref{proposition:convergence under coer}).

(iii) Under the random feature model, we prove convergence to critical points for both PINN and Deep Ritz solvers when applied to a wide range of PDEs, including all evolutionary equations and several fundamental classes of nonlinear PDEs (\cref{theorem:convergence under flat,theorem:convergence for PINN}). 
Moreover, our analysis reveals an intrinsic regularization effect induced by the random feature model.

This paper is organized as follows.  
In \cref{sec:related}, we review related works on machine learning-based PDE solvers and existing convergence analyses.  
\cref{sec:pre} introduces the problem setting, and establishes general convergence results.  
\cref{sec:main results} presents our main convergence results for solving nonlinear PDEs under different cases.  
\cref{sec:experiments} provides experimental evidence supporting our theoretical findings.  
Finally, \cref{sec:conclusion} concludes the paper and discusses potential directions for future research.  
Technical proofs and supplementary materials are included in the appendix.

\section{Related works} \label{sec:related}

\paragraph{Machine learning PDE solvers.}  
There are various machine learning-based solvers for PDEs, among which physics-informed neural networks~\citep{raissi2019physics} and Deep Ritz method~\citep{weinan2018deep} are the most widely used. 
PINNs incorporate the PDE structure directly into the loss function, while Deep Ritz leverages the variational form of certain problems. 
Both methods have demonstrated remarkable empirical performance in solving a wide variety of nonlinear PDEs including, for example, the Allen--Cahn equation~\citep{wight2021solving} and  Schr{\"o}dinger equation~\citep{qiu2025data} across numerous applications~\citep{chen2024analysis, tang2023physics,  savovic2023comparative}. 
Despite their success, theoretical understanding of their convergence properties, particularly for nonlinear PDEs, remains limited and is an active area of ongoing research.
\vspace{-8pt}
\paragraph{Existing convergence analysis using NTK framework.}  
The neural tangent kernel (NTK) framework, which approximates over-parameterized neural networks as linear models with an almost constant Gram matrix during training, underpins much of the existing convergence analysis~\citep{NTK,li2020learning}. 
NTK was initially applied to study gradient descent in supervised learning settings~\citep{gd_cong2, luo2024two,du2018gradient}, and has been extended to analyze the convergence of PINNs for second-order linear PDEs~\citep{GD_cong_PINN,xu2024convergence,xu2024convergence1}. 
These results typically show that, for highly over-parameterized NTK-scaled neural networks, the training loss converges to zero with gradient-based optimization methods. 
\vspace{-8pt}
\paragraph{Convergence analysis using \L{}ojasiewicz inequality.}
The \L{}ojasiewicz inequality~\citep{haraux2012some} is a fundamental analytical tool in the field of optimization, especially for studying the convergence properties of gradient-based algorithms~\citep{bolte2007lojasiewicz,alaa2013convergence}. 
Traditionally, it has been widely used to analyze the convergence in various non-convex and nonsmooth optimization problems~\citep{schneider2015convergence,attouch2010proximal,karimi2016linear}. 
In recent years, the \L{}ojasiewicz inequality has also been increasingly applied in the context of supervised learning~\citep{forti2006convergence,li2023convergence}. 
Researchers have leveraged this inequality to study the convergence behavior of machine learning algorithms, providing theoretical guarantees for global or local convergence under mild assumptions~\citep{lee2016gradient,ahmadova2023convergence}.

\section{Mathematical Setup and General Supporting Results} \label{sec:pre}
In this section, we present the basic mathematical setup and introduce both PINNs and Deep Ritz solvers. 
In~\cref{sec:general convergence results}, we establish two main results: first, a rigorous global convergence guarantee for PINNs in solving a broad class of linear PDEs based on the NTK approach; second, a more general convergence result to critical points, which serves as the foundation for our subsequent analysis of nonlinear PDEs.

\subsection{Problem setting}\label{sec:problem}
We consider a general class of partial differential equations defined on an open bounded domain $\Omega \subset \mathbb{R}^{d}$ with $d > 1$, taking the following form:
\begin{equation}
    \begin{cases}
        \mathcal{L}u = f, & x \in \Omega, \\
        \mathcal{B}u = g, & x \in \partial\Omega,\label{eq:PDE}
    \end{cases}   
\end{equation}
where $\mathcal{L}$ represents a differential operator that may be linear or nonlinear, and $f\in L^{\infty}(\Omega)$ denotes the source term. 
For evolutionary PDE, we adopt the convention where the first component of $x$ represents the temporal dimension while the remaining components correspond to spatial coordinates, thus naturally satisfying $d > 1$.
The boundary conditions are encoded through the operator~$\mathcal{B}$, which we specify as Robin-type: $\alpha u(x) + \beta \frac{\partial u}{\partial n}(x) = g(x)$ for $x \in \partial\Omega$, where $\alpha, \beta \in \mathbb{R}$ are not both zero, $g \in L^2(\partial\Omega)$ is the prescribed boundary data, and $\frac{\partial u}{\partial n}$ denotes the outward normal derivative.

In this work, we focus on two neural network-based approaches for solving PDEs: physics-informed neural networks (PINNs) and the Deep Ritz method. 
Both leverage the expressive power of deep networks to approximate the solution \(u\).
In PINNs, a neural network \(u_\theta(x)\) parameterized by \(\theta\) is trained by minimizing the composite loss:
\begin{equation}
\mathcal{J}_{\text{PINN}}(\theta) = \int_{\Omega} \left( \mathcal{L} u_\theta(x) - f(x) \right)^2 \d{x} + \lambda  \int_{\partial\Omega}  \left( \mathcal{B}u_\theta(x) - g(x) \right)^2 \d{x}, \label{eq:PINN loss}
\end{equation}
where \(\lambda \geq 0\) balances the PDE residual and boundary losses.
The Deep Ritz method, applicable to PDEs with variational structure $\mathcal{E}(u)$, seeks a minimizer $u_{\theta}$ of the loss function:
\begin{equation}
\mathcal{J}_{\text{Ritz}}(\theta) = \mathcal{E}(u_\theta) + \lambda \int_{\partial\Omega} \left( \mathcal{B}u_\theta(x) - g(x) \right)^2 \d{x}. \label{eq:Ritz loss}
\end{equation}

In our theoretical framework, we employ a two-layer neural network with $\tanh$ activation function to approximate the solution to \cref{eq:PDE}. 
Specifically, the network takes the form:  
\begin{equation}
u_{\theta}(x) = \sum\nolimits_{k=1}^{m} a_k \, \tanh(w_k^\T x + b_k), \label{eq:nn}
\end{equation}
where $\theta = \{(a_k, w_k, b_k)\}_{k=1}^{m}$ denotes all parameters, with $a_k\in \mathbb{R}, w_k \in \mathbb{R}^{d}$ and $b_k \in \mathbb{R}$. 
The $\tanh$ activation function is particularly well-suited for our analysis due to its analyticity and bounded derivatives. 
More crucially, it satisfies a key property (see~\cref{lemma:tanh}) that, in combination with its other features, underpins our convergence theory. 
The following lemma is proved in~\cref{sec:proof lemma tanh}.
\begin{lemma}[Linear independence]\label{lemma:tanh}
    Let \(m\) be a positive integer, and let \(\alpha, \beta \in \mathbb{R}\) be not both zero. Given real numbers \(p_1, \ldots, p_m\) such that \(p_i \neq \pm p_j\) for \(1 \le i \neq j\le m\), and \(q_1, \ldots, q_m \in \mathbb{R}\), the functions
    $
      \alpha \tanh(p_1 t + q_1)+\beta \tanh'(p_1 t + q_1)\ ,\ \ldots, \ \alpha \tanh(p_m t + q_m)+\beta \tanh'(p_m t + q_m)
    $
    are linearly independent over \(\mathbb{R}\).
\end{lemma}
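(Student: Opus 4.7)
The plan is to prove the lemma via a complex-analytic pole-counting argument. First I would assume, for contradiction, the existence of scalars $c_1, \ldots, c_m$, not all zero, with $\sum_{i=1}^m c_i f_i(t) = 0$ for every $t \in \mathbb{R}$, where $f_i(t) := \alpha \tanh(p_i t + q_i) + \beta \tanh'(p_i t + q_i)$. By analytic continuation, this identity extends to all $z \in \mathbb{C}$ where both sides are defined. Since $\cosh$ vanishes exactly at $z = i\pi(k + \tfrac{1}{2})$ for $k \in \mathbb{Z}$, the function $\tanh$ has simple poles at those points and $\tanh' = \mathrm{sech}^2$ has double poles. Assuming $p_i \neq 0$, the function $f_i$ is therefore meromorphic on $\mathbb{C}$ with pole set $\{z_{i,k} := (i\pi(k+\tfrac{1}{2}) - q_i)/p_i : k \in \mathbb{Z}\}$, and a short Laurent expansion near each $z_{i,k}$ gives coefficients $\alpha/p_i$ on $(z - z_{i,k})^{-1}$ and $-\beta/p_i^2$ on $(z - z_{i,k})^{-2}$; since $(\alpha, \beta) \neq (0, 0)$, every such pole is genuine.

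Next I would exploit the hypothesis $p_i \neq \pm p_j$, which guarantees the $|p_i|$ are pairwise distinct. Reorder so that $|p_1| > |p_2| > \cdots > |p_m|$. The key geometric observation is that the pole $z_{1,0}$ of $f_1$ satisfies $|\mathrm{Im}(z_{1,0})| = \pi/(2|p_1|)$, which is strictly smaller than $|\mathrm{Im}(z_{j,k})| \geq \pi/(2|p_j|)$ for every $j \geq 2$ and every $k \in \mathbb{Z}$. Consequently each $f_j$ with $j \geq 2$ is holomorphic in a neighborhood of $z_{1,0}$, so the meromorphic identity $\sum c_i f_i \equiv 0$ forces the singular part of $c_1 f_1$ at $z_{1,0}$ to vanish, whence $c_1 = 0$. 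Iterating this peeling argument on the remaining sum $\sum_{i \geq 2} c_i f_i \equiv 0$ yields $c_i = 0$ for all $i$, producing the desired contradiction. The edge case of some $p_i = 0$ (at most one index, by distinctness of the $|p_i|$) produces a constant $f_i$, which is holomorphic on all of $\mathbb{C}$ and is easily absorbed into the peeling step.

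The main obstacle I anticipate is precisely what the hypothesis $p_i \neq \pm p_j$ is designed to avert: if two parameters had equal modulus, the strip-of-holomorphy argument would fail because poles of distinct $f_j$ could lie at the same height and, in fact, coincide, forcing a more delicate residue comparison. The strict inequality $|p_1| > |p_2| > \cdots$ is exactly what permits the clean inductive peeling; no further hypothesis on the phase shifts $q_i$ is needed, because distinctness of $|p_i|$ already separates the horizontal pole strips. A secondary subtlety is verifying that the two Laurent coefficients displayed above do not simultaneously vanish, which reduces to the trivial observation that $(\alpha,\beta) \neq (0,0)$ by assumption.
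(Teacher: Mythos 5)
Your argument is correct, and it takes a genuinely different route from the paper. The paper works algebraically: after reducing (via oddness/evenness) to positive, distinct $p_i$, it substitutes $\tanh(t) = (\mathrm{e}^{2t}-1)/(\mathrm{e}^{2t}+1)$ and $\tanh'(t) = 4\mathrm{e}^{2t}/(\mathrm{e}^{2t}+1)^2$, clears denominators, and extracts the coefficient of $\mathrm{e}^{2(p_1 t + q_1)}$ using the linear independence of exponentials with distinct growth rates; it requires an auxiliary identity $\sum_i c_i = 0$ (obtained by sending $t\to\infty$), a case split between $\beta = 0$ and $\beta\neq 0$, and a normalization to avoid $\beta = -\tfrac12$. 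Your meromorphic-continuation approach avoids all of that bookkeeping: the pair $(\alpha,\beta)$ is handled uniformly since the Laurent tail of $f_i$ at any of its poles carries $\alpha/p_i$ at order $-1$ and $-\beta/p_i^2$ at order $-2$, which cannot vanish simultaneously; and the hypothesis $p_i\neq\pm p_j$ is given a transparent geometric meaning, namely that the $|p_i|$ are pairwise distinct and hence the pole strips $\{\,|\mathrm{Im}\,z| = \pi(k+\tfrac12)/|p_i|\,\}$ are vertically separated, allowing the inductive peel at the pole closest to the real axis. This is arguably tidier than the paper's version.

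One caveat worth recording: both proofs (yours and the paper's) silently assume every $p_i\neq 0$. The lemma as stated only excludes $p_i = \pm p_j$ for $i\neq j$, so a single $p_i = 0$ is permitted, in which case $f_i$ degenerates to the constant $\alpha\tanh(q_i) + \beta\tanh'(q_i)$, which can be identically zero for suitable $(\alpha,\beta,q_i)$ and would defeat the conclusion. Your remark that the constant is ``easily absorbed into the peeling step'' does not actually handle that degenerate $f_i\equiv 0$ possibility; the paper's WLOG reduction to positive $p_i$ also quietly skips it. This is a gap in the lemma's hypotheses rather than in your specific reasoning, but if you present the proof you should either append $p_i\neq 0$ to the hypotheses or note that the degenerate constant must be assumed nonzero.
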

\begin{remark}[On the choice of activation functions]
Our analysis relies on three key properties of the activation function: bounded derivatives, analyticity, and the linear independence property stated in~\cref{lemma:tanh}. 
These hold for a broad class of analytic activations, such as $\mathrm{sigmoid}$ and $\arctan$. 
The theoretical framework can be readily extended to any activation function satisfying these conditions.
\end{remark}

\subsection{General convergence results}\label{sec:general convergence results}
This subsection presents two convergence results for neural PDE solvers. While the NTK framework guarantees global convergence for solving most linear PDEs, it can not extend to nonlinear cases. 
This limitation motivates our alternative approach based on \L{}ojasiewicz analysis, which establishes critical point convergence beyond the linear setting.

\subsubsection{NTK-based convergence for most linear PDEs}\label{sec:convergence under linear case}
While prior PINN convergence theories have mainly focused on second-order linear PDEs, such as the heat equation, we extend existing analytical techniques to establish the first global convergence guarantees for solving a broad class of linear PDEs. 
We begin by introducing some notations.

\paragraph{Notation:}
Let \(\xi = (\xi_1, \ldots, \xi_d) \in \mathbb{N}^d\) be a \(d\)-dimensional multi-index, where \(\mathbb{N}\) denotes the set of non-negative integers. 
Given a vector \(x = (x_1, \ldots, x_d)^\T \in \mathbb{R}^d\), we define the \(\xi\)-th power of \(x\) as \(x^\xi := \prod_{i=1}^d x_i^{\xi_i}\). 
For a sufficiently smooth function \(u : \mathbb{R}^d \to \mathbb{R}\), its \(\xi\)-th partial derivative is denoted by \(\partial^\xi u := \frac{\partial^{|\xi|} u}{\partial x_1^{\xi_1} \cdots \partial x_d^{\xi_d}}\), where \(|\xi| := \sum_{i=1}^d \xi_i\) represents the order of the derivative.  
For two positive functions $f_1(n)$ and $f_2(n)$, we use $f_1(n) = \mathcal{O}(f_2(n))$, $f_2(n) = \Omega(f_1(n))$, or $f_1(n) \lesssim f_2(n)$ to indicate that $f_1(n) \leq C f_2(n)$, where $C$ is a universal constant. 
If we further omit some logarithmic terms with the existence of polynomial terms, we adopt $f_1(n) = \widetilde{\mathcal{O}}(f_2(n))$ and $f_2(n) = \widetilde{\Omega}(f_1(n))$.

\begin{definition}[Admissible linear operators]\label{def:ad lin op}
Let $\mathcal{L}$ be a linear differential operator of the form
$
\mathcal{L}u(x) = \sum_{k=0}^{\infty} \sum_{|\xi|=k} c_{\xi}(x)\, \partial^\xi u,
$
where only finitely many coefficients $c_{\xi}$ are nonzero.
We require that all nonzero $c_{\xi} \in L^\infty(\Omega)$, and that there exists a maximal multi-index $\tilde{\xi}$ such that $c_{\tilde{\xi}} \neq 0$ and $|\tilde{\xi}| > |\xi|$ for all other $\xi$ with $c_{\xi} \neq 0$.
\end{definition}

Under the neural tangent kernel framework, we use a rescaled two-layer neural network of the form:
\begin{equation}
u_{\theta}(x) = \frac{1}{\sqrt{m}}\sum\nolimits_{k=1}^m a_k \tanh(w_k^T x + b_k) ,\label{eq:ntk}
\end{equation}
where the scaling factor \( \frac{1}{\sqrt{m}}\) ensures proper normalization for theoretical analysis. 
Within the PINN framework, the empirical loss combines PDE residual and boundary terms on collocation points as follows,
\begin{equation}
\mathcal{J}_{\text{emp}}(\theta) = \frac{1}{n_1} \sum_{i=1}^{n_1}\frac{1}{2} \left|\mathcal{L}u_\theta\left(x_i^{(1)}\right) - f\left(x_i^{(1)}\right)\right|^2 + \frac{\lambda}{n_2} \sum_{j=1}^{n_2}\frac{1}{2} \left|\mathcal{B}u_\theta\left(x_j^{(2)}\right) - g\left(x_j^{(2)}\right)\right|^2,  \label{eq:emp}
\end{equation} 
with collocation points \(\{x_i^{(1)}\}_{i=1}^{n_1} \subset \Omega\) and \(\{x_j^{(2)}\}_{j=1}^{n_2} \subset \partial\Omega\). 
Under gradient flow training, we show that $\mathcal{J}_{\text{emp}}(\theta(t))$ converges to the global minimum of the empirical loss if $\mathcal{L}$ is admissible.
\begin{theorem}[Convergence for admissible linear PDEs]\label{thm:cong for lin}
Assume that the linear differential operator $\mathcal{L}$ is admissible. Consider the gradient flow dynamics for \cref{eq:emp}, 
$
    \frac{\d \theta(t)}{\d t} = -\nabla \mathcal{J}_{\text{emp}}(\theta).
$
Given training samples $\{x_i^{(1)}\}_{i=1}^{n_1} \subset \Omega$ and $\{x_j^{(2)}\}_{j=1}^{n_2} \subset \partial\Omega$, initialize the parameters in \cref{eq:ntk} as
$
    a_k \sim \text{Unif}\{-1, 1\},\ w_k \sim \mathcal{N}(0, \mathbf{I}_d), \ b_k \sim \mathcal{N}(0, 1) \ \text{i.i.d.}
$
Then, with probability at least $1-\delta$,
\[
    \mathcal{J}_{\text{emp}}(\theta(t)) \leq \exp\big( -(\lambda_0 + \tilde{\lambda}_0) t \big)\, \mathcal{J}_{\text{emp}}(\theta(0)),\quad \forall t \ge 0,
\]
provided that
$
    m = \widetilde{\Omega}\left(
    \frac{1}{\left(\lambda_0 + \tilde{\lambda}_0\right)^2}
    d^{4|\tilde{\xi}|} 
    \left(\log\left(\frac{n_1+n_2}{\delta}\right)\right)^{4|\tilde{\xi}|} 
    \frac{d^3}{\min\{\lambda_0^2,\, \tilde{\lambda}_0^2\}}
    \right).
$
\end{theorem}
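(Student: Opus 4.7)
The plan is to adapt the standard NTK analysis for PINNs (currently developed for second order operators) to an arbitrary admissible $\mathcal{L}$, with the admissibility condition ensuring that a unique top order multi-index $\tilde{\xi}$ dominates the high-frequency structure of the residual kernel. I would proceed in four steps.

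\emph{Step 1: Kernel formulation of the dynamics.} First I would write the residual vector $r(t) \in \mathbb{R}^{n_1+n_2}$ whose first $n_1$ entries are $\mathcal{L}u_\theta(x_i^{(1)}) - f(x_i^{(1)})$ and last $n_2$ entries are $\sqrt{\lambda}(\mathcal{B}u_\theta(x_j^{(2)}) - g(x_j^{(2)}))$, and differentiate it along the gradient flow. This produces $\dot r = -H(\theta(t))\, r$, where $H(\theta) = J J^\T$ is the empirical NTK, with $J$ the Jacobian of the residual map in $\theta$. Splitting $H = H^{(1)} + H^{(2)}$ according to interior and boundary contributions, the exponential rate in the theorem corresponds to $\lambda_{\min}(H^{(1)}(\theta(0))) \ge \lambda_0$ and $\lambda_{\min}(H^{(2)}(\theta(0))) \ge \tilde\lambda_0$.

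\emph{Step 2: Positivity of the NTK at initialization.} Using linearity of $\mathcal{L}$ and $\mathcal{B}$, the Jacobian entries factor through derivatives of $\tanh(w_k^\T x + b_k)$ of total order at most $|\tilde\xi|$. Invoking \cref{lemma:tanh}, together with the analyticity and boundedness of $\tanh$ and its derivatives, lets me represent the expected NTK $H^\infty := \mathbb{E}_{\theta(0)}[H(\theta(0))]$ as a sum of rank-one operators in feature space; the admissibility hypothesis ensures that $\partial^{\tilde\xi}$ provides a non-degenerate dominant component that makes $H^\infty$ strictly positive definite on distinct collocation points. A Hoeffding/Bernstein concentration over $m$ random neurons then gives $\|H(\theta(0)) - H^\infty\| \le \tfrac{1}{2}\min\{\lambda_0, \tilde\lambda_0\}$ with probability $1-\delta$ provided $m = \widetilde\Omega(d^3/\min\{\lambda_0^2, \tilde\lambda_0^2\})$, which explains that factor in the width bound.

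\emph{Step 3: Lazy training / kernel perturbation.} I would next prove that along gradient flow, parameters stay in a small $\|\theta(t)-\theta(0)\|$ ball. Since each $\tanh^{(k)}$ with $k \le |\tilde\xi|$ is Lipschitz in $(w,b)$ on compact sets (with constants that grow like $\|x\|^{|\tilde\xi|}$), the Jacobian is locally Lipschitz, and one obtains $\|H(\theta(t)) - H(\theta(0))\| \le C \cdot d^{|\tilde\xi|} (\log((n_1+n_2)/\delta))^{|\tilde\xi|}\, \|\theta(t)-\theta(0)\|/\sqrt{m}$. Combined with the standard bootstrap argument $\|\theta(t)-\theta(0)\| \le \|r(0)\| \cdot \|J\|/(\lambda_0+\tilde\lambda_0)$ and $\|r(0)\|^2 \lesssim \mathcal{J}_{\text{emp}}(\theta(0))$, this forces the chosen width $m = \widetilde\Omega(d^{4|\tilde\xi|}(\log((n_1+n_2)/\delta))^{4|\tilde\xi|}/(\lambda_0+\tilde\lambda_0)^2)$ to be sufficient to guarantee $\lambda_{\min}(H(\theta(t))) \ge \tfrac12(\lambda_0+\tilde\lambda_0)$ for all $t\ge 0$.

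\emph{Step 4: Closing the exponential decay.} Plugging this lower bound into $\tfrac{\d}{\d t}\|r\|^2 = -2 r^\T H(\theta(t)) r$ yields $\|r(t)\|^2 \le \exp(-(\lambda_0+\tilde\lambda_0)t)\|r(0)\|^2$, which is precisely $\mathcal{J}_{\text{emp}}(\theta(t)) \le \exp(-(\lambda_0+\tilde\lambda_0)t)\mathcal{J}_{\text{emp}}(\theta(0))$. The two widths are then combined by taking the max, matching the stated bound. The main obstacle I expect is \emph{Step 2}: establishing positive definiteness of $H^\infty$ uniformly for arbitrary-order operators. Existing proofs (second order heat/Poisson) exploit explicit formulas for $\tanh'$ and $\tanh''$; the generalization requires extracting the leading $\partial^{\tilde\xi}$ term from a sum of mixed derivatives of $\tanh(w^\T x + b)$, showing that the contributions of lower-order multi-indices cannot cancel the dominant one in expectation over $(w,b)$. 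This is exactly the role of \cref{lemma:tanh}, which transfers linear independence of shifted $\tanh$ atoms to linear independence of the residual feature maps $x \mapsto \mathcal{L}\tanh(w^\T x + b)$ and $x \mapsto \mathcal{B}\tanh(w^\T x + b)$ at distinct collocation points, yielding the required strict positivity of $H^\infty$.
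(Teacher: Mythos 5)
Your four-step architecture (kernel ODE, initialization positivity, lazy-training perturbation bound, Gr\"onwall) matches the paper's strategy at a high level, but two of the steps, as you have specified them, would not go through.

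First, the decomposition of the NTK. You split $H = H^{(1)} + H^{(2)}$ ``according to interior and boundary contributions'' and identify $\lambda_0 = \lambda_{\min}(H^{(1)})$, $\tilde\lambda_0 = \lambda_{\min}(H^{(2)})$. This split cannot yield the rate $\lambda_0+\tilde\lambda_0$: an interior-residual Gram block is supported only on the first $n_1$ coordinates of $\mathbb{R}^{n_1+n_2}$ and a boundary-residual block only on the last $n_2$, so both have nontrivial kernels and both minimum eigenvalues are zero on the full space. The paper instead decomposes $H = \mathbf{G} + \widetilde{\mathbf{G}}$ by which \emph{parameter block} is differentiated -- the inner-layer weights $\mathbf{W}$ versus the outer-layer coefficients $\mathbf{a}$. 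Both $\mathbf{G}$ and $\widetilde{\mathbf{G}}$ are full $(n_1+n_2)\times(n_1+n_2)$ Gram matrices involving \emph{all} collocation points, each is shown to be strictly positive definite, and then $\lambda_{\min}(H)\ge\lambda_0+\tilde\lambda_0$. Your bootstrap in Step 3 would not close without this structure.

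Second, the positive-definiteness mechanism. You propose to invoke \cref{lemma:tanh} for strict positivity of $H^\infty$, but that lemma establishes linear independence, as functions of $t\in\mathbb{R}$, of the atoms $\alpha\tanh(p_k t+q_k)+\beta\tanh'(p_k t+q_k)$ indexed by the \emph{neuron} index $k$. What is actually required here is linear independence of the feature maps $w\mapsto\partial_w\varphi(y_p;w)$ and $w\mapsto\partial_w\psi(y_j;w)$ indexed by \emph{collocation points}, as elements of the Gaussian-weighted $L^2$ space over $w$. \cref{lemma:tanh} does not speak to this, and the paper does not use it in the proof of \cref{thm:cong for lin}. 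The paper's argument is to differentiate a vanishing linear combination $l-1 = n_1+n_2-1+d|\tilde\xi|$ times in $w$ and read off coefficients against the linearly independent tensors $y_1^{(1),\otimes(n_1+n_2-1)},\dots,y_{n_2}^{(2),\otimes(n_1+n_2-1)}$ (Lemma G.6 of \cite{gd_cong2}); the admissibility hypothesis ensures the bracketed leading-order factor does not vanish as $\|w\|\to\infty$, forcing all coefficients to zero. That high-order-differentiation plus tensor-independence argument is the genuine new ingredient and cannot be replaced by the one-dimensional atom lemma. A smaller issue: plain Hoeffding/Bernstein is not applicable for concentration of $H(\theta(0))$ around $H^\infty$, since the kernel entries scale like $1+\|w_r\|_2^{2|\tilde\xi|}$, which is heavy-tailed (sub-Weibull of order $1/|\tilde\xi|$); the paper's sub-Weibull concentration inequality is exactly what produces the $d^{2|\tilde\xi|}(\log\frac{n_1+n_2}{\delta})^{2|\tilde\xi|}$ factors in the width bound.
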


\begin{remark}
The proof of this theorem is provided in \cref{sec:proof cong for lin}. As shown in the proof, \(\lambda_0, \tilde{\lambda}_0\) are actually the minimum eigenvalues of the Gram matrices respectively.    
\end{remark}

\begin{remark}[Various extensions]
\cref{thm:cong for lin} can extend to several broader contexts: 

(1) \emph{Other training dynamics:} Leveraging the NTK analysis, the theorem applies to gradient descent with sufficiently small step sizes, implicit gradient descent, and other initialization schemes. 
Possible extensions to SGD are discussed in \cref{sec:sgd linear}.

(2) \emph{More complex network architectures:} The proof strategy adapts to deeper networks, following extensions of NTK theory in the supervised learning~\citep{du2018gradient}; see also \cref{sec:deeper linear}.

(3) \emph{Broader classes of linear operators:} While we focus on admissible linear operators, similar techniques apply to a broader class of linear operators. 
Further details are omitted for brevity.
\end{remark}

\subsubsection{From linear to nonlinear PDEs}\label{sec:from lin to non} 
Our NTK-based convergence theory for linear PDEs relies on the near-constancy of the Gram matrix during training, a property that does not hold for nonlinear PDEs as proved in \cite{bonfanti2024challenges}.
To illustrate this point more intuitively, we present a numerical experiment.
Consider the viscous Burgers' equation: 
\begin{equation}
    \begin{cases}
        u_{t}+uu_{x} = \frac{0.01}{\pi}u_{xx}, &\ t\in (0,1), \ x \in(-1,1), \\
        u(0,x) = -\sin(\pi x), &\ x \in (-1,1),\\
        u(t,-1) =  u(t,1) = 0, &\ t\in (0,1).\label{eq:Bur eq}
    \end{cases}   
\end{equation}

We demonstrate the failure of NTK theory for Burgers' equation using a neural network with architecture described in \cref{eq:ntk} (with width \( m=1000 \)); detailed experimental settings are provided in \cref{sec:setup burgers}.  
For simplicity, we train only the outer-layer parameters $a=(a_1, \ldots, a_m)^\top$ using implicit gradient descent (see \cref{sec:IGD PINN} for algorithmic details). 
During training, we track the evolution of two NTK matrices:
\begin{itemize}
    \item[(i)] The interior NTK matrix \( K_{\Omega}(t) = (K_{ij}) \) is computed from the derivatives of the PDE residual evaluated at 100 interior collocation points, \( X^{(1)} = \{(t_k^{(1)}, x_k^{(1)})\}_{k=1}^{100} \subset (0,1) \times (-1,1) \). Specifically,
    $
        K_{ij} = \left\langle \partial_a r_{\theta}(t_i^{(1)}, x_i^{(1)}),\ \partial_a r_{\theta}(t_j^{(1)}, x_j^{(1)}) \right\rangle, \quad i, j = 1, \ldots, 100,
    $
    where the PDE residual is
    $
        r_{\theta}(t, x) = \left( \partial_t u_{\theta} + u_{\theta} \partial_x u_{\theta} - \frac{0.01}{\pi} \partial_{xx} u_{\theta} \right)(t, x).
    $

    \item[(ii)] The boundary NTK matrix \( K_{\partial\Omega}(t) = (\tilde{K}_{ij}) \) is computed from the derivatives of the network output at 20 sampled boundary points, \( X^{(2)} = \{(t_k^{(2)}, x_k^{(2)})\}_{k=1}^{20} \). Specifically,
    $
        \tilde{K}_{ij} = \left\langle \partial_a u_{\theta}(t_i^{(2)}, x_i^{(2)}),\ \partial_a u_{\theta}(t_j^{(2)}, x_j^{(2)}) \right\rangle, \quad i, j = 1, \ldots, 20.
    $
\end{itemize}
The IGD algorithm is run for 100 iterations with a step size of 0.5, where each inner optimization problem is approximately solved by applying the L-BFGS optimizer for 10 steps.  
As shown in \cref{fig:ntk}, \( K_{\Omega}(t) \) undergoes significant changes from its initial state within just a few iterations, whereas \( K_{\partial\Omega}(t) \) remains nearly unchanged, as the boundary operator is linear.
\begin{figure}[htbp]
    \centering
    \includegraphics[width=0.5\textwidth]{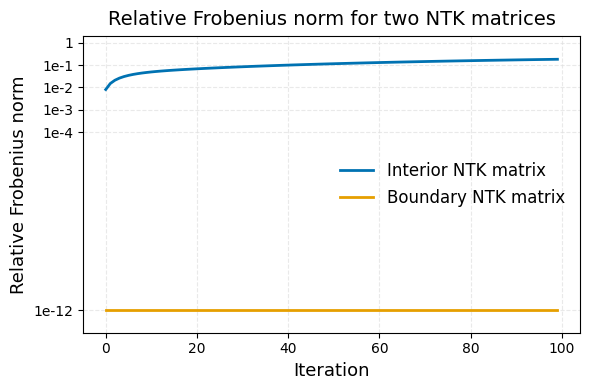}
    \caption{Evolution of relative Frobenius norm for two NTK matrices.}
    \label{fig:ntk}
\end{figure}

Thus, new mathematical tools are required to analyze the convergence of PINNs and Deep Ritz methods for solving nonlinear PDEs.
Given that strong overparameterization is difficult to verify in practice, we forgo this assumption and instead employ the \L{}ojasiewicz inequality to establish convergence to critical points, albeit with weaker guarantees.

\subsubsection{Convergence under coercivity}\label{sec:convergence under coer}
We now present a general result showing that coercivity of the loss function implies convergence.
In the subsequent section, we demonstrate the coercivity of the loss function, with a focus on those arising in nonlinear PDEs, thus allowing us to apply the general convergence result obtained here.
\begin{definition}[Coercivity]\label{def:coercive}
    A function \( \mathcal{J}(\theta) \) is said to be coercive if  
$
    \lim_{\|\theta\| \to + \infty} \mathcal{J}(\theta) = +\infty.
$ 
\end{definition}
The coercivity help to ensure boundedness of minimizing sequences, a crucial property for convergence analysis. 
We next introduce the \L{}ojasiewicz inequality, which is fundamental to our analysis.
\begin{theorem}[\L{}ojasiewicz inequality, Theorem 1.1 in \cite{haraux2012some}]\label{thm:Lj}
Let $U$ be an open subset of $\mathbb{R}^N$ and $F: U \to \mathbb{R}$ be a real analytic function.
Then for any $x$ in $U$ such that $\nabla F(x)=0$, there exist a neighbourhood $W$ of $x$ and a real number  $\epsilon \in (0,\frac{1}{2}]$ for which
$
    \forall y \in W, \ |F(y)-F(x)|^{1-\epsilon} \leq  \|\nabla F(y)\|. 
$
We call $\epsilon$ the \L{}ojasiewicz exponent of $F$ at $x$.
\end{theorem}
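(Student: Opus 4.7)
The plan is to derive this as a corollary of the classical \L{}ojasiewicz inequality for pairs of real analytic functions, which is the standard route to this statement.

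Translating if necessary, I may assume $x=0$ and $F(0)=0$. The classical \L{}ojasiewicz inequality states: if $f,g$ are real analytic on an open set and $f^{-1}(0)\subseteq g^{-1}(0)$, then on each compact subset there exist constants $\alpha,C>0$ with $|g|^{\alpha}\le C|f|$. I would apply this with $f:=\|\nabla F\|^{2}$ and $g:=F$, both of which are real analytic on $U$. The only nontrivial hypothesis to check is the zero-set inclusion
\begin{equation*}
\{\nabla F=0\}\cap W\subseteq\{F=0\}\cap W
\end{equation*}
on some neighbourhood $W$ of $0$.

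To verify this inclusion, pick any $y$ in the analytic germ $\{\nabla F=0\}$ at $0$; the curve selection lemma yields a real analytic path $\gamma:[0,1]\to\{\nabla F=0\}$ with $\gamma(0)=0$ and $\gamma(1)=y$, and then $\tfrac{d}{dt}F(\gamma(t))=\langle\nabla F(\gamma(t)),\gamma'(t)\rangle=0$, so $F(y)=F(0)=0$. Equivalently, one decomposes the germ into finitely many irreducible real analytic components $V_{1},\dots,V_{r}$ (each containing $0$) and applies the same path argument inside each $V_{i}$, concluding that $F$ is constant on each component and hence identically zero on the germ. With the inclusion in hand, the classical inequality yields $|F(y)|^{\alpha}\le C\|\nabla F(y)\|^{2}$ on a compact neighbourhood of $0$, hence $|F(y)|^{\alpha/2}\le\sqrt{C}\,\|\nabla F(y)\|$.

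To put the estimate into the stated form (no constant, exponent $1-\epsilon\in[1/2,1)$), I would shrink $W$ so that $|F|<1$ on $W$ and pick $\epsilon$ with $1-\epsilon$ slightly larger than $\alpha/2$; writing $|F|^{1-\epsilon}=|F|^{1-\epsilon-\alpha/2}\cdot|F|^{\alpha/2}$, a further shrinking of $W$ forces the first factor below $1/\sqrt{C}$, absorbing the constant. The range $\epsilon\in(0,1/2]$ is always attainable because the \L{}ojasiewicz exponent $\alpha/2$ of the pair $(F,\|\nabla F\|^{2})$ at a critical point lies in $[1/2,1)$, with $\alpha/2=1/2$ attained in the Morse (non-degenerate) case.

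The main obstacle — and the only genuinely deep input — is the classical \L{}ojasiewicz inequality itself, whose proof goes through semi-analytic resolution of singularities or, equivalently, the curve selection lemma applied to semi-analytic sets. I would quote this from the literature rather than reprove it; everything else (the germ decomposition, the path-wise constancy of $F$ on $\{\nabla F=0\}$, and the cosmetic exponent rearrangement) is elementary real-analytic geometry once that machinery is granted.
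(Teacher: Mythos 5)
The paper does not prove this statement at all: it is imported verbatim as Theorem~1.1 of the cited reference and used as a black box, so there is no ``paper's own proof'' to compare against. That said, your sketch is worth evaluating on its own terms, and it has one genuine gap at exactly the point the theorem is about.

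The reduction you set up is fine as far as it goes. Applying the classical \L{}ojasiewicz inequality for a pair of real analytic functions to $f=\|\nabla F\|^{2}$, $g=F-F(x)$ does require the zero-set inclusion, and your curve-selection argument for it is essentially correct (phrased a bit loosely: curve selection gives an arc from $x$ \emph{into} the semi-analytic set $\{\nabla F=0,\ F\neq F(x)\}$, not an arc joining $x$ to a prescribed $y$; applying it to show that this set does not accumulate at $x$ is the clean version). The constant-absorption at the end is also fine. The problem is the step where you claim that the resulting exponent $\alpha$ for the pair $(F,\|\nabla F\|^{2})$ satisfies $\alpha<2$, i.e.\ that one may take $1-\epsilon=\alpha/2<1$. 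The classical pairs inequality yields only \emph{some} $\alpha>0$; it gives no upper bound on $\alpha$, and since $|F|<1$ near $x$, a larger exponent on $|F|$ only makes the bound weaker, so you cannot trade down to $\alpha<2$ for free. The bound $\alpha<2$ (equivalently $\epsilon>0$) is precisely the nontrivial content of the \emph{gradient} \L{}ojasiewicz inequality and is not a corollary of the pairs inequality alone. You assert it (``the \L{}ojasiewicz exponent of the pair $(F,\|\nabla F\|^{2})$ at a critical point lies in $[1/2,1)$'') without an argument.

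The standard way to close this gap is a separate one-variable estimate along analytic arcs: if $\gamma$ is a real analytic curve through $x$ and $F\circ\gamma$ vanishes to order $N$ at $0$, then $\tfrac{d}{dt}(F\circ\gamma)$ has order $N-1$, $\gamma'$ is bounded, and so $|F(\gamma(t))|^{1-1/N}\lesssim\|\nabla F(\gamma(t))\|$, giving a curve-wise exponent strictly less than $1$. One then uses curve selection (or a semi-analytic partition into finitely many strata) to make $N$ uniform over a neighbourhood of $x$. Without some version of this step, the proposal proves a weaker statement in which $\epsilon$ could be $\leq 0$, which is vacuous; so as written it does not establish the theorem.
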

We denote the loss function of either PINN \cref{eq:PINN loss} or Deep Ritz \cref{eq:Ritz loss} as 
\begin{equation}
\mathcal{J}=residual\ (variational)\ term + \lambda \int_{\partial \Omega}(\mathcal{B}(u_{\theta})-g)^2\ \d x. \label{eq:loss}
\end{equation}
For this loss function, we consider two types of training dynamics. The first is gradient flow,
\begin{equation}
    \theta'(t)=-\nabla \mathcal{J}(\theta) .\label{eq:gf}
\end{equation}
which provides a continuous-time perspective on optimization. 
The second is implicit gradient descent (IGD), which we disscuss in detail in \cref{sec:IGD PINN},
\begin{equation}
    \theta^{k+1} = \theta^k - \eta \nabla \mathcal{J}(\theta^{k+1}), \quad k=0,1,\ldots \label{eq:IGD}
\end{equation}
where \(\eta\) is the step size.
As introduced in~\cite{IGD}, IGD enjoys greater stability compared to standard gradient descent and is particularly well-suited for multi-scale problems. 
We now establish convergence results for both dynamics under coercivity. The proof of the following proposition is provided in \cref{sec:proof convergence under coer}.
\begin{proposition}[Convergence under coercivity]\label{proposition:convergence under coer}
    Suppose $\mathcal{J}(\theta)$ is coercive in $\theta$. Then:
   
        (a) The solution $\theta(t)$ to the gradient flow \cref{eq:gf} converges to a critical point $\theta^*$ of $\mathcal{J}(\theta)$ as $t\to\infty$.
        
        (b) The sequence $\{\theta^k\}$ generated by \cref{eq:IGD} converges to a critical point $\theta^*$ of $\mathcal{J}(\theta)$ as $k\to\infty$.
    
    Furthermore, let $\epsilon$ denote the \L{}ojasiewicz exponent of $\mathcal{J}(\theta)$ at $\theta^*$. The convergence rates are as follows:
    \begin{itemize}
        \item[(i)] If $\epsilon\in(0,\frac{1}{2})$, then for some $C>0$ and integer $k_0$,
        \[
        \|\theta(t)-\theta^*\|_{2} \leq C\, t^{-\frac{\epsilon}{1-2\epsilon}},\ \forall t>0;\quad
        \|\theta^k-\theta^*\|_{2} \leq C\, (k\eta)^{-\frac{\epsilon}{1-2\epsilon}},\ \forall k>k_0.
        \]
        \item[(ii)] If $\epsilon = \frac{1}{2}$, then for some $C>0$ and integer $k_0$,
        \[
        \|\theta(t)-\theta^*\|_{2} \leq C\, e^{-t},\ \forall t>0;\quad
        \|\theta^k-\theta^*\|_{2} \leq C\, e^{-k\eta},\ \forall k>k_0.
        \]
    \end{itemize}
\end{proposition}

\begin{remark}
    The convergence rate deteriorates as \(\epsilon\) approaches zero. A similar phenomenon is observed in NTK-based analyses: when the minimum eigenvalue of the NTK matrix is close to zero, convergence also slows down. 
\end{remark}
\begin{remark}
Our convergence results hold for gradient descent with proper step size choices (omitted for brevity), while implicit gradient descent offers additional advantages as it maintains unconditional stability and better preserves the solution structure throughout training.
\end{remark}
\begin{remark}[Advantages of implicit regularization]
While explicit $L^2$ regularization, adding a term such as $\gamma \|\theta\|_2^2$ to the loss, ensures coercivity, modern PDE solvers like PINNs and the Deep Ritz method predominantly rely on implicit regularization induced by gradient-based optimization algorithms. 
This offers several key advantages: it naturally promotes low-norm solutions without the need for careful tuning of $\gamma$, preserves the physical interpretability of the loss, and avoids artificially restricting the solution space. 
Importantly, implicit regularization adapts robustly to multiscale features (such as sharp gradients and boundary layers) that are common in practical PDE problems. 
Extensive empirical results demonstrate that this implicit effect often yields a better trade-off between training stability and solution accuracy across a wide range of benchmarks.
\end{remark}

\section{Convergence for solving nonlinear PDEs} \label{sec:main results}

In this section, we present a rigorous coercivity analysis of the loss function for DNN-based solvers, including both PINNs and the Deep Ritz method. 
By \cref{proposition:convergence under coer}, establishing coercivity is crucial for guaranteeing the convergence when solving a broad class of PDEs, especially nonlinear ones.

\subsection{Random feature model}\label{sec:rf}
Even for two-layer neural networks~\cref{eq:nn}, the loss function for complex nonlinear PDEs can be highly intricate.  
As a first step, we focus on the random feature model~\citep{chen2023random}. 
In this setting, the network structure remains as in \cref{eq:nn}, but the inner-layer parameters \( w_k = (w_{k,1}, \ldots, w_{k,d})^\T \in \mathbb{R}^d \) and \( b_k \in \mathbb{R} \) are randomly initialized and kept fixed during training; 
only the outer-layer coefficients \( a=(a_1,\cdots, a_m)^\T \) are trainable.

In typical physics-informed learning methods, the loss function naturally admits the decomposition
$
    \mathcal{J}(a) = \mathcal{J}_{\Omega}(a) + \lambda\, \mathcal{J}_{\partial\Omega}(a),
$
where $\mathcal{J}_{\Omega}(a)$ enforces either the PDE residual (for PINNs) or the variational functional (for the Deep Ritz method) in the interior of the domain, and
$
    \mathcal{J}_{\partial\Omega}(a) = \int_{\partial\Omega} \left( \mathcal{B}(u_{\theta}) - g \right)^2 \,\mathrm{d}x
$
imposes the boundary conditions. 
Based on this decomposition, we reveal two distinct mechanisms by which $\mathcal{J}$ exhibits coercivity with respect to $a$:
\vspace{-8pt}

\paragraph{Case 1 (Boundary-induced coercivity):} Under mild assumptions, the boundary term $\mathcal{J}_{\partial\Omega}(a)$ dominates in such a way that there exists a constant $C > 0$ such that
$
    \mathcal{J}_{\partial\Omega}(a) \geq C \|a\|_2^2.
$
So the loss function $\mathcal{J}(a)$ is coercive with respect to $a$.

\vspace{-8pt}
\paragraph{Case 2 (Interior-induced coercivity):} For certain second-order nonlinear PDEs, the interior term $\mathcal{J}_{\Omega}(a)$ provides coercivity, i.e., there exists $C > 0$ such that
$
    \mathcal{J}_{\Omega}(a) \geq C \|a\|_2^2.
$

The following subsections establish precise sufficient conditions for each case, thereby covering most practical PDEs encountered in applications.

\subsection{Boundary-induced coercivity}\label{sec:boundary coercivity}
We begin by specifying our geometric assumptions on the domain $\Omega$. 
The key requirement is that the boundary $\partial \Omega$ contains a sufficiently regular portion that can be transformed into a flat segment. 
Formally, we make the following assumption:
\begin{assumption}[Local flat boundary]\label{assumption:flat}
    There exists an invertible affine transformation $\mathrm{Aff}: x \mapsto Ax + w_0$  such that the transformed domain $\tilde{\Omega}= \mathrm{Aff}(\Omega)$ satisfies:

    (i) local flatness: for some point \(y^* \in \partial\tilde\Omega\) and  \(r > 0\),
    $
        \partial \tilde\Omega \cap B(y^*, r) = \{y \in B(y^*, r) : y_d = \gamma\},
    $
    where \(B(y^*, r)\) denotes the open ball of radius \(r\) centered at \(y^*\) in \(\mathbb{R}^d\), $y_d$ is the \(d\)-th coordinate of \(y\), and \(\gamma\) is a constant.
    
    (ii) non-degeneracy: the flat boundary portion has positive (d-1)-dimensional measure, i.e.,
    $
    \lambda_{d-1}(\partial\tilde\Omega \cap B(y^*, r)) > 0.
    $
We denote \(\Gamma := \mathrm{Aff}^{-1}(\partial\tilde\Omega \cap B(y^*, r))\) as the corresponding boundary portion in the original coordinates.
\end{assumption}

\begin{remark}
    This assumption is naturally satisfied for evolutionary PDEs, where $\Gamma$ can be taken as the initial time slice $\{t = 0\}$. 
    In practical settings, local flat boundaries are common. For instance, they naturally appear in domains with piecewise smooth or polyhedral boundaries.
    Therefore, \cref{assumption:flat} introduces only a weak and broadly applicable geometric condition.
\end{remark}

For notational simplicity, we will work in coordinates where $\mathrm{Aff}$ is the identity transformation, i.e., $ A = \mathrm{Id}\in \mathbb{R}^{d\times d}, w_{0} = 0\in\mathbb{R}^{d}$. 
This does not affect the generality of our results due to the affine invariance of the coercivity property.
To establish coercivity, we first characterize a class of well-behaved neural network inner-layer parameters that guarantee desirable properties.
\begin{definition}[Admissible inner-layer parameters]\label{def:admissible}
    Denote the first \(d-1\) coordinates of \(w_k\) as \(\tilde{w}_k = (w_{k,1}, \ldots, w_{k,d-1})^\T\).
    The parameter set $\{(w_k,b_k)\}_{k=1}^{m}$ is called admissible inner-layer parameters if the following two conditions are satisfied:
    \begin{enumerate}
        \item[(i)] distinct directional components: $\tilde{w}_{i}\neq \pm \tilde{w}_{j}\ $ for any $1\le i<j\le m$;
        \item[(ii)] non-degenerate normal components: $w_{i,d}\neq 0\ $ for any $1\le i\le  m$.
    \end{enumerate}
\end{definition}

We now establish the coercivity of the loss function $\mathcal{J}(a)$ under the admissible inner-layer parameters condition, as formalized in the following result proved in~\cref{sec:proof local flat bd lin indep}.
\begin{proposition}[Boundary linear independence]\label{proposition:local flat bd lin indep}
    For admissible inner-layer parameters, the functions 
    \begin{equation*}
       \alpha \tanh(w_1^\top x + b_1)+\beta w_{1,d} \tanh'(w_1^\T x+b_1) \ ,\ldots,\ \alpha \tanh(w_m^\top x + b_m)+\beta w_{m,d} \tanh'(w_m^\T x+b_m)
    \end{equation*}
    are linearly independent in $L^{2}(\Gamma)$. 
    Furthermore, recall that $u_{\theta}=\sum_{k=1}^{m}a_k \tanh(w_k ^\T x+ b_k)$, then there exits a constant $C>0$ such that
    $
           \|a\|_{2} \leq C \left\|\alpha u_{\theta}+\beta \frac{\partial u_{\theta}}{\partial n}\right\|_{L^{2}(\Gamma)} .
    $
\end{proposition}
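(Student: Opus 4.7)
The plan is to reduce the problem on the $(d{-}1)$-dimensional flat portion $\Gamma$ to a one-dimensional statement, push Lemma~\ref{lemma:tanh} to cover the coupled $\tanh$--$\tanh'$ system, and finally convert the qualitative independence into the quantitative norm bound by dimension counting.

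First I would exploit Assumption~\ref{assumption:flat} by working in coordinates in which $\Gamma = \{x_d=\gamma\}\cap B(y_0,r)$, so that the outward normal on $\Gamma$ is $\pm e_d$. Writing $\tilde x=(x_1,\ldots,x_{d-1})$ and $\tilde b_k := w_{k,d}\gamma+b_k$, the restriction of the candidate combination to $\Gamma$ becomes $\sum_{k=1}^m a_k h_k(\tilde x)$, where $h_k(\tilde x) := \alpha\tanh(\tilde w_k^\T \tilde x+\tilde b_k) + \beta w_{k,d}\tanh'(\tilde w_k^\T \tilde x+\tilde b_k)$. Each $h_k$ is real-analytic, so any vanishing relation on the open disk inside $\Gamma$ persists along every line segment. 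I would therefore fix a base point $\tilde x_0$ in the disk and a direction $v\in\mathbb{R}^{d-1}$, and restrict to $\tilde x(t)=\tilde x_0+tv$, producing 1D functions $\alpha\tanh(p_k t+q_k)+\beta w_{k,d}\tanh'(p_k t+q_k)$ with $p_k := \tilde w_k^\T v$ and $q_k := \tilde w_k^\T \tilde x_0+\tilde b_k$. Since $\tilde w_i\neq\pm\tilde w_j$ for $i\neq j$ by admissibility, the exceptional sets $\{v:(\tilde w_i\mp\tilde w_j)^\T v=0\}$ and $\{v:\tilde w_k^\T v=0\}$ are proper linear subspaces of $\mathbb{R}^{d-1}$, so a Lebesgue-generic $v$ satisfies $p_k\neq 0$ and $p_i\neq\pm p_j$, matching the hypotheses of Lemma~\ref{lemma:tanh}.

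The main obstacle is that Lemma~\ref{lemma:tanh} uses one common $(\alpha,\beta)$ pair for every summand, whereas here the coefficient of $\tanh'$ carries the $k$-dependent factor $w_{k,d}$. I would resolve this by strengthening the conclusion to the linear independence of the $2m$ functions $\{\tanh(p_k t+q_k),\tanh'(p_k t+q_k)\}_{k=1}^m$, from which the desired statement follows by a rank argument on the $2m\times m$ coefficient matrix (diagonal $\alpha$-block stacked over diagonal $\beta w_{k,d}$-block, which has rank $m$ whenever $(\alpha,\beta)\neq(0,0)$ and $w_{k,d}\neq 0$). The strengthening itself is natural via analytic continuation: each $\tanh(p_k z+q_k)$ extends meromorphically to $\mathbb{C}$ with a simple pole at $z_k := (i\pi/2-q_k)/p_k$, while $\tanh'(p_k z+q_k)$ has a double pole at the same location. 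Perturbing $\tilde x_0$ generically further guarantees that the poles across different $k$ are pairwise disjoint. Reading the Laurent coefficients of $(z-z_k)^{-2}$ and $(z-z_k)^{-1}$ in a vanishing combination simultaneously forces $a_k\beta w_{k,d}=0$ and $a_k\alpha=0$; since $(\alpha,\beta)\neq(0,0)$ and $w_{k,d}\neq 0$ by admissibility, this yields $a_k=0$ for every $k$, so $\{h_k\}_{k=1}^m$ are linearly independent in $L^2(\Gamma)$.

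The quantitative bound then follows from finite-dimensional linear algebra: the span of the linearly independent $\{h_k\}_{k=1}^m$ is an $m$-dimensional subspace of $L^2(\Gamma)$ on which the $L^2$-norm and the Euclidean norm of the coefficient vector $a$ are equivalent. This produces a constant $C>0$, depending only on $\Gamma$, on the fixed admissible parameters $\{(w_k,b_k)\}$, and on $(\alpha,\beta)$, such that $\|a\|_2\le C\,\bigl\|\sum_k a_k h_k\bigr\|_{L^2(\Gamma)} = C\,\bigl\|\alpha u_\theta+\beta\,\partial u_\theta/\partial n\bigr\|_{L^2(\Gamma)}$, completing the proof.
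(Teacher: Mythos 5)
Your reduction to a line in $\Gamma$, the choice of a generic direction $v$ with $p_i:=\tilde w_i^\T v$ satisfying $p_i\neq\pm p_j$, and the final Gram-matrix/norm-equivalence step all mirror the paper's proof. You also correctly spot a genuine subtlety the paper glosses over: \cref{lemma:tanh} is stated for a single $(\alpha,\beta)$ common to all summands, whereas the restricted boundary operator produces the $k$-dependent weight $\beta w_{k,d}$ on $\tanh'$, so the lemma does not literally apply. Your proposed fix, proving the stronger statement that the $2m$ functions $\{\tanh(p_k t+q_k),\tanh'(p_k t+q_k)\}_{k=1}^m$ are independent and then using a rank argument, is a cleaner route to the same end and, if established, handles the varying coefficients for free.

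However, the pole-counting step as you wrote it has a gap. You claim that after perturbing $\tilde x_0$ generically the poles across different $k$ are pairwise disjoint, and you then read off Laurent coefficients at $z_k=(i\pi/2-q_k)/p_k$ as if only the $k$-th term were singular there. Distinctness of the designated poles $z_1,\dots,z_m$ is indeed automatic (they have distinct imaginary parts since $p_i\neq p_j$), but that is not the property you need: what you need is that $\tanh(p_jz+q_j)$ has \emph{no} pole at $z_k$, for every $j\neq k$. That fails precisely when $p_j/p_k$ is an odd integer and $q_j/p_j=q_k/p_k$. If $\tilde w_j=(2n{+}1)\tilde w_k$ (which is perfectly admissible, since $\tilde w_j\neq\pm\tilde w_k$ only forbids $n\in\{0,-1\}$), then $p_j=(2n{+}1)p_k$ for \emph{every} direction $v$, and $q_j-(2n{+}1)q_k=\tilde b_j-(2n{+}1)\tilde b_k$ is independent of $\tilde x_0$, so no perturbation of $\tilde x_0$ can break the alignment when $\tilde b_j=(2n{+}1)\tilde b_k$. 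In that case the $j$-th term contributes to the Laurent expansion at $z_k$, and reading coefficients gives only a coupled linear relation between $(a_k,b_k)$ and $(a_j,b_j)$, not $a_k=b_k=0$. The claim you want is still true—because any two pole lattices $\{\pi(n+\frac12)/p_k\}$ and $\{\pi(n+\frac12)/p_j\}$ intersect in a strictly coarser sublattice of the denser one, so one can always select, for each index, a pole unshared with all the others and peel off coefficients inductively—but that ordering/density argument is precisely what your sketch omits. By contrast, the paper's exponential-expansion argument in \cref{lemma:tanh} isolates the lowest-exponent term $e^{2(p_1t+q_1)}$ directly and never encounters this collision issue; to fully close the gap there one only needs the small extension of \cref{lemma:tanh} to $k$-dependent $\tanh'$ coefficients, which the same expansion handles.
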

The significance of this result lies at the core of our analysis. 
By astutely exploiting the linear independence of these functions, we are able to rigorously bound $\|a\|_2$ using the boundary data. 
Based on this estimate, we are able to establish the following convergence theorem.

\vspace{-.5em}
\paragraph{Random Initialization} Inner-layer parameters $\{(w_k,b_k)\}_{k=1}^{m}$ are randomly initialized according to the following rule:
 $w_{i}\sim \mathcal{N}(0,\mathbf{I}_d)$ i.i.d. ; $b_{i}\sim \mathcal{N}(0,1)$ i.i.d. for $1\le i\le m$.
 
\begin{theorem}[Almost sure convergence via admissible initialization]\label{theorem:convergence under flat}
Under \cref{assumption:flat}, regardless of the specific form of the differential operator $\mathcal{L}$ in the PDE, we can initialize the inner parameters $\{(w_k,b_k)\}_{k=1}^{m}$ with probability 1 such that 
(i)  $\mathcal{J}$ is coercive with respect to $a$; and 
(ii) all convergence results of \cref{proposition:convergence under coer} hold. 
\end{theorem}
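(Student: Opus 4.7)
The plan is to combine three ingredients already established earlier in the paper: the random initialization rule, the boundary linear independence result (\cref{proposition:local flat bd lin indep}), and the general convergence principle under coercivity (\cref{proposition:convergence under coer}). The strategy is to first verify the admissibility of the random initialization almost surely, then promote the linear-independence conclusion into a coercivity statement for $\mathcal{J}$, and finally quote \cref{proposition:convergence under coer}.

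The first step is to show that the admissibility conditions in \cref{def:admissible} hold with probability one under the prescribed Gaussian initialization. For condition (ii), each event $\{w_{i,d} = 0\}$ has probability zero since $w_{i,d} \sim \mathcal{N}(0,1)$, and there are only finitely many such events, so a union bound yields $\mathbb{P}(\exists i:\ w_{i,d}=0) = 0$. For condition (i), each pairwise event $\{\tilde{w}_i = \tilde{w}_j\}$ or $\{\tilde{w}_i = -\tilde{w}_j\}$ lies on a proper affine subspace of $\mathbb{R}^{d-1}\times \mathbb{R}^{d-1}$ and hence has probability zero under the absolutely continuous joint Gaussian law; again a finite union bound over the $\binom{m}{2}$ pairs gives probability zero of failure. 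Hence, with probability one the initialized parameters are admissible, so \cref{proposition:local flat bd lin indep} applies.

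Next, I would translate the $L^2(\Gamma)$ bound of \cref{proposition:local flat bd lin indep} into coercivity of $\mathcal{J}$ in $a$. Since $\Gamma \subseteq \partial\Omega$, the triangle inequality gives
\begin{equation*}
\left\|\alpha u_\theta + \beta \tfrac{\partial u_\theta}{\partial n}\right\|_{L^2(\Gamma)} \leq \left\|\mathcal{B}u_\theta - g\right\|_{L^2(\partial\Omega)} + \|g\|_{L^2(\Gamma)} = \sqrt{\mathcal{J}_{\partial\Omega}(a)} + \|g\|_{L^2(\Gamma)},
\end{equation*}
so \cref{proposition:local flat bd lin indep} yields $\|a\|_2 \leq C(\sqrt{\mathcal{J}_{\partial\Omega}(a)} + \|g\|_{L^2(\Gamma)})$. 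Rearranging, whenever $\|a\|_2 \to \infty$ we must have $\mathcal{J}_{\partial\Omega}(a)\to\infty$. Because $\mathcal{J}_\Omega$ is bounded below (the residual term is nonnegative, and for the Deep Ritz variational term one works under the standard assumption that $\mathcal{E}$ is bounded below on the random-feature hypothesis class), $\mathcal{J}(a) = \mathcal{J}_\Omega(a) + \lambda \mathcal{J}_{\partial\Omega}(a)\to\infty$ as well. This is exactly \cref{def:coercive}, so part (i) follows.

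Part (ii) is then immediate: with $a$ the only trainable variable, $\mathcal{J}(a)$ is a real-analytic and coercive function of $a$ (real-analyticity follows from analyticity of $\tanh$ and polynomial operations in the loss), so all hypotheses of \cref{proposition:convergence under coer} are satisfied, and we inherit gradient-flow and IGD convergence to a critical point along with the rate statements in terms of the \L{}ojasiewicz exponent. The main conceptual obstacle is the middle step, namely turning a linear-independence fact on $\Gamma$ into a genuine coercivity estimate with respect to the whole loss: the key subtlety is handling the inhomogeneity $g$ via the triangle inequality and ensuring the interior part $\mathcal{J}_\Omega$ does not work against coercivity; the remaining measure-theoretic and quotation steps are routine.
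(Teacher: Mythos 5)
Your proof is correct and follows the same route the paper sketches: verify admissibility of the Gaussian initialization almost surely, invoke \cref{proposition:local flat bd lin indep} to control $\|a\|_2$ by the boundary term, conclude coercivity, and then apply \cref{proposition:convergence under coer}. The paper itself gives only a one-line justification before the theorem and is in fact slightly imprecise there: it writes $\mathcal{J}_{\partial\Omega}=\bigl\|\alpha u_\theta+\beta\,\partial u_\theta/\partial n\bigr\|_{L^2(\Gamma)}^2$, which drops the boundary data $g$ and conflates $\Gamma$ with $\partial\Omega$. Your triangle-inequality step
\begin{equation*}
\left\|\alpha u_\theta+\beta\,\tfrac{\partial u_\theta}{\partial n}\right\|_{L^2(\Gamma)} \le \sqrt{\mathcal{J}_{\partial\Omega}(a)}+\|g\|_{L^2(\Gamma)}
\end{equation*}
is exactly the fix needed to turn the proposition's estimate into genuine coercivity, so you have filled a real (if minor) gap rather than just restating the paper. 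Your measure-zero argument for almost-sure admissibility is also the correct elaboration of what the paper asserts without proof. The one place to be careful is the remark about $\mathcal{J}_\Omega$ being bounded below: for PINNs this is automatic since the residual term is a square, but for Deep Ritz, "$\mathcal{E}$ bounded below on the hypothesis class" is indeed a needed hypothesis; the theorem as stated claims independence from the operator $\mathcal{L}$, which is literally true only for the PINN loss, and your remark surfaces a caveat the paper glosses over. A slightly sharper Deep Ritz argument would note that the boundary penalty grows quadratically in $\|a\|_2$ while the dangerous linear-in-$u$ source term in $\mathcal{E}$ grows only linearly, so the sum is still coercive even without assuming $\mathcal{E}$ bounded below.
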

\begin{proof}
By construction, for randomly initialized inner parameters, the admissibility condition of \cref{def:admissible} is satisfied almost surely. 
Then, for almost surely inner parameters, by \cref{proposition:local flat bd lin indep}, there exists $C > 0$ such that
$
    \|a\|_2 \leq C \left\| \alpha u_{\theta} + \beta \frac{\partial u_{\theta}}{\partial n} \right\|_{L^2(\Gamma)},
$
where $u_{\theta} = \sum_{k=1}^m a_k \tanh(w_k^\top x + b_k)$.
This directly implies that $\mathcal{J}(a)\to +\infty$ as $\|a\|\to +\infty$, i.e., $\mathcal{J}$ is coercive with respect to $a$.
Consequently, by invoking \cref{proposition:convergence under coer}, we conclude that, for almost every realization of the inner parameters, the convergence results therein hold. This completes the proof.
\end{proof}

This theorem ensures that random initialization almost surely yields admissible parameters satisfying the coercivity condition. 
As a result, the convergence results apply generically to both PINNs and the Deep Ritz method, regardless of the choice of differential operator in the PDE.
A similar statement for the empirical loss is discussed in \cref{sec:empirical loss}.

\begin{remark}
Extensions to deeper networks and the challenges arising when inner-layer parameters are also trainable are discussed in \cref{sec:deeper nonlinear}. 
Further discussion of the possibilities and difficulties of extending these results to SGD appears in \cref{sec:sgd nonlinear}.
\end{remark}

\subsection{Interior-induced coercivity for specific PDEs}\label{sec:interior coercivity}
We now discuss in detail the coercivity of the interior loss introduced in Case 2 above; analogous results for Deep Ritz solvers are given in \cref{sec:Deep Ritz interior}.
Consider the following prototypical nonlinear operators with homogeneous Dirichlet conditions ($u|_{\partial\Omega}=0$):
\begin{equation}
\begin{aligned}
(i) &\quad -\text{div}(|\nabla u|^{p-2}\nabla u) + q(x)u + h(u), \quad p\geq 2,\ q\geq 0,\ h(u)u\geq 0; \\
(ii) &\quad -\text{div}((1+u^2)\nabla u) + q(x)u + h(u), \quad q\geq 0,\ h(u)u\geq 0. \label{eq:special PDE}
\end{aligned}
\end{equation}

To strictly enforce homogeneous Dirichlet conditions, we multiply the neural network by a cutoff function $\varphi(x)$ that vanishes on $\partial\Omega$. 
Let $\varphi(x)$ be a smooth function such that $0\le\varphi(x)\le 1$ on $\Omega$ , $\varphi(x)=0$ on $\partial\Omega$ and $\varphi(x)\equiv 1$ on some open set $U\subset \Omega$.
The modified ansatz $\tilde{u}_{\theta}(x):=\varphi(x)u_{\theta}(x)$ automatically satisfies the boundary conditions, allowing us to focus on learning the interior. 

\begin{proposition}[Interior $L^2$ control]\label{proposition:interior L2 control under PINN}  
For operators in \cref{eq:special PDE}, there exists $C>0$ such that
\begin{equation*}
\|u\|_{L^2(U)} \leq C(\|\mathcal{L}\tilde u-f\|_{L^2(\Omega)} + \|f\|_{L^2(\Omega)}).
\end{equation*}
\end{proposition}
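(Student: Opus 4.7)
The plan is to test the residual identity $\mathcal{L}\tilde u = f + r$, where $r := \mathcal{L}\tilde u - f$, against the function $\tilde u$ itself. This test function is admissible because the cutoff $\eta$ forces $\tilde u \in W_0^{1,p}(\Omega)$, so boundary contributions vanish upon integration by parts. The strategy is then to exploit the monotone/coercive structure of every summand of $\mathcal{L}$ to bound $\int_{\Omega}\mathcal{L}\tilde u\cdot\tilde u\,\d x$ from below by $\|\nabla\tilde u\|_{L^{p}(\Omega)}^{p}$ (respectively $\|\nabla\tilde u\|_{L^{2}(\Omega)}^{2}$), and then convert this energy estimate into an $L^{2}$ bound via Poincar\'e's inequality. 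Because $\eta\equiv 1$ on $U$, we have $u_{\theta}=\tilde u$ there, so any bound on $\|\tilde u\|_{L^{2}(\Omega)}$ transfers to $\|u_{\theta}\|_{L^{2}(U)}$ for free.

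Concretely, for operator (i), integration by parts on the leading term yields $\int_{\Omega}\bigl(-\text{div}(|\nabla\tilde u|^{p-2}\nabla\tilde u)\bigr)\tilde u\,\d x = \int_{\Omega}|\nabla\tilde u|^{p}\,\d x$, while the lower-order contributions satisfy $\int_{\Omega} q\,\tilde u^{2}\,\d x\geq 0$ by $q\geq 0$ and $\int_{\Omega} h(\tilde u)\tilde u\,\d x\geq 0$ by the sign hypothesis on $h$. For operator (ii), the identical testing produces $\int_{\Omega}(1+\tilde u^{2})|\nabla\tilde u|^{2}\,\d x\geq\int_{\Omega}|\nabla\tilde u|^{2}\,\d x$, with the remaining terms again nonnegative. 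Pairing these lower bounds with the Cauchy--Schwarz estimate $\int_{\Omega}(f+r)\tilde u\,\d x \leq \bigl(\|f\|_{L^{2}(\Omega)}+\|r\|_{L^{2}(\Omega)}\bigr)\|\tilde u\|_{L^{2}(\Omega)}$ gives the key energy inequality. Applying Poincar\'e's inequality in $W_0^{1,p}(\Omega)$, namely $\|\tilde u\|_{L^{p}(\Omega)}\leq C\|\nabla\tilde u\|_{L^{p}(\Omega)}$, together with the continuous embedding $L^{p}(\Omega)\hookrightarrow L^{2}(\Omega)$ on the bounded domain $\Omega$ for $p\geq 2$, closes the loop.

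In the case $p=2$, covering operator (ii) and the quadratic instance of operator (i), the chain becomes $\|\tilde u\|_{L^{2}(\Omega)}^{2}\leq C\bigl(\|f\|_{L^{2}}+\|r\|_{L^{2}}\bigr)\|\tilde u\|_{L^{2}(\Omega)}$, so $\|\tilde u\|_{L^{2}(\Omega)}\leq C\bigl(\|f\|_{L^{2}(\Omega)}+\|\mathcal{L}\tilde u-f\|_{L^{2}(\Omega)}\bigr)$, and the stated bound follows after restricting to $U$. The main obstacle is the genuine $p$-Laplacian case $p>2$: the estimate derived above only produces $\|\nabla\tilde u\|_{L^{p}(\Omega)}^{p-1}\leq C\bigl(\|f\|_{L^{2}}+\|r\|_{L^{2}}\bigr)$, which is sublinear after taking $(p-1)$-th roots, yielding $\|u_{\theta}\|_{L^{2}(U)}^{p-1}\leq C\bigl(\|f\|_{L^{2}(\Omega)}+\|\mathcal{L}\tilde u-f\|_{L^{2}(\Omega)}\bigr)$. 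To recover the linear form in the proposition one either absorbs an a priori bound on $\|r\|_{L^{2}}$ (the loss $\mathcal{J}$ is monotone nonincreasing along gradient flow, so $\|r\|_{L^{2}}$ stays bounded by its initial value and the excess factor can be folded into $C$), or reads the estimate in the nonlinear form above. Both formulations are sufficient for the downstream coercivity argument, since blow-up of $\|u_{\theta}\|_{L^{2}(U)}$ (and hence of $\|a\|_{2}$, after invoking an analogue of \cref{proposition:local flat bd lin indep} on the interior open set $U$) still forces blow-up of the interior loss $\mathcal{J}_{\Omega}$, which is the only consequence needed to feed into \cref{proposition:convergence under coer}.
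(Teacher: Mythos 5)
Your argument is essentially the paper's own: test $\mathcal{L}\tilde u$ against $\tilde u$ in $L^{2}(\Omega)$, integrate by parts (admissible since the cutoff forces $\tilde u\in W_{0}^{1,p}(\Omega)$), drop the nonnegative lower-order contributions $q\tilde u^{2}$ and $h(\tilde u)\tilde u$, and pair the resulting gradient lower bound against Cauchy--Schwarz. The difference is that you track the exponents carefully, and in doing so you have correctly flagged a genuine gap in the paper's proof. The paper asserts
\begin{equation*}
\int_{\Omega}|\nabla\tilde u|^{p}\,\d x \geq C\,\|\tilde u\|_{L^{2}(\Omega)}^{2}\qquad\text{(``by the Sobolev inequality''),}
\end{equation*}
but Poincar\'e together with H\"older on the bounded domain only yield $\int_{\Omega}|\nabla\tilde u|^{p}\,\d x\gtrsim\|\tilde u\|_{L^{2}(\Omega)}^{p}$ for $p\geq 2$. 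For $p>2$ this degrades the conclusion to the sublinear estimate $\|\tilde u\|_{L^{2}(\Omega)}^{p-1}\lesssim\|\mathcal{L}\tilde u-f\|_{L^{2}(\Omega)}+\|f\|_{L^{2}(\Omega)}$, exactly as you derive; the linear bound in the proposition's statement does not follow as written. Your observation that the sublinear form is still sufficient for the downstream use in \cref{theorem:convergence for PINN} is correct and is the right way to repair the statement: coercivity of $\mathcal{J}_{\Omega}$ in $a$ only requires that $\|u_{\theta}\|_{L^{2}(U)}\to\infty$ forces $\|\mathcal{L}\tilde u-f\|_{L^{2}(\Omega)}\to\infty$, which any strictly increasing function of the residual norm delivers. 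By contrast, your first remedy --- absorbing a bound on $\|r\|_{L^{2}}$ via monotonicity of the loss along the flow --- is circular in this context, because coercivity is exactly what \cref{proposition:convergence under coer} uses to conclude the trajectory is bounded in the first place; keep the nonlinear form of the estimate instead.
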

The proof of this proposition is provided in \cref{sec:proof interior L2 control under PINN}.
This stability estimate directly enables coercivity through interior terms alone, complementing our boundary-based results.
By applying the same techniques as in the previous section,
we can conclude the following theorem.
\begin{theorem}[Almost sure convergence for PINNs]\label{theorem:convergence for PINN}
Using PINNs to solve $\mathcal{L}u=f$ with homogeneous Dirichlet boundary condition, where $\mathcal{L}$ is defined as in \cref{eq:special PDE}, we can initialize the inner parameters $\{(w_k,b_k)\}_{k=1}^{m}$ with probability 1 such that 
        
        (i) $w_{i}\neq \pm w_{j}\ $ for $1\le i<j\le m$, then $\{\tanh(w_k^\T x+b_k)\}_{k=1}^{m}$ are linearly independent in $L^{2}(U)$;
        
        (ii) the loss function $\mathcal{J}_{\text{PINN}}$ defined in \cref{eq:PINN loss} is coercive about $a$;
        
        (iii) all convergence results of \cref{proposition:convergence under coer} hold.
\end{theorem}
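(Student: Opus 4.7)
The plan is to reduce the theorem to the two-step template already used for the boundary-induced case in \cref{theorem:convergence under flat}: (a) show that random initialization almost surely produces inner parameters for which the features $\{\tanh(w_k^{\T}x+b_k)\}$ are linearly independent on the open set $U$, and (b) use this linear independence together with \cref{proposition:interior L2 control under PINN} to get a bound of the form $\|a\|_2 \lesssim \sqrt{\mathcal{J}_\Omega(a)} + \|f\|_{L^2(\Omega)}$, which immediately gives coercivity and so lets us invoke \cref{proposition:convergence under coer}.

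For claim (i), since $w_i,w_j \sim \mathcal{N}(0,\mathrm{Id})$ are i.i.d., the event $\{w_i = \pm w_j\}$ has probability zero for each fixed pair, and a finite union of null events is null; hence almost surely $w_i \neq \pm w_j$ for all $i<j$. To upgrade this to linear independence in $L^2(U)$, I would pick $x_0 \in U$ and find a direction $v \in \mathbb{R}^d$ such that $w_i^{\T}v \neq \pm w_j^{\T}v$ for all $i\neq j$. The "bad" set is the finite union of hyperplanes $\{v : (w_i \mp w_j)^{\T}v = 0\}$, each of Lebesgue measure zero since $w_i \neq \pm w_j$, so a good $v$ exists and even generic $v$ works. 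Suppose $\sum_k a_k \tanh(w_k^{\T}x+b_k) \equiv 0$ on $U$. Restrict to the segment $x(t) = x_0 + tv \subset U$ for small $|t|$: this reduces the identity to $\sum_k a_k \tanh(p_k t + q_k) \equiv 0$ with $p_k := w_k^{\T}v$ pairwise distinct up to sign and $q_k := w_k^{\T}x_0 + b_k$. By analyticity, the identity extends to $t \in \mathbb{R}$, and \cref{lemma:tanh} (with $\alpha=1,\beta=0$) forces $a_k = 0$ for all $k$.

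For claim (ii), the linear independence just established means that $a \mapsto u_\theta|_U$ is a linear injection from $\mathbb{R}^m$ into a finite-dimensional subspace of $L^2(U)$; since all norms on a finite-dimensional space are equivalent, there is $c>0$ with $\|u_\theta\|_{L^2(U)} \geq c\|a\|_2$. Because $\eta \equiv 1$ on $U$, we have $\tilde u_\theta = u_\theta$ on $U$, so $\|\tilde u_\theta\|_{L^2(U)} = \|u_\theta\|_{L^2(U)}$. Plugging this into \cref{proposition:interior L2 control under PINN} applied to $u = \tilde u_\theta$ gives
\begin{equation*}
c\|a\|_2 \;\leq\; \|\tilde u_\theta\|_{L^2(U)} \;\leq\; C\bigl(\|\mathcal{L}\tilde u_\theta - f\|_{L^2(\Omega)} + \|f\|_{L^2(\Omega)}\bigr) \;=\; C\bigl(\sqrt{\mathcal{J}_\Omega(a)} + \|f\|_{L^2(\Omega)}\bigr).
\end{equation*}
Hence $\mathcal{J}_{\text{PINN}}(a) \geq \mathcal{J}_\Omega(a) \to \infty$ as $\|a\|_2 \to \infty$, which is coercivity in $a$.

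For claim (iii), once the loss is coercive and real analytic in $a$ (analyticity follows from the analyticity of $\tanh$, $\eta$, and the polynomial/analytic structure of the operators in \cref{eq:special PDE}), \cref{proposition:convergence under coer} applies directly to yield convergence of both gradient flow and IGD trajectories to critical points, together with the stated rates. The main obstacle is the linear-independence step: the naive use of \cref{lemma:tanh} is one-dimensional, and transferring it to an open set in $\mathbb{R}^d$ requires the measure-theoretic choice of a generic direction $v$ plus analytic continuation from a short segment in $U$ to the full real line. Everything else is essentially bookkeeping on top of \cref{proposition:interior L2 control under PINN} and \cref{proposition:convergence under coer}.
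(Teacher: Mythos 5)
Your proposal is correct and follows essentially the same route as the paper's (brief) argument: $w_i \neq \pm w_j$ almost surely, then linear independence of the features on $L^2(U)$ via the same generic-direction/analytic-continuation reduction to \cref{lemma:tanh} that the paper uses in the proof of \cref{proposition:local flat bd lin indep}, then coercivity from \cref{proposition:interior L2 control under PINN} (your finite-dimensional norm-equivalence step is just the Gram-matrix argument in different clothing), and finally \cref{proposition:convergence under coer}. The one point worth being slightly more explicit about is that the generic direction $v$ should also avoid the finitely many hyperplanes $\{v : w_k^\T v = 0\}$ so that all $p_k = w_k^\T v$ are nonzero, since the proof of \cref{lemma:tanh} normalizes the $p_k$ to be positive; this is still a measure-zero exclusion and does not change the argument.
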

\begin{proof}
Fix any choice of inner-layer parameters $\{(w_k, b_k)\}_{k=1}^m$. By \cref{proposition:interior L2 control under PINN}, we have
\[
    \|u_{\theta}\|_{L^2(U)} \leq C \big( \|\mathcal{L}\tilde u - f\|_{L^2(\Omega)} + \|f\|_{L^2(\Omega)} \big) \leq C\big( \mathcal{J}(a) + \|f\|_{L^2(\Omega)} \big).
\]
According to \cref{lemma:tanh}, the functions $\{\tanh(w_k^\top x + b_k)\}_{k=1}^m$ are linearly independent in $L^2(U)$ provided $w_i \neq \pm w_j$ for all $i \neq j$. Standard linear algebra then yields
\[
    \|a\|_2 \leq C \|u_{\theta}\|_{L^2(U)} \leq C\big( \mathcal{J}(a) + \|f\|_{L^2(\Omega)} \big).
\]
Since the set of parameters $\{(w_k, b_k)\}_{k=1}^m$ with $w_i \neq \pm w_j$ for all $i \neq j$ has full measure under random initialization, this estimate holds almost surely. 
Thus, the loss $\mathcal{J}_{\mathrm{PINN}}$ is coercive with respect to $a$ for almost every initialization. The convergence result of \cref{proposition:convergence under coer} then follows.
\end{proof}

\paragraph{Implicit regularization of random feature model.}
Establishing coercivity is crucial for our analysis. 
The direct application of \L{}ojasiewicz inequality implies that optimization dynamics for all trainable parameters will either converge to a critical point or diverge to infinity. 
Coercivity rules out the latter by guaranteeing boundedness of the parameter sequence. 
Importantly, even in the presence of highly complex loss landscapes as encountered in PINNs or Deep Ritz frameworks, our results establish that, under the assumptions of \cref{theorem:convergence under flat} or \cref{theorem:convergence for PINN}, the random feature model provides inherent implicit regularization: both gradient flow and implicit gradient descent dynamics remain constrained within a bounded region, precluding divergence of the parameters or their gradients. 
Thus, no additional regularization technique is needed to prevent parameter or gradient explosion when using random feature models, even in these challenging settings.
These results underscore the robustness of the random feature model in maintaining well-behaved optimization trajectories solely due to its intrinsic structural properties under mild conditions.

\section{Numerical experiments} \label{sec:experiments}
As discussed earlier, time-dependent PDEs naturally satisfy \cref{assumption:flat}.  
To validate \cref{theorem:convergence under flat}, we test three representative time-dependent equations:  the Burgers', Allen--Cahn, and Fisher--KPP equations.
Detailed results for the Allen--Cahn and Fisher--KPP equations are given in \cref{sec:allen-cahn} and \cref{sec:fisher-kpp}, respectively.  
Here, we focus on the convergence behavior of the random feature model within the PINN framework for the Burgers’ equation~\cref{eq:Bur eq}.  
A comprehensive summary of experimental hyperparameters is provided in \cref{sec:setup burgers}.
Notably, our results do not rely on network over-parameterization.  
We train a network with $m = 100$ hidden units using implicit gradient descent (IGD) with a step size of $0.5$ for sufficient iterations, employing $10,000$ interior collocation points and $100$ boundary points.  
The final $\ell_2$-norm of the loss gradient is $1.13 \times 10^{-3}$, confirming convergence to a critical point even with a comparatively large step size.

Next, we consider the second equation in \cref{eq:special PDE}, which is also solved using the PINN framework with the random feature model, trained by IGD for sufficient iterations (see \cref{sec:ex_interior} for further details).
\cref{tab:interior_loss_grad} summarizes the $\ell_2$-norm of the loss gradient with respect to the model parameters after training from different random initializations.
\begin{table}[h]
\caption{Norm of the loss gradient with respect to $a$ after training from different initializations.}
\label{tab:interior_loss_grad}
\begin{center}
\begin{tabular}{lc}
\multicolumn{1}{c}{\bf Initialization} & \multicolumn{1}{c}{$\|\nabla \mathcal{J}(a)\|_2$} \\
\hline \\
Xavier normal for $w_k$, Xavier uniform for $a_k$ & $2.44 \times 10^{-4}$ \\
Standard normal for $w_k$, uniform $[-1,1]$ for $a_k$ & $1.45 \times 10^{-4}$ \\
LeCun normal initialization for both $w_k$ and $a_k$& $1.37 \times 10^{-4}$ \\
\end{tabular}
\end{center}
\end{table}

These results consistently demonstrate a small loss gradient norm, further supporting convergence to a critical point as established in \cref{theorem:convergence for PINN}.

\section{Conclusion} \label{sec:conclusion}
In this paper, we develop a unified convergence analysis for neural network-based PDE solvers, encompassing both linear and nonlinear equations. 
Leveraging the neural tangent kernel framework and the \L{}ojasiewicz inequality within the random feature model, we establish rigorous convergence guarantees and highlight the intrinsic implicit regularization effect of the random feature approach. 
Our theoretical results show that both gradient flow and implicit gradient descent can achieve reliable convergence under mild conditions, even for nonlinear problems.
While our current analysis for nonlinear PDEs focuses on random feature models, future work will seek to extend these results to fully-trainable architectures under suitable assumptions. 
We also intend to investigate optimization dynamics near saddle points and clarify the conditions distinguishing convergence to local versus global minima. 
Pursuing these directions will further strengthen the theoretical foundations and enhance the practical reliability of neural network-based PDE solvers.

\bibliographystyle{unsrt}
\bibliography{ref}

\begin{thebibliography}{10}

\bibitem{evans2022partial}
Lawrence~C Evans.
\newblock {\em Partial differential equations}, volume~19.
\newblock American Mathematical Society, 2022.

\bibitem{johnson2009numerical}
Claes Johnson.
\newblock {\em Numerical solution of partial differential equations by the finite element method}.
\newblock Courier Corporation, 2009.

\bibitem{raissi2019physics}
Maziar Raissi, Paris Perdikaris, and George~E Karniadakis.
\newblock Physics-informed neural networks: A deep learning framework for solving forward and inverse problems involving nonlinear partial differential equations.
\newblock {\em Journal of Computational Physics}, 378:686--707, 2019.

\bibitem{weinan2018deep}
Weinan E and Bing Yu.
\newblock The {Deep Ritz} method: A deep learning-based numerical algorithm for solving variational problems.
\newblock {\em Communications in Mathematics and Statistics}, 6(1):1--12, 2018.

\bibitem{lawal2022physics}
Zaharaddeen~Karami Lawal, Hayati Yassin, Daphne Teck~Ching Lai, and Azam Che~Idris.
\newblock Physics-informed neural network {(PINN)} evolution and beyond: A systematic literature review and bibliometric analysis.
\newblock {\em Big Data and Cognitive Computing}, 6(4):140, 2022.

\bibitem{karniadakis2021physics}
George~Em Karniadakis, Ioannis~G Kevrekidis, Lu~Lu, Paris Perdikaris, Sifan Wang, and Liu Yang.
\newblock Physics-informed machine learning.
\newblock {\em Nature Reviews Physics}, 3(6):422--440, 2021.

\bibitem{ming2021deep}
Yulei Liao and Pingbing Ming.
\newblock {Deep Nitsche Method}: {Deep Ritz} method with essential boundary conditions.
\newblock {\em Communications in Computational Physics}, 29(5):1365--1384, 2021.

\bibitem{liu2023deep}
Min Liu, Zhiqiang Cai, and Karthik Ramani.
\newblock {Deep Ritz} method with adaptive quadrature for linear elasticity.
\newblock {\em Computer Methods in Applied Mechanics and Engineering}, 415:116229, 2023.

\bibitem{NTK}
Arthur Jacot, Franck Gabriel, and Cl{\'e}ment Hongler.
\newblock Neural tangent kernel: Convergence and generalization in neural networks.
\newblock {\em Advances in Neural Information Processing Systems}, 31, 2018.

\bibitem{li2020learning}
Yuanzhi Li, Tengyu Ma, and Hongyang~R. Zhang.
\newblock Learning over-parametrized two-layer neural networks beyond {NTK}.
\newblock In {\em Conference on Learning Theory}, pages 2613--2682. PMLR, 2020.

\bibitem{GD_cong_PINN}
Yihang Gao, Yiqi Gu, and Michael Ng.
\newblock Gradient descent finds the global optima of two-layer physics-informed neural networks.
\newblock In {\em International Conference on Machine Learning}, pages 10676--10707. PMLR, 2023.

\bibitem{xu2024convergence}
Xianliang Xu, Ting Du, Wang Kong, Ye~Li, and Zhongyi Huang.
\newblock Convergence of implicit gradient descent for training two-layer physics-informed neural networks.
\newblock {\em arXiv preprint arXiv:2407.02827}, 2024.

\bibitem{xu2024convergence1}
Xianliang Xu, Ting Du, Wang Kong, Ye~Li, and Zhongyi Huang.
\newblock Convergence analysis of natural gradient descent for over-parameterized physics-informed neural networks.
\newblock {\em arXiv preprint arXiv:2408.00573}, 2024.

\bibitem{duan2022convergence}
Chenguang Duan, Yuling Jiao, Yanming Lai, Dingwei Li, Jerry~Zhijian Yang, et~al.
\newblock Convergence rate analysis for {Deep Ritz} method.
\newblock {\em Communications in Computational Physics}, 31(4):1020--1048, 2022.

\bibitem{jiao2024error}
Yuling Jiao, Yanming Lai, and Yang Wang.
\newblock Error analysis of three-layer neural network trained with {PGD} for deep {Ritz} method.
\newblock {\em arXiv preprint arXiv:2405.11451}, 2024.

\bibitem{lu2021priori}
Yulong Lu, Jianfeng Lu, and Min Wang.
\newblock A priori generalization analysis of the deep {Ritz} method for solving high dimensional elliptic partial differential equations.
\newblock In {\em Conference on Learning Theory}, pages 3196--3241. PMLR, 2021.

\bibitem{bonfanti2024challenges}
Andrea Bonfanti, Giuseppe Bruno, and Cristina Cipriani.
\newblock The challenges of the nonlinear regime for physics-informed neural networks.
\newblock {\em Advances in Neural Information Processing Systems}, 37:41852--41881, 2024.

\bibitem{haraux2012some}
Alain Haraux.
\newblock Some applications of the {{\L}ojasiewicz} gradient inequality.
\newblock {\em Communications in Pure and Applied Analysis}, 11(6):2417--2427, 2012.

\bibitem{wight2021solving}
Colby~L. Wight and Jia Zhao.
\newblock Solving {Allen-Cahn} and {Cahn-Hilliard} equations using the adaptive physics informed neural networks.
\newblock {\em Communications in Computational Physics}, 29(3):930--954, 2021.

\bibitem{qiu2025data}
Weixin Qiu, Zhizeng Si, Dasheng Mou, Chaoqing Dai, Jitao Li, and Wei Liu.
\newblock Data-driven vector degenerate and nondegenerate solitons of coupled nonlocal nonlinear {Schr{\"o}dinger} equation via improved {PINN} algorithm.
\newblock {\em Nonlinear Dynamics}, 113(5):4063--4076, 2025.

\bibitem{chen2024analysis}
Mo~Chen, Yuling Jiao, Xiliang Lu, Pengcheng Song, Fengru Wang, and Jerry~Zhijian Yang.
\newblock Analysis of {Deep Ritz} methods for semilinear elliptic equations.
\newblock {\em Numerical Mathematics: Theory, Methods and Applications}, 17(1):181--209, 2024.

\bibitem{tang2023physics}
Siping Tang, Xinlong Feng, Wei Wu, and Hui Xu.
\newblock Physics-informed neural networks combined with polynomial interpolation to solve nonlinear partial differential equations.
\newblock {\em Computers \& Mathematics with Applications}, 132:48--62, 2023.

\bibitem{savovic2023comparative}
Svetislav Savovi{\'c}, Milo{\v{s}} Ivanovi{\'c}, and Rui Min.
\newblock A comparative study of the explicit finite difference method and physics-informed neural networks for solving the {Burgers'} equation.
\newblock {\em Axioms}, 12(10):982, 2023.

\bibitem{gd_cong2}
Simon~S. Du, Jason Lee, Haochuan Li, Liwei Wang, and Xiyu Zhai.
\newblock Gradient descent finds global minima of deep neural networks.
\newblock In {\em International Conference on Machine Learning}, pages 1675--1685. PMLR, 2019.

\bibitem{luo2024two}
Tao Luo and Haizhao Yang.
\newblock Two-layer neural networks for partial differential equations: Optimization and generalization theory.
\newblock In {\em Handbook of Numerical Analysis}, volume~25, pages 515--554. Elsevier, 2024.

\bibitem{du2018gradient}
Simon~S. Du, Xiyu Zhai, Barnabas Poczos, and Aarti Singh.
\newblock Gradient descent provably optimizes over-parameterized neural networks.
\newblock In {\em International Conference on Learning Representations}, 2019.

\bibitem{bolte2007lojasiewicz}
J{\'e}r{\^o}me Bolte, Aris Daniilidis, and Adrian Lewis.
\newblock The {{\L}ojasiewicz} inequality for nonsmooth subanalytic functions with applications to subgradient dynamical systems.
\newblock {\em SIAM Journal on Optimization}, 17(4):1205--1223, 2007.

\bibitem{alaa2013convergence}
Nour~Eddine Alaa and Morgan Pierre.
\newblock Convergence to equilibrium for discretized gradient-like systems with analytic features.
\newblock {\em IMA Journal of Numerical Analysis}, 33(4):1291--1321, 2013.

\bibitem{schneider2015convergence}
Reinhold Schneider and Andr{\'e} Uschmajew.
\newblock Convergence results for projected line-search methods on varieties of low-rank matrices via {{\L}ojasiewicz} inequality.
\newblock {\em SIAM Journal on Optimization}, 25(1):622--646, 2015.

\bibitem{attouch2010proximal}
H{\'e}dy Attouch, J{\'e}r{\^o}me Bolte, Patrick Redont, and Antoine Soubeyran.
\newblock Proximal alternating minimization and projection methods for nonconvex problems: An approach based on the {Kurdyka}-{{\L}ojasiewicz} inequality.
\newblock {\em Mathematics of operations research}, 35(2):438--457, 2010.

\bibitem{karimi2016linear}
Hamed Karimi, Julie Nutini, and Mark Schmidt.
\newblock Linear convergence of gradient and proximal-gradient methods under the {Polyak}-{{\L}ojasiewicz} condition.
\newblock In {\em Joint European Conference on Machine Learning and Knowledge Discovery in Databases}, pages 795--811. Springer, 2016.

\bibitem{forti2006convergence}
Mauro Forti, Paolo Nistri, and Marc Quincampoix.
\newblock Convergence of neural networks for programming problems via a nonsmooth {{\L}ojasiewicz} inequality.
\newblock {\em IEEE Transactions on Neural Networks}, 17(6):1471--1486, 2006.

\bibitem{li2023convergence}
Xiao Li, Andre Milzarek, and Junwen Qiu.
\newblock Convergence of random reshuffling under the {Kurdyka--{\L}ojasiewicz} inequality.
\newblock {\em SIAM Journal on Optimization}, 33(2):1092--1120, 2023.

\bibitem{lee2016gradient}
Jason~D Lee, Max Simchowitz, Michael~I Jordan, and Benjamin Recht.
\newblock Gradient descent only converges to minimizers.
\newblock In {\em Conference on Learning Theory}, pages 1246--1257. PMLR, 2016.

\bibitem{ahmadova2023convergence}
Arzu Ahmadova.
\newblock Convergence results for gradient flow and gradient descent systems in the artificial neural network training.
\newblock {\em arXiv preprint arXiv:2306.13086}, 2023.

\bibitem{IGD}
Ye~Li, Songcan Chen, and Shengjun Huang.
\newblock Implicit stochastic gradient descent for training physics-informed neural networks.
\newblock In {\em Proceedings of the AAAI Conference on Artificial Intelligence}, volume~37, pages 8692--8700, 2023.

\bibitem{chen2023random}
Jingrun Chen, Yixin Luo, et~al.
\newblock The random feature method for time-dependent problems.
\newblock {\em arXiv preprint arXiv:2304.06913}, 2023.

\bibitem{merlet2010convergence}
Beno{\^\i}t Merlet and Morgan Pierre.
\newblock Convergence to equilibrium for the backward {Euler} scheme and applications.
\newblock {\em Communications on Pure and Applied Analysis}, 9(3):685--702, 2010.

\bibitem{zhang2024linear}
Leyang Zhang.
\newblock Linear independence of generalized neurons and related functions.
\newblock {\em arXiv preprint arXiv:2410.03693}, 2024.

\bibitem{xu2024overparametrized}
Jiaming Xu and Hanjing Zhu.
\newblock Overparametrized multi-layer neural networks: Uniform concentration of neural tangent kernel and convergence of stochastic gradient descent.
\newblock {\em Journal of Machine Learning Research}, 25(94):1--83, 2024.

\bibitem{dereich2021convergence}
Steffen Dereich and Sebastian Kassing.
\newblock Convergence of stochastic gradient descent schemes for {{\L}ojasiewicz}-landscapes.
\newblock {\em arXiv preprint arXiv:2102.09385}, 2021.

\bibitem{an2023convergence}
Jing An and Jianfeng Lu.
\newblock Convergence of stochastic gradient descent under a local {{\L}ojasiewicz} condition for deep neural networks.
\newblock {\em arXiv preprint arXiv:2304.09221}, 2023.

\end{thebibliography}

\appendix

\section{Proof of \texorpdfstring{\cref{lemma:tanh}}{Lemma \ref{lemma:tanh}}}\label{sec:proof lemma tanh}
We provide the proof of \cref{lemma:tanh}, which establishes an important property of the $\tanh$ activation function. 
More importantly, this ensures that our convergence results apply to neural networks with $\tanh$ activation, whether used in PINN or Deep Ritz solvers.
\begin{proof}
    We first note that the $\tanh$ function is an odd function and $\tanh'=1-\tanh^2$ is an even function. 
So without loss of generality, we can assume that \(p_1, \ldots, p_m\) are distinct positive numbers, otherwise, we replace $\tanh(p_r t+ q_r) $ by $-\tanh(-p_r t-q_r)$ and $\tanh'(p_r t+ q_r) $ by $\tanh'(-p_r t-q_r)$.
We can also assume that \(p_1 < p_2 < \cdots < p_m\) .
We divide the proof into two cases according to whether $\beta=0$ or $\beta\neq 0$.

    \paragraph{Case 1: $\beta=0$.}

    Given any positive integer $m$, for any set of \(m\) real numbers 
\begin{equation*}   
    \{p_i: p_{i}\neq \pm p_{j}, \forall 1\le i<j\le m\}
\end{equation*}
    and any real numbers \(q_1, \ldots, q_m\), we need to prove that  the functions 
    \begin{equation*}
        \tanh(p_1 t + q_1), \cdots,\tanh(p_m t + q_m) 
    \end{equation*} 
are linear independent.

Take $c_1,\ldots,c_m$ to be real numbers such that 
\begin{equation}
c_1 \tanh(p_1 t + q_1) + \cdots + c_m \tanh(p_m t + q_m) = 0, \quad \forall t \in \mathbb{R}. \label{eq:linear com3}
\end{equation}

In the above equation, letting \( t \to \infty \) and noting that all \( p_r \) are positive real numbers, we obtain 
\begin{equation}
    c_1 + \cdots + c_m = 0 .
    \end{equation}

Substituting $\tanh(t)=\frac{\mathrm{e}^{2t}-1}{\mathrm{e}^{2t}+1}$ into \cref{eq:linear com3}, we obtain 
\begin{equation*}
    \sum_{k=1}^{m} c_k \frac{\mathrm{e}^{2(p_k t + q_k)} - 1}{\mathrm{e}^{2(p_k t + q_k)} + 1} = 0, \quad \forall t \in \mathbb{R}.
\end{equation*}
Multiplying both sides of the above equality by $\prod_{l=1}^{m}(\mathrm{e}^{2(p_l t + q_l)} + 1)$, we have
\begin{equation}
    \sum_{k=1}^{m}c_k (\mathrm{e}^{2(p_k t + q_k)} - 1) \prod_{l=1, l \neq k}^{m} (\mathrm{e}^{2(p_l t + q_l)} + 1) = 0, \quad \forall t \in \mathbb{R}.\label{eq:linear com 4}
\end{equation}
In fact, each term in the above expression can be written in the following form:
\begin{equation*}
    \tilde{c}_{K}\mathrm{e}^{2\sum_{k\in K}(p_k t+q_k)} ,
\end{equation*}
where $K$ is a subset of $\{1,\ldots,m\}$ and $\tilde{c}_{K}$ is a constant.

Let us focus on one of these terms in particular, $e^{2(p_1 t+q_1)} $. By observing \cref{eq:linear com 4}, we see that the coefficient in front of this term is $c_{1}-\sum_{l\neq 1}c_{l}$.
Moreover, since \( p_1 < p_2 < \ldots < p_m \), we know that for any nonempty $\{1\}\neq K \subset \{1,\ldots,m\}$ , $p_{m}< \sum_{k\in K}p_k$.

Thus, we can conclude that the coefficient $c_{1}-\sum_{l\neq 1}c_{l}$ in front of $\mathrm{e}^{2(p_1 t+q_1)}$ is non-zero. Otherwise, there exist constants $\alpha_{K}$ such that
\begin{equation*}
    e^{2(p_1 t+q_1)} = \sum_{K\neq\{1\}} \alpha_{K} e^{2\sum_{k\in K}(p_k t+q_k)},
\end{equation*}
which contradicts the fact that for any $n$, the set $\mathrm{e}^{a_{i}t}, a_{i}\neq a_{j}\ if \ i\neq j\}_{i=1}^{n}$ is linear independent.

Combining the above arguments, we can conclude that $\sum_{l=1}^{m}c_{l}=0$ and $c_{1}-\sum_{l\neq 1}c_{l}=0$, thereby $c_{1}=0$. 
Following the same reasoning, we can similarly obtain \( c_2 = 0, \ldots, c_m = 0 \). Thus, this case is proved.

\paragraph{Case 2: $\beta\neq 0$.}The underlying idea of the proof remains unchanged, but a more detailed treatment of the coefficient in front of $\mathrm{e}^{2(p_1 t+q_1)}$ is required.
We need to prove that  the functions 
    \begin{equation*}
        \alpha \tanh(p_1 t + q_1)+\beta \tanh'(p_1 t + q_1)\ ,\ \ldots, \ \alpha \tanh(p_m t + q_m)+\beta \tanh'(p_m t + q_m)
    \end{equation*}
are linear independent. Let us assume $\beta \neq-\frac{1}{2}$ . If this is not the case, we can multiply $\alpha$ and $\beta$ by a common factor to arrange it so.

Take $c_1,\ldots,c_m$ to be real numbers such that 
\begin{equation}
\sum_{k=1}^{m}c_k \left( \alpha \tanh(p_k t + q_k)+\beta \tanh'(p_k t + q_k)\right)  = 0, \quad \forall t \in \mathbb{R}. \label{eq:linear com}
\end{equation}

In the above equation, letting \( t \to \infty \) and noting that all \( p_r \) are positive real numbers, we obtain 
\begin{equation}
c_1 + \cdots + c_m = 0 .
\end{equation}

Substituting $\tanh(t)=\frac{\mathrm{e}^{2t}-1}{\mathrm{e}^{2t}+1}$  and $\tanh'(t)=\frac{4\mathrm{e}^{2t}}{(\mathrm{e}^{2t}+1)^2}$ into  \cref{eq:linear com}, we obtain 
\begin{equation*}
    \sum_{k=1}^{m} c_k \left[\alpha \frac{\mathrm{e}^{2(p_k t + q_k)} - 1}{\mathrm{e}^{2(p_k t + q_k)} + 1}+\beta \frac{4\mathrm{e}^{2(p_k t + q_k)}}{(\mathrm{e}^{2(p_k t + q_k)}+1)^2}\right]  = 0, \quad \forall t \in \mathbb{R}.
\end{equation*}
Multiplying both sides of the above equality by $\prod_{l=1}^{m}(\mathrm{e}^{2(p_l t + q_l)} + 1)^2$, we have
\begin{equation}
    \sum_{k=1}^{m} c_k \left[\alpha\left( \mathrm{e}^{4(p_k t + q_k)} - 1\right)+4\beta\mathrm{e}^{2(p_k t + q_k)}\right]  \prod_{l=1, l \neq k}^{m} (\mathrm{e}^{2(p_l t + q_l)} + 1)^2 = 0, \quad \forall t \in \mathbb{R}.\label{eq:linear com 2}
\end{equation}
In fact, each term in \cref{eq:linear com 2} can be written in the following form:
\begin{equation*}
    \tilde{c}_{K}e^{2\sum_{k\in K}(p_k t+q_k)} ,
\end{equation*}
where $K$ is any multiset of the elements from $\{1,\ldots,m\}$ in which each element may appear at most twice and $\tilde{c}_{K}$ is a constant.

By observing \cref{eq:linear com 2}, we see that the coefficient in front of this term is $4\beta c_{1}-2\sum_{l\neq 1}c_{l}=0$.

Combining the above arguments, we can conclude that $\sum_{l=1}^{m}c_{l}=0$ and $4\beta c_{1}-2\sum_{l\neq 1}c_{l}=0$, thereby $(4\beta+2)c_{1}=0$, i.e., $c_1=0$ because of $\beta\neq -\frac{1}{2}$. 
Following the same reasoning, we can similarly obtain \( c_2 = 0, \ldots, c_m = 0 \). Thus, the lemma is proved.

\end{proof}

\section{Proof of \texorpdfstring{\cref{proposition:convergence under coer}}{Proposition \ref{proposition:convergence under coer}}}\label{sec:proof convergence under coer}
\begin{proof}
    We provide a detailed proof in the case of the gradient flow.

    \( \theta(t) \) is the solution to the gradient flow. 
    Thus, the loss function is monotonically decreasing along the trajectory $\theta(t)$. So there exists a constant \( C \) such that 
    \begin{equation*}
        \|\theta(t)\|_{2} \leq C, \quad \forall t>0 .
    \end{equation*} 
    because the loss function  $\mathcal{J}$ is coercive.

    Because $\theta(t)$ is uniformly bounded, there exists a subsequence \( t_n \to \infty \) such that \( \theta(t_n) \to \theta^* \), which is a critical point of $\mathcal{J}$.
    
    We now have  
    \begin{equation*}
        \frac{d}{dt} \left(\mathcal{J}(\theta(t))-\mathcal{J}(\theta^*)\right) = -\|\nabla \mathcal{J}(\theta(t))\|^2  .
    \end{equation*}
     
    We first consider the case when $\epsilon \in(0,\frac{1}{2})$. Since \(\mathcal{J}(\theta(t))\) is nonincreasing, we have \(z(t) := \mathcal{J}(\theta(t))-\mathcal{J}(\theta^*) \ge 0\), and as a consequence of \cref{thm:Lj},  
    \begin{equation}
    z'(t) \le - (z(t))^{2(1-\epsilon)} \implies \mathcal{J}(\theta(t))-\mathcal{J}(\theta^*) =z(t) \le K_1 t^{-\frac{1}{1-2\epsilon}} . \label{eq:ode}
    \end{equation} 
    
    Now since  
    \begin{equation*}  
    \|\theta'(t)\|^2 = -z'(t)  ,
    \end{equation*}  
    we have  
    \begin{equation*}  
    \int_t^{2t} \|\theta'(s)\|^2 ds = z(t) - z(2t) \le z(t) \le K_1 t^{-\frac{1}{1-2\epsilon}}  .
    \end{equation*}  
    
    Then by the Cauchy--Schwarz inequality,  
    \begin{equation*}  
    \int_t^{2t} \|\theta'(s)\| ds \le K_1 t^{-\frac{\epsilon}{1-2\epsilon}}  .
    \end{equation*}  
    
    Indeed it implies for all $t<t_{n}$, 
    \begin{equation*}
        \begin{aligned}
             \|\theta(t)-\theta(t_{n})\|_{2}&=\|\int_t^T \theta'(s) ds\|\le \int_t^T \|\theta'(s)\| ds \le K_1 \sum_{0}^{\infty} (2^k t)^{-\frac{\epsilon}{1-2\epsilon}} \\
             &= K_1 \sum_{0}^{\infty} 2^{-\frac{\epsilon}{1-2\epsilon} k} t^{-\frac{\epsilon}{1-2\epsilon}} = K_2 t^{-\frac{\epsilon}{1-2\epsilon}} .
        \end{aligned}
    \end{equation*} 
    Letting $n$ tend to infinity in the above expression completes the proof.
    
    If $\alpha = \frac{1}{2}$, we rewrite \cref{eq:ode} as
    \begin{equation*}
        \mathcal{J}(\theta(t))-\mathcal{J}(\theta^*) \le K_1 \exp(-t) .
    \end{equation*}
    The rest of the proof is basically the same.

    It is worth noting that implicit gradient descent (IGD) can be viewed as the backward Euler discretization of the gradient flow. 
    Several works have studied the convergence properties of the backward Euler scheme. 
    Applying Theorem 2.4 and Proposition 2.5 from~\cite{merlet2010convergence}, we can establish the desired result. 
\end{proof}

\section{Extension to \texorpdfstring{\cref{sec:boundary coercivity}}{Section \ref{sec:boundary coercivity}}}
\subsection{Proof of \texorpdfstring{\cref{proposition:local flat bd lin indep}}{Proposition \ref{proposition:local flat bd lin indep}}}\label{sec:proof local flat bd lin indep}
We now present the proof for the \cref{proposition:local flat bd lin indep} in \cref{sec:boundary coercivity}. 
Essentially, we need to process the functions so that it depends on a single variable, and then we can apply \cref{lemma:tanh}, which has already been proven.
\begin{proof}
    Let $\{(w_{k},b_{k})\}_{k=1}^{m}$ be the admissible inner-layer parameters of the neural network, where $w_{k}\in\mathbb{R}^{d}$ and $b_{k}\in\mathbb{R}$.

    Take $c_{1},\ldots,c_{m}$ such that $\sum_{i=1}^{m}c_{i}\left( \alpha \tanh(w_i^\top x + b_i)+\beta w_{i,d} \tanh'(w_i^\T x+b_i)\right)= 0$ in $L^{2}(\Gamma)$. 
    
    Because of continuty, we obtain that $\sum_{i=1}^{m}c_{i}\left( \alpha \tanh(w_i^\top x + b_i)+\beta w_{i,d} \tanh'(w_i^\T x+b_i)\right)\equiv 0$ on $\Gamma$.

    We denote the first \(d-1\) components of the vector \( x\in \mathbb{R}^d \) by a new \((d-1)\)-dimensional vector \(\tilde{x}\), i.e., $x=(\tilde{x}^\T ,x_{d})^\T$. 
    Using \cref{assumption:flat}, we can rewrite the above equality as
    \begin{equation*}
        \sum_{i=1}^{m}c_{i}\left( \alpha \tanh(\tilde{w}_i^\top \tilde{x} +\gamma w_{i,d}+ b_i)+\beta w_{i,d} \tanh'(\tilde{w}_i^\top \tilde{x} +\gamma w_{i,d}+b_i)\right)=0, \ \forall x=(\tilde{x}^\T,\gamma)^\T\in\Gamma .
    \end{equation*}

    Note that $\lambda_{d-1}(\Gamma)>0$, so there exists an open ball $B$ contained in $\mathbb{R}^{d-1}$, such that 
    \begin{equation*}
        \sum_{i=1}^{m}c_{i}\left( \alpha \tanh(\tilde{w}_i^\top x +\gamma w_{i,d}+ b_i)+\beta w_{i,d} \tanh'(\tilde{w}_i^\top x +\gamma w_{i,d}+b_i)\right)=0, \quad \forall \tilde{x} \in B .
    \end{equation*}
    which is equivalent to
    \begin{equation}
        \sum_{i=1}^{m}c_{i}\left( \alpha \tanh(\tilde{w}_i^\top x +\gamma w_{i,d}+ b_i)+\beta w_{i,d} \tanh'(\tilde{w}_i^\top x +\gamma w_{i,d}+b_i)\right)=0, \quad \forall \tilde{x} \in \mathbb{R}^{d-1} .\label{eq:all space}
    \end{equation}
    because $\tanh$ is an analytic function.

    For any $1\le k<j\le m$, define $A_{k,j}, B_{k,j}$ as follows:
    \begin{equation*}
        A_{k,j}=\{\tilde{x}\in \mathbb{R}^{d-1}: (\tilde{w}_{k}-\tilde{w}_{j})^\T \tilde{x}=0\}, \quad B_{k,j}=\{\tilde{x}\in \mathbb{R}^{d-1}: (\tilde{w}_{k}+\tilde{w}_{j})^\T \tilde{x}=0\}.
    \end{equation*}
    
    The sets $A_{k,j}, B_{k,j}$ are subspaces of dimension $d-2$, so $\bigcup_{1\le k<j\le m}(A_{k,j}\cup B_{k,j})$ has $\lambda_{d-1}$
-measure zero. This implies that we can choose some $e\in \mathbb{R}^{d-1}$ with $\|e\|_{2}=1$ such that for all $1\le k<j\le m$,  
\begin{equation*}
p_{k}:=\tilde{w}_{k}^\T e\neq \pm\tilde{w}_{j}^\T e =: p_{j} . 
\end{equation*}

By \cref{eq:all space}, we have for $\varepsilon \in \mathbb{R}$ and $\tilde{x}=\varepsilon e$,
\begin{equation*}
    \sum_{i=1}^{m}c_{i}\left( \alpha \tanh(\tilde{w}_i^\top  e \varepsilon  +\gamma w_{i,d}+ b_i)+\beta w_{i,d} \tanh'(\tilde{w}_i^\top e \varepsilon +\gamma w_{i,d}+b_i)\right)=0, \quad \forall \tilde{x} \in \mathbb{R}^{d-1} .\label{eq:all space1}
\end{equation*}
Note that $p_{k}\neq \pm p_{j}$ for all $1\le k<j\le m$, we can obtain that $c_{k}=0$ for all $1\le k\le m$ by using \cref{lemma:tanh}.

So we can conclude that the functions
\begin{equation*}
    \alpha \tanh(w_1^\top x + b_1)+\beta w_{1,d} \tanh'(w_1^\T x+b_1), \ldots, \alpha \tanh(w_m^\top x + b_m)+\beta w_{m,d} \tanh'(w_m^\T x+b_m)
\end{equation*}
are linearly independent in \(L^{2}(\Gamma)\).

Recall that $u=\sum_{k=1}^{m}a_k \sigma(w_k ^\T x+ b_k)$.
Under \cref{assumption:flat}, without loss of generality, we can express the outward normal vector as \(n = (0, \ldots, 0, 1)\) on the flat segment \(\Gamma\). Then we can rewrite the outward derivative $\frac{\partial u}{\partial n}$ as
\begin{equation*}
    \frac{\partial u}{\partial n} = \sum_{k=1}^{m} a_k w_{k,d} \sigma'(w_k^\T x + b_k).
\end{equation*}

Define Gram matrix \(G=(G_{ij})_{1\le i, j\le m}\),  where 
\begin{equation*}
    \begin{aligned}
        G_{ij}=&\int_{\Gamma} \left(\alpha \tanh(w_i^\top x + b_i)+\beta w_{i,d} \tanh'(w_i^\T x+b_i)\right)\\
        \quad&\left( \alpha \tanh(w_j^\top x + b_j)+\beta w_{j,d} \tanh'(w_j^\T x+b_j)\right)\d x.
    \end{aligned}
\end{equation*}
The Gram matrix \(G\) is positive definite by the linear independence, and we denote the smallest eigenvalue of $G$ as $\lambda_{\text{min}}>0$.

Then we have
\begin{equation}
    \begin{aligned}
\left\|\alpha u+\beta \frac{\partial u}{\partial n}\right\|_{L^{2}(\Gamma)}^2 &= \left\|\sum_{k=1}^{m}a_k\left(\alpha \tanh(w_k^\top x + b_k)+\beta w_{k,d} \tanh'(w_k^\T x+b_k)\right)\right\|_{L^{2}(\Gamma)}^2 \\
&= a^\T G a \ge \lambda_{\text{min}} \|a\|_{2}^2 .
    \end{aligned}
\end{equation}
\end{proof}

\subsection{Discussion on empirical loss function}\label{sec:empirical loss}
In practice, we approximate the loss function \eqref{eq:loss} using discrete sample points. 
Let \(X^{(1)} = \{x_k^{(1)}\}_{k=1}^{n_1} \subset \Omega\) (interior points) and \(X^{(2)} = \{x_k^{(2)}\}_{k=1}^{n_2} \subset \partial\Omega\) (boundary points). 
Under Dirichlet boundary condition, the empirical loss is written as:
\begin{equation}
\mathcal{J}_{\text{emp}}(a) = \frac{1}{n_1}\sum_{k=1}^{n_1} \left(\mathcal{L}u_\theta(x_k^{(1)}) - f(x_k^{(1)})\right)^2 + \frac{\lambda}{n_2}\sum_{k=1}^{n_2} \left(u_\theta(x_k^{(2)}) - g(x_k^{(2)})\right)^2 .\label{eq:empirical_loss}
\end{equation}

Under \cref{assumption:flat}, we assume \(n_2 \geq m\) and consider a subset \(\tilde{X}^{(2)} = \{x_k^{(2)}\}_{k=1}^m \subset \Gamma\). For each \(1 \leq k \leq m\), define the activation vector,
\begin{equation*}
\sigma_k(\tilde{X}^{(2)}) = \left(\tanh(w_k^\top x_1^{(2)} + b_k), \dots, \tanh(w_k^\top x_m^{(2)} + b_k)\right)^\top .
\end{equation*}
Then we can establish the following result, which is a discrete version of \cref{proposition:local flat bd lin indep}. 
However, the techniques needed are not the same.
\begin{proposition}[Linear independence on discrete points]\label{proposition:discrete_independence}  
For any \(m\) vectors \(\{\tilde{w}_k\}_{k=1}^m \subset \mathbb{R}^{d-1}\) with \(\tilde{w}_i \neq \pm\tilde{w}_j\) (\(i \neq j\)), and for sufficiently small \(w_{k,d}, b_k \in \mathbb{R}\), the vectors \(\{\sigma_k(\tilde{X}^{(2)})\}_{k=1}^m\) are linearly independent for almost all \(\tilde{X}^{(2)} \in \Gamma^m\).
\end{proposition}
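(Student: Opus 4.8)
The plan is to show the $m\times m$ matrix $M$ with columns $\sigma_k(\tilde X^{(2)})$ — equivalently the matrix with entries $M_{jk}=\tanh(w_k^\top x_j^{(2)}+b_k)=\tanh(\tilde w_k^\top \tilde x_j^{(2)}+\gamma w_{k,d}+b_k)$, using that $x_j^{(2)}\in\Gamma$ has $d$-th coordinate $\gamma$ — is nonsingular for generic choices of the interior coordinates $\tilde x_1^{(2)},\dots,\tilde x_m^{(2)}$, once $w_{k,d}$ and $b_k$ are small. First I would absorb $\gamma w_{k,d}+b_k$ into a single small scalar shift $s_k$, so it suffices to prove: for $\{\tilde w_k\}$ pairwise non-parallel (and distinct) and for all sufficiently small $(s_1,\dots,s_m)$, the function $(\tilde x_1,\dots,\tilde x_m)\mapsto \det\big(\tanh(\tilde w_k^\top \tilde x_j+s_k)\big)_{j,k}$ is not identically zero on $\Gamma^m$; since this determinant is real-analytic in the $\tilde x_j$, its zero set then has $\lambda_{(d-1)m}$-measure zero in $\Gamma^m$, which is exactly the claim. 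The real-analyticity is immediate because $\tanh$ is entire and $\Gamma$ contains an open $(d-1)$-ball (\cref{assumption:flat}(ii)), so the determinant extends analytically to a neighbourhood and "not identically zero" upgrades to "nonzero a.e."

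The core is therefore the non-vanishing of the determinant for small shifts. The natural route is a perturbative / continuity argument anchored at $s=0$. At $s=0$ the columns are $\tanh(\tilde w_k^\top\tilde x_j)$, which need not obviously give a nonsingular matrix, so instead I would use a Taylor expansion of $\tanh$ near $0$: write $\tanh(\tilde w_k^\top\tilde x_j+s_k)=\sum_{\ell\ge 0} \tanh^{(\ell)}(s_k)\,(\tilde w_k^\top\tilde x_j)^\ell/\ell!$. Choosing the $\tilde x_j$ so that the monomials $(\tilde w_k^\top\tilde x)^\ell$ are "rich enough", the leading behaviour of the determinant as a function is governed by a generalized Vandermonde-type object in the distinct slopes $\tilde w_k^\top e$ along a generic direction $e$ (reusing the idea from the proof of \cref{proposition:local flat bd lin indep} of picking $e$ with $\tilde w_k^\top e\neq\pm\tilde w_j^\top e$, then setting $\tilde x_j=t_j e$ for generic scalars $t_j$, reducing to the one-dimensional situation). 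Concretely, restricting to $\tilde x_j=t_j e$ turns the matrix into $\big(\tanh(p_k t_j+s_k)\big)_{j,k}$ with the $p_k=\tilde w_k^\top e$ pairwise distinct in absolute value; by \cref{lemma:tanh} the functions $t\mapsto\tanh(p_k t+s_k)$ are linearly independent, hence for generic $t_1,\dots,t_m$ the matrix $\big(\tanh(p_k t_j+s_k)\big)$ is nonsingular — and this holds for every choice of $s_k$, so in fact the "small shift" hypothesis is only needed to keep us inside $\Gamma$ (i.e. to ensure the relevant points $t_j e$ stay in the ball $B$), not for the algebra.

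So the cleaner structure is: (1) reduce to the interior coordinates via the flat-boundary chart and rename the constant shift; (2) observe the determinant is real-analytic in $(\tilde x_1,\dots,\tilde x_m)\in\Gamma^m$ and that $\Gamma$ contains an open ball, so it suffices to exhibit one point where it is nonzero; (3) pick a generic direction $e$ with $\tilde w_k^\top e$ pairwise non-$\pm$-equal, place all $\tilde x_j$ on the line $\mathbb{R}e$ scaled small enough to lie in $\Gamma$, and invoke \cref{lemma:tanh} plus the standard fact that linearly independent real-analytic functions have a nonsingular sampling matrix at generic sample points (a Wronskian/Gram-matrix argument) to conclude nonsingularity there; (4) conclude the zero set is null. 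The main obstacle I anticipate is step (3): extracting a concrete nonsingular configuration from abstract linear independence requires the lemma that $m$ linearly independent analytic functions on an interval admit $m$ points making the evaluation matrix invertible — true (e.g. by induction: having found $t_1,\dots,t_{r}$ with the $r\times r$ block nonsingular, the map $t\mapsto$ the $(r+1)$-st row relation is a nonzero analytic function of $t$, so vanishes only on a discrete set) — but it must be stated and applied carefully, and one must also verify the chosen points can be taken small enough to sit inside $\Gamma$, which is where the smallness of $w_{k,d}, b_k$ genuinely enters (it guarantees the shifted arguments and the relevant line segment stay in the coordinate chart's ball $B$). Everything else is bookkeeping.
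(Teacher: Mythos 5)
Your proposal is correct and follows essentially the same route as the paper: reduce to the one-dimensional situation along a generic direction $e$ with $\tilde{w}_k^\top e$ pairwise non-$\pm$-equal, use \cref{lemma:tanh} together with the inductive sampling argument (which the paper packages as a separate existence lemma, \cref{proposition:dis independence exixtence}) to exhibit one configuration in $\Gamma^m$ with nonsingular evaluation matrix, and then upgrade to ``almost all'' via real-analyticity of $\det\sigma(\cdot)$. Your aside that the smallness of $w_{k,d}, b_k$ is not genuinely needed for the linear-independence algebra is also consistent with the paper, whose proof of the existence lemma never invokes it.
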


Here, "almost all" means the condition holds generically, making it practically feasible to find suitable sample points.
This leads to our main convergence guarantee for the empirical loss.

\begin{theorem}[Global convergence of empirical loss]\label{thm:global_convergence_emp}
Under \cref{assumption:flat} , we consider the empirical loss \(\mathcal{J}_{\text{emp}}(a)\) in \eqref{eq:empirical_loss} with randomly sampled point sets \(X^{(1)}, X^{(2)}\). When inner-layer parameters \(\{(w_k,b_k)\}_{k=1}^m\) are initialized as:
\(\tilde{w}_i \sim \mathcal{N}(0,I_{d-1})\) i.i.d ;  \(|w_{i,d}|, |b_i| < \delta\) (sufficiently small)
then with probability 1, all convergence results of \cref{proposition:convergence under coer} hold for both gradient flow \eqref{eq:gf} and implicit gradient descent \eqref{eq:IGD}.
\end{theorem}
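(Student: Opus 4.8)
The plan is to reduce the claim to the coercivity hypothesis of \cref{proposition:convergence under coer}, mirroring the argument behind \cref{theorem:convergence under flat} but at the discrete level. In the random feature model $u_\theta = \sum_{k=1}^m a_k \tanh(w_k^\T x + b_k)$ is linear in $a$, so $\mathcal{L}u_\theta$ is obtained from $a$ through analytic operations; hence $a \mapsto \mathcal{J}_{\text{emp}}(a)$ in \eqref{eq:empirical_loss} is real analytic, which is what makes the \L{}ojasiewicz machinery behind \cref{proposition:convergence under coer} applicable. It therefore suffices to prove that, with probability $1$ over the joint law of the initialization $\{(w_k,b_k)\}_{k=1}^m$ and the boundary nodes $X^{(2)}$, the function $\mathcal{J}_{\text{emp}}(\cdot)$ is coercive in $a$; \cref{proposition:convergence under coer} then delivers every stated conclusion for both the gradient flow \eqref{eq:gf} and the IGD iteration \eqref{eq:IGD}.

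For the coercivity bound I would discard all interior summands (each nonnegative) and all boundary summands except those at a subset $\tilde X^{(2)} = \{x_j^{(2)}\}_{j=1}^m \subset \Gamma$; such a subset exists almost surely provided the boundary sampling charges the flat piece $\Gamma$ (which has $\lambda_{d-1}(\Gamma) > 0$ by \cref{assumption:flat}) with at least $m$ nodes, the natural situation in practice (for evolution equations one samples $m$ initial-slice points on $\Gamma = \{t=0\}$ directly from a density). Writing $\Sigma := \bigl(\tanh(w_k^\T x_j^{(2)} + b_k)\bigr)_{1\le j,k\le m}$, so that $(u_\theta(x_j^{(2)}))_{j=1}^m = \Sigma a$, and $G := (g(x_j^{(2)}))_{j=1}^m$, one gets
\[
\mathcal{J}_{\text{emp}}(a) \ \ge\ \frac{\lambda}{n_2}\,\|\Sigma a - G\|_2^2 \ \ge\ \frac{\lambda}{n_2}\,\bigl(\sigma_{\min}(\Sigma)\,\|a\|_2 - \|G\|_2\bigr)^2 \quad\text{whenever } \|a\|_2 \ge \|G\|_2/\sigma_{\min}(\Sigma),
\]
so $\mathcal{J}_{\text{emp}}$ is coercive as soon as $\Sigma$ is invertible, i.e. $\sigma_{\min}(\Sigma) > 0$. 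The whole argument thus collapses to showing that $\Sigma$ is nonsingular with probability $1$.

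This last step is exactly \cref{proposition:discrete_independence}, used as a black box: nonsingularity of $\Sigma$ is the linear independence of its columns $\sigma_k(\tilde X^{(2)})$, which \cref{proposition:discrete_independence} guarantees for Lebesgue-almost-every $\tilde X^{(2)}\in\Gamma^m$ under (i) $\tilde w_i \neq \pm \tilde w_j$ for $i \neq j$ and (ii) $|w_{k,d}|,|b_k|$ small enough. Condition (i) holds almost surely since $\tilde w_1,\dots,\tilde w_m \sim \mathcal{N}(0,I_{d-1})$ are i.i.d. and (using $d\ge 2$) the set $\bigcup_{i\neq j}\{\tilde w_i = \pm \tilde w_j\}$ is a finite union of proper subspaces of $(\mathbb{R}^{d-1})^m$, hence $\lambda$-null. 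Condition (ii) is met by first drawing the (a.s. admissible) directions $\tilde w_k$, which fixes the threshold $\delta_0 = \delta_0(\tilde w,\Gamma) > 0$ supplied by \cref{proposition:discrete_independence}, and then taking the initialization window $\delta < \delta_0$ for the normal and bias components. Finally, conditionally on an admissible draw of $\{\tilde w_k\}$, the nodes $\tilde X^{(2)}$ are sampled from a law absolutely continuous with respect to $\lambda_{d-1}^{\otimes m}$ on $\Gamma^m$ and independent of the initialization, so they avoid the null exceptional set with conditional probability $1$; a Fubini argument then makes the unconditional probability that $\Sigma$ is singular equal to $0$.

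On the resulting probability-$1$ event $\mathcal{J}_{\text{emp}}$ is real analytic and coercive in $a$, and \cref{proposition:convergence under coer} finishes the proof. The main obstacle is genuinely \cref{proposition:discrete_independence}: the device used for the continuous statement \cref{proposition:local flat bd lin indep} — restricting the neuron functions to a line through a well-chosen direction and invoking \cref{lemma:tanh} — is unavailable once the evaluation points are frozen to finitely many nodes, so one needs a different, generic-rank / Vandermonde-type perturbation argument, and it is precisely there that the smallness of $w_{k,d},b_k$ is consumed. Within the present proof the only real care needed is bookkeeping: keeping the two independent sources of randomness in the right order of quantifiers ("for $\lambda_{d-1}$-a.e.\ $\tilde X^{(2)}$, conditionally on an admissible $\tilde w$"), matching the $\delta$-window to the threshold of \cref{proposition:discrete_independence}, and ensuring at least $m$ boundary nodes fall on $\Gamma$.
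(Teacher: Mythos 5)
Your proposal follows essentially the same route as the paper: reduce the claim to coercivity of $\mathcal{J}_{\text{emp}}$ in $a$, establish it via the full rank of the boundary activation matrix on an $m$-point sub-sample $\tilde X^{(2)}\subset\Gamma$ (the content of \cref{proposition:discrete_independence}), and then invoke \cref{proposition:convergence under coer}. Two points where you are tighter than the paper's own argument are worth recording. First, the paper writes the coercivity bound as $\mathcal{J}_{\text{emp}}(a)\ge\frac{\lambda}{n_2}a^\T G a\ge\frac{\lambda}{n_2}\lambda_{\min}(G)\|a\|_2^2$, which silently drops the boundary data $g$ and so would only be literally correct for $g\equiv 0$; your shifted estimate $\mathcal{J}_{\text{emp}}(a)\ge\frac{\lambda}{n_2}\bigl(\sigma_{\min}(\Sigma)\|a\|_2-\|G\|_2\bigr)^2$ for $\|a\|_2$ large is the right form and still yields coercivity. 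Second, you make explicit the order of quantifiers and the Fubini step over the two independent sources of randomness (the a.s.\ admissibility of $\{\tilde w_k\}$, the threshold $\delta_0(\tilde w,\Gamma)$ from \cref{proposition:discrete_independence}, and the fact that the sampling law is absolutely continuous on $\Gamma^m$), together with the standing requirement that at least $m$ boundary nodes land on $\Gamma$; all of this is assumed or left implicit in the paper. These are refinements of rigor rather than a different proof, and they buy you a cleaner statement of exactly which events have probability $1$ and why.
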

We prove the above proposition and theorem in \cref{sec:dis independence proof}.

\subsubsection{Proof of \texorpdfstring{\cref{proposition:discrete_independence}}\ \ and \texorpdfstring{\cref{thm:global_convergence_emp}}{Proposition \ref{proposition:discrete_independence}, Theorem \ref{thm:global_convergence_emp}}}\label{sec:dis independence proof}
The proof of \cref{proposition:discrete_independence} requires special treatment for the selection of boundary sampling points.
We first prove the following lemma, which shows that there exist suitable sampling points to ensure linear independence.
\begin{lemma}\label{proposition:dis independence exixtence} 
    For any choice of \(m\) vectors \(\tilde{w}_1, \ldots, \tilde{w}_m \in \mathbb{R}^{d-1}\) such that $\tilde{w}_{i}\neq \pm \tilde{w}_{i}$ if $i\neq j$, and for sufficiently small scalars \(w_{1,d},\ldots,w_{m,d} \in \mathbb{R}\)  and $b_{1},\ldots,b_{m} \in \mathbb{R}$ , there exists a set $X=\{x_k\}_{k=1}^{m} \subset \Gamma$ such that the vectors \( \sigma_{1}(X),\ldots,\sigma_{m}(X)\) are linearly independent , where $w_{r}=(\tilde{w}_r^\T,w_{r,d})^\T$.
\end{lemma}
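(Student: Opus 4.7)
The plan is to reduce the problem to a one-variable linear-independence question handled by \cref{lemma:tanh}, and then run a standard inductive construction to select the sample points one at a time.

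First, I would exploit the local flatness of $\Gamma$: writing a generic $x \in \Gamma$ as $x = (\tilde{x}^\T,\gamma)^\T$, one has $w_k^\T x + b_k = \tilde{w}_k^\T \tilde{x} + c_k$ with $c_k := \gamma w_{k,d} + b_k$. The target becomes: find $\tilde{x}_1,\ldots,\tilde{x}_m$ in the open $(d-1)$-ball $\pi(\Gamma) := \{\tilde{x} : (\tilde{x}^\T,\gamma)^\T \in \Gamma\}$ such that the matrix $M := [\psi_j(\tilde{x}_i)]_{i,j=1}^m$ is non-singular, where
\[
\psi_j(\tilde{x}) := \tanh(\tilde{w}_j^\T \tilde{x} + c_j).
\]

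Second, I would establish that $\psi_1,\ldots,\psi_m$ are linearly independent on $\pi(\Gamma)$. Since each $\psi_j$ is real-analytic on $\mathbb{R}^{d-1}$, it is enough to rule out any non-trivial relation $\sum_j \lambda_j \psi_j \equiv 0$. Following the restriction-to-a-line device used in the proof of \cref{proposition:local flat bd lin indep}, I would pick a unit vector $e \in \mathbb{R}^{d-1}$ avoiding the finitely many $(d-2)$-dimensional hyperplanes $\{e : (\tilde{w}_i \pm \tilde{w}_j)^\T e = 0\}$; such $e$ exists because $\tilde{w}_i \neq \pm \tilde{w}_j$ for $i \neq j$. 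Substituting $\tilde{x} = \varepsilon e$ reduces the identity to $\sum_j \lambda_j \tanh(p_j \varepsilon + c_j) = 0$ for all $\varepsilon \in \mathbb{R}$, with $p_j := \tilde{w}_j^\T e$ satisfying $p_j \neq \pm p_k$ for $j \neq k$, so \cref{lemma:tanh} (specialized to $\beta = 0$) forces every $\lambda_j = 0$. This step places no constraint on the scalars $c_j$.

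Third, I would build the points inductively. For $k = 1$, choose $\tilde{x}_1 \in \pi(\Gamma)$ with $\psi_1(\tilde{x}_1) \neq 0$, which exists since $\psi_1 \not\equiv 0$. For the inductive step, assume $\tilde{x}_1,\ldots,\tilde{x}_{k-1}$ make the principal $(k-1)\times(k-1)$ submatrix $M_{k-1}$ non-singular, and set
\[
G(\tilde{x}) := \det \begin{pmatrix} M_{k-1} & \mathbf{u} \\ \mathbf{v}(\tilde{x})^\T & \psi_k(\tilde{x}) \end{pmatrix}, \quad \mathbf{u} := (\psi_k(\tilde{x}_1),\ldots,\psi_k(\tilde{x}_{k-1}))^\T, \quad \mathbf{v}(\tilde{x}) := (\psi_1(\tilde{x}),\ldots,\psi_{k-1}(\tilde{x}))^\T.
\]
Cofactor expansion of $G$ along the bottom row writes $G$ as a linear combination of $\psi_1,\ldots,\psi_k$ in which the coefficient of $\psi_k$ equals $\det M_{k-1} \neq 0$. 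By Step two this combination cannot vanish identically on $\pi(\Gamma)$, hence some $\tilde{x}_k \in \pi(\Gamma)$ satisfies $G(\tilde{x}_k) \neq 0$, so that $M_k$ is non-singular. Iterating up to $k = m$ furnishes the required $X = \{(\tilde{x}_k^\T,\gamma)^\T\}_{k=1}^m \subset \Gamma$.

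The main obstacle is Step two: directly invoking \cref{proposition:local flat bd lin indep} would demand $w_{k,d} \neq 0$, which the present hypotheses do not grant. The fix is to replay its argument with $\beta = 0$, in which case the $\tanh'$ terms drop out and only the separation $\tilde{w}_i \neq \pm \tilde{w}_j$ is genuinely used. Consequently, the smallness of $w_{k,d}$ and $b_k$ assumed in the lemma plays no role in the existence proof itself; it is stated only for compatibility with the subsequent perturbative arguments used in \cref{proposition:discrete_independence}.
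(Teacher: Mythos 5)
Your proof is correct and follows essentially the same route as the paper's: an inductive choice of sample points where, at each step, the required non-vanishing is reduced (via a generic line direction $e$ and \cref{lemma:tanh} with $\beta=0$) to the fact that no nontrivial linear combination of the restricted $\tanh$ functions can vanish identically. The only packaging difference is that the paper tests the candidate row $v_k$ against a fixed vector $b$ orthogonal to $v_1,\ldots,v_{k-1}$ and runs the one-variable reduction inside each inductive step, whereas you establish the linear independence of $\psi_1,\ldots,\psi_m$ once in Step two and then read the needed non-vanishing off the cofactor expansion of the leading $k\times k$ minor.
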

\begin{proof}
    Let $v_{i}=(\tanh(w_{1}^\T x_{i}+b_1), \ldots, \tanh(w_{m}^\T x_{i}+b_m))^\T$ . We want to seeek $\{x_{i}\}_{i=1}^{m}$ such that $v_{1},\ldots,v_{m}$ are linear independent.
    We use induction to sequentially find appropriate \(x_{1}, x_{2}, \ldots, x_{m}\).

    First, we can choose \(x_{1}\) arbitrarily in \(\Gamma\) such that $w_{1}^\T x_{1}+b_{1}\neq 0$. Because $\tanh(w_{1}^\T x_{1}+b_{1})\neq 0$, $v_{1}$ is linear independent.
 
    Next, we assume that \(x_{1},\ldots,x_{k-1}\) have been chosen such that \(v_{1},\ldots,v_{k-1}\) are linearly independent.
    We need to choose \(x_{k}\) such that \(v_{1},\ldots,v_{k}\) are linearly independent.

    Choose $e\in \mathbb{R}^{d-1}$ with $\|e\|_{2}=1$ such that $p_{k}:=\tilde{w}_{k}^\T e\neq \pm\tilde{w}_{j}^\T e=:p_{j}$ for all $1\le k<j\le m$.

    Note that $\Gamma=\partial \Omega\cap B(x_0,r)$, we take $x_{k}=((\tilde{x}_{0}+\varepsilon e)^\T, \beta)^\T$ which is naturally in $\Gamma$ for sufficiently small $\varepsilon$.

    Take any non-zero vector $b\in \mathbb{R}^{m}$ such that $b$ is orthogonal to $v_{1},\ldots,v_{k-1}$, i.e., $b^\T v_{i}=0$ for all $1\le i\le k-1$.

    Consider the function $F(\varepsilon)=\sum_{i=1}^{m}b_{i}\tanh(p_{i}\varepsilon+w_{i,d}\gamma+b_i)$. Because $b\neq 0$, $F(\varepsilon)$ is not constant zero by \cref{lemma:tanh}.

    Take any $\varepsilon_{0}$ such that $F(\varepsilon_{0})\neq0$ and  $x_{k}=((\tilde{x}_{0}+\varepsilon e)^\T, \gamma)^\T$, then we can obtain $v_{k} \notin \text{span}\{v_{j}:1\le j\le k-1\}$. Otherwise, $b$ is orthogonal to $v_{k}$ and then $F(\varepsilon_{0})=0$.

    So by induction we can obtain a set $X=\{x_{i}\}_{i=1}^{m}\subset \Gamma$ such that $v_{1},\ldots,v_{m}$ are linear independent, which is equivalent to $\sigma_{1}(X),\ldots,\sigma_{m}(X)$ are linear independent.

\end{proof}

The proof of the \cref{proposition:discrete_independence} is given below. That is, based on the existence, we further show that such sampling points are almost everywhere.
\begin{proof}
 Define matrix $\sigma(X)=(\sigma_{1}(X), \ldots, \sigma_{m}(X))$. We want to show that for almost all $X=(x_{1},...., x_{n})\in \Gamma^{n}$, $\sigma(X)$ is full-rank.

 By Lemma \ref{proposition:dis independence exixtence}, we can find a set $X^{*}=\{x_{k}^{*}\}_{k=1}^{m}\subset \Gamma$ such that $\sigma_{1}(X^{*}),\ldots,\sigma_{m}(X^{*})$ are linear independent, i.e. $\det(\sigma(X^{*}))\neq0$. 
 Note that $\det \sigma(\cdot)$ is an analytic function defined on $\Gamma^m$, so its zero set is of zero measure. This means that for almost all $X=(x_{1},...., x_{m})\in \Gamma^{m}$, $\sigma(X)$ is full-rank.

 Then for any $n\ge m$,for almost all $X=(x_{1},...., x_{n})\in \Gamma^{n}$, $\sigma(X)$ is full-rank.
\end{proof}
Below, based on the previous conclusions, we present the proof of \cref{thm:global_convergence_emp}.
\begin{proof}
    We only prove the second conclusion.

    Because we can, with probability 1, select sampling points $\tilde{X}^{(2)}$ and internal parameters that satisfy the conditions of the above lemma. 
    By linear independence,  the Gram matrix $G=(G_{ij})_{i,j=1}^{m}$ is positive definite, we $G_{ij}=\sigma_{i}(\tilde{X}^{(2)})^\T\sigma_{j}(\tilde{X}^{(2)})$.

    So $\mathcal{J}_{\text{emp}}(a)\ge \frac{\lambda}{n_2} a^\T G a\ge \frac{\lambda}{n_2} \lambda_{\text{min}}(G) \|a\|_{2}^2$, which implies $J$ is coercive. And then we can apply \cref{proposition:convergence under coer}.
\end{proof}

\section{Proof of \texorpdfstring{\cref{proposition:interior L2 control under PINN}}{Proposition \ref{proposition:interior L2 control under PINN}}}\label{sec:proof interior L2 control under PINN}
\begin{proof}
    We take 
    \begin{equation}
        \quad \mathcal{L}u = -\mathrm{div} (|\nabla u|^{p-2}\nabla u) + q(x)u+h(u), \ \text{where}\ p\ge 2,\ q(x)\ge 0 ,\ h(u)u\ge 0 
    \end{equation}
    as an example for the proof.

  Note that in $L^{2}(\Omega)$ inner-product space, we have
  \begin{equation*}
  \begin{aligned}
  \langle \mathcal{L}(\tilde{u}),\tilde{u}  \rangle = &\langle -\mathrm{div} (|\nabla \tilde{u}|^{p-2}\nabla \tilde{u}) + q(x)\tilde{u} +h(\tilde{u}) ,\tilde{u}  \rangle \\
=& \int_{\Omega} \|\tilde{u}\|^{p}+  q(x)\tilde{u}^2+ h(\tilde{u})\tilde{u}~\d x \ (\text{by integration by parts})\\
\ge & \int_{\Omega} \|\tilde{u}\|^{p} ~\d x \ (\text{by the assumption of } q(x) \text{ and } h(u))\\
\ge &  C\|\tilde{u}\|_{L^{2}(\Omega)}^{2} \ (\text{by the Sobolev inequality})\\
  \end{aligned}
  \end{equation*}
    where \(C\) is a constant depending on the domain \(\Omega\) and the Sobolev embedding constant.

On the other hand, using the Cauchy--Schwarz inequality, we have
\begin{equation*}
    \langle \mathcal{L}(\tilde{u}),\tilde{u}  \rangle \le \|\mathcal{L}(\tilde{u})\|_{L^{2}(\Omega)}\|\tilde{u}\|_{L^{2}(\Omega)} .
\end{equation*}

So we can conclude that there exists a constant \(C>0\) such that
\begin{equation*}
   \|u\|_{L^{2}(U)}\le \|\tilde{u}\|_{L^{2}(U)}\le \|\tilde{u}\|_{L^{2}(\Omega)} \le C \|\mathcal{L}(\tilde{u})\|_{L^{2}(\Omega)} \le C( \|\mathcal{L}(\tilde{u})-f\|_{L^{2}(\Omega)}+ \|f\|_{L^{2}(\Omega)})
\end{equation*}
\end{proof}

\section{Deep Ritz-type interior control}\label{sec:Deep Ritz interior}
The Deep Ritz method employs the energy functional $\mathcal{E}(u_{\theta})$ as its interior loss. We demonstrate coercivity through two canonical examples.

\paragraph{Example 1: $p$-Laplace equation}
The $p$-Laplace equation  
\begin{equation}
-\mathrm{div}(|\nabla u|^{p-2}\nabla u) = f(x)
\end{equation}
generalizes the classical Laplace equation (\(p=2\)) to model nonlinear diffusion processes. It arises in non-Newtonian fluid dynamics (\(1<p<2\) for shear-thinning fluids) and image processing (edge-preserving denoising). The associated energy functional  
\begin{equation}
\mathcal{E}(u) = \int_\Omega \frac{1}{p}|\nabla u|^p -f(x)u~\d x \label{eq:p-lap_energy}
\end{equation}
exhibits \(p\)-growth conditions, making its analysis distinct from quadratic elliptic problems.  
A fundamental result of the variational theory: the energy functional $\cref{eq:p-lap_energy}$ is coercive, i.e., there exist constants $c, C$, 
\begin{equation*}
\mathcal{E}(u)\ge c\|u\|_{H_{0}^{1,p}}^{p}-C.
\end{equation*}
\vspace{-.5em}
\paragraph{Example 2: stationary Allen--Cahn equation}
This phase-field model  
\begin{equation}
-\epsilon^2\Delta u + (u^3 - u) = 0
\end{equation}  
describes phase separation in binary alloys, with \(\epsilon\) controlling interface width. Its double-well potential energy  
\begin{equation}
\mathcal{E}(u) = \int_\Omega \frac{\epsilon^2}{2}|\nabla u|^2 + \frac{1}{4}(u^2-1)^2~\d x \label{eq:allen_energy}
\end{equation}
forces solutions toward \(\pm1\) (pure phases) with transition zones of \(O(\epsilon)\) width. 
Also, the energy functional $\cref{eq:allen_energy}$ is coercive, i.e., there exist constants $c, C$, 
\begin{equation*}
\mathcal{E}(u)\ge c\|u\|_{H^{1}}^{2}-C.
\end{equation*}
Therefore, we can use the technique from \cref{sec:interior coercivity} to prove that the loss function $\mathcal{J}$ is coercive with respect to $a$, thereby establishing the convergence of Deep Ritz method.

\section{Proof of \texorpdfstring{\cref{thm:cong for lin}}{Theorem \ref{thm:cong for lin}}}\label{sec:proof cong for lin}
Here we provide the proof of \cref{thm:cong for lin}, with the main idea inspired by previous works \cite{GD_cong_PINN,xu2024convergence}. 
However, since we deal with more general linear operators, some calculations require greater care compared to the procedures in previous works.
Although the proof strategy is clear, the details are quite involved.
We first give a brief outline of the approach, and then rigorously justify each step through a series of lemmas.

Let us first review some notations from the main text and introduce several new ones.
We focus on the linear PDE with the following form:
\begin{equation}
    \begin{cases}
        \sum_{k=0}^{+\infty} \sum_{|\xi|=k} c_{\xi}(x) \partial^\xi u = f, & x \in \Omega, \\
        \alpha u+\beta \frac{\partial u}{\partial n} = g, & x \in \partial\Omega,\label{eq:ad lin PDE}
    \end{cases}   
\end{equation}
where the linear operator $\mathcal{L}$ satisfies \cref{def:ad lin op}, and $f, g$ are bounded continuous functions.
In the following, we assume that $\|x\|_{2}\le \frac{\sqrt{3}}{2}$ for  $x\in \overline\Omega$.

We consider a two-layer neural network of the following form,
\begin{equation*}
u_{\theta}(x) = \frac{1}{\sqrt{m}}\sum_{k=1}^{m} a_k \, \tanh(w_k^\T x+b_k).
\end{equation*}
To handle the bias term more conveniently, we consider augmenting both \( x \) and the PDE.
We define $y=(x^\T, \frac{1}{2})^\T$ for $x\in \Omega$ , then we have $\|y\|_{2}\le 1$.
For \cref{eq:ad lin PDE}, we will rewrite the equation about $y$, and for simplicity, we still use the same notation:
\begin{equation}
    \begin{cases}
        \sum_{k=0}^{+\infty} \sum_{|\xi|=k} c_{\xi}(y) \partial^\xi u = f, & y \in \Omega\times \{\frac{1}{2}\}, \\
        \alpha u+\beta \frac{\partial u}{\partial n} = g, & y \in \partial\Omega\times \{\frac{1}{2}\},\label{eq:ad lin PDE2}
    \end{cases}   
\end{equation}
where the original \(d\)-dimensional multi-index $\xi$ is augmented to \((d+1)\)-dimensional multi-index , which is still denoted as $\xi=(\xi,0)$.
And we rewrite the neural network as 
\begin{equation}
u_{\theta}(x) = \frac{1}{\sqrt{m}}\sum_{k=1}^{m} a_k \, \tanh(w_k^\T y), \label{eq:nn without bias}
\end{equation}
where $a_k\in \mathbb{R}$ and $w_k\in \mathbb{R}^{d+1}$ for $\le k\le m$.

In the framework of PINNs, we focus on the empirical risk minimization problem. 
Given training samples $\{y_{p}^{(1)}\}_{p=1}^{n_1}\subset \Omega \times \{\frac{1}{2}\}$ and  $\{y_{p}^{(2)}\}_{p=1}^{n_2}\subset \partial\Omega \times \{\frac{1}{2}\}$, we aim to minimize the empirical loss function as follows,
\begin{equation}
    \begin{aligned}
\mathcal{J}_{\text{emp}}(\theta) &= \frac{1}{n_1} \sum_{i=1}^{n_1}\frac{1}{2} \left|\sum_{k=0}^{+\infty} \sum_{|\xi|=k} c_{\xi}(y) \partial^\xi u_\theta\left(y_i^{(1)}\right) - f\left(y_i^{(1)}\right)\right|^2 \\
&\quad  + \frac{\lambda}{n_2} \sum_{j=1}^{n_2}\frac{1}{2} \left|\alpha u_\theta\left(y_j^{(2)}\right)+\beta \frac{\partial u_\theta\left(y_j^{(2)}\right)}{\partial n} - g\left(y_j^{(2)}\right)\right|^2.  \label{eq:emp_y}
    \end{aligned}
\end{equation}
where $\theta=\{(a_k,w_k)\}_{k=1}^{m}\in \mathbb{R}^{m(d+2)}$ are all trainable parameters in \cref{eq:nn without bias}.

We consider the gradient flow training dynamics: for $1\le k\le m$
\begin{equation}
        \frac{\d w_k(t)}{\d t}=-\frac{\partial \mathcal{J}_{\text{emp}}(\theta(t))}{\partial w_k}, \quad \frac{\d a_k(t)}{\d t}=-\frac{\partial \mathcal{J}_{\text{emp}}(\theta(t))}{\partial a_k}.   \label{eq:gf_ntk}
\end{equation}
Let 
\begin{equation*}
    s_p(\theta)=\frac{1}{\sqrt{n_1}}\left(\sum_{k=0}^{+\infty} \sum_{|\xi|=k} c_{\xi}(y) \partial^\xi u_\theta\left(y_p^{(1)}\right) - f\left(y_p^{(1)}\right)\right) , \forall 1\le p\le n_1, 
\end{equation*}
and 
\begin{equation*}
    h_j(\theta)=\sqrt{\frac{\lambda}{n_2}}\left(\alpha u_\theta\left(y_j^{(2)}\right)+\beta \frac{\partial u_\theta\left(y_j^{(2)}\right)}{\partial n} - g\left(y_j^{(2)}\right)\right) , \forall 1\le j\le n_2.
\end{equation*}
Then we have
\begin{equation*}
    \mathcal{J}_{\text{emp}}(\theta)=\frac{1}{2}(\|s(\theta)\|_{2}^{2}+\|h(\theta)\|_{2}^{2}),
\end{equation*}
where vectors $s(\theta)=(s_{1}(\theta), \ldots, s_{n_1}(\theta))^\T$ and $h(\theta)=(h_{1}(\theta), \ldots, h_{n_2}(\theta))^\T$. 
Therefore, for $1\le k\le m$, 
\begin{align*}
\frac{\d w_k}{\d t} &= - \frac{\partial \mathcal{J}_{\text{emp}}(\theta)}{\partial w_k} \\
&= - \sum_{p=1}^{n_1} s_p(\theta) \cdot \frac{\partial s_p(\theta)}{\partial w_k}
    - \sum_{k=1}^{n_2} h_k(\theta) \cdot \frac{\partial h_k(\theta)}{\partial w_k},
\end{align*}
and
\begin{align*}
\frac{\d a_k}{\d t} &= -\frac{\partial \mathcal{J}_{\text{emp}}(\theta)}{\partial a_k} \\
&= - \sum_{p=1}^{n_1} s_p(\theta) \cdot \frac{\partial s_p(\theta)}{\partial a_k}
   - \sum_{k=1}^{n_2} h_k(\theta) \cdot \frac{\partial h_k(\theta)}{\partial a_k} .
\end{align*}
Using the chain rule, after simple computation, we can derive the following dynamics:
\begin{equation}
\frac{\d}{\d t}
\begin{bmatrix}
s(\theta) \\
h(\theta)
\end{bmatrix}
= -\left( \mathbf{G}(\theta) + \widetilde{\mathbf{G}}(\theta) \right)
\begin{bmatrix}
s(\theta) \\
h(\theta)
\end{bmatrix},
\end{equation}
where $\mathbf{G}(\theta)$ and $\mathbf{\widetilde{G}}(\theta)$ are the Gram matrices for the dynamics, defined as
\begin{align}
\mathbf{G}(\theta) &= \mathbf{D}^{\top} \mathbf{D}, \quad
\mathbf{D} = \begin{bmatrix}
\frac{\partial s_1}{\partial \mathbf{W}} & \cdots & \frac{\partial s_{n_1}}{\partial \mathbf{W}} & \frac{\partial h_1}{\partial \mathbf{W}} & \cdots & \frac{\partial h_{n_2}}{\partial \mathbf{W}} 
\end{bmatrix},
\end{align}
where $\mathbf{W}=(w_1^\T,\ldots, w_m^\T)^\T$, 
and 
\begin{align}
\widetilde{\mathbf{G}}(\theta) &= \widetilde{\mathbf{D}}^{\top} \widetilde{\mathbf{D}}, \quad
\widetilde{\mathbf{D}} = \begin{bmatrix}
\frac{\partial s_1}{\partial \mathbf{a}} & \cdots & \frac{\partial s_{n_1}}{\partial \mathbf{a}} & \frac{\partial h_1}{\partial \mathbf{a}} & \cdots & \frac{\partial h_{n_2}}{\partial \mathbf{a}}
\end{bmatrix},
\end{align}
where $\mathbf{a}=(a_1, \ldots, a_m)^\T$.
Moreover, we rewrite $\theta=(\mathbf{W},\mathbf{a})$ and define 
\begin{equation*}
    \mathbf{G}^{\infty}=\mathbb{E}_{\mathbf{W}\sim \mathcal{N}(0,\mathbf{I}), \mathbf{a}\sim\text{Unif}(\{-1,1\}^{m})}\mathbf{G}(\mathbf{W},\mathbf{a})
\end{equation*}
and
\begin{equation*}
\mathbf{\widetilde{G}}^{\infty}=\mathbb{E}_{\mathbf{W}\sim \mathcal{N}(0,\mathbf{I}), \mathbf{a}\sim\text{Unif}(\{-1,1\}^{m})}\mathbf{\widetilde{G}}(\mathbf{W},\mathbf{a}) .
\end{equation*}
Now that we have established all the basic definitions, we will first outline our proof strategy.
\paragraph{Proof sketch:}
\begin{enumerate}
    \item[(i)] To prove that the expectation of the Gram matrices $\mathbf{G}^{\infty}, \mathbf{\widetilde{G}}^{\infty}$ are positive definite (\cref{lem:step1}).
    \item[(ii)] To show that, with high probability, the Gram matrix at initialization $\mathbf{G}(\mathbf{W}(0),\mathbf{a}(0))$, $\mathbf{\widetilde{G}}(\mathbf{W}(0),\mathbf{a}(0))$ are close to $\mathbf{G}^{\infty}, \mathbf{\widetilde{G}}^{\infty}$ respectively, thereby implying that the Gram matrix $\mathbf{G}(\mathbf{W}(0),\mathbf{a}(0)), \mathbf{\widetilde{G}}(\mathbf{W}(0),\mathbf{a}(0))$ are positive definite with high probability (\cref{lem:step2}).
    \item[(iii)] To prove that the Gram matrix $\mathbf{G}(\mathbf{W},\mathbf{a})$, $\widetilde{\mathbf{G}}(\mathbf{W},\mathbf{a})$ are stable with respect to $\mathbf{W}$ and $\mathbf{a}$, that is, if the parameters are perturbed slightly, the corresponding Gram matrix remains close to the original (\cref{lem:step3}). 
    \item[(iv)] To prove that, during the evolution by gradient flow \cref{eq:gf_ntk}, the parameters do not change much. Combining this with the previous three results, we know that the Gram matrix $\mathbf{G}(\mathbf{W}(t),\mathbf{a}(t))$, $\widetilde{\mathbf{G}}(\mathbf{W}(t),\mathbf{a}(t))$ remain positive definite with high probability throughout the evolution, and we can estimate its minimal eigenvalue. This allows us to prove that the loss decreases at a certain rate.
\end{enumerate}
Next, we will carry out the above proof strategy step by step through a series of lemmas.
\begin{lemma}[Positive definiteness of $\mathbf{G}^{\infty}, \mathbf{\widetilde{G}}^{\infty}$] \label{lem:step1}
    The expectation of the Gram matrices $\mathbf{G}^{\infty}, \mathbf{\widetilde{G}}^{\infty}$ are positive definite.
\end{lemma}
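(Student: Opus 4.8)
The plan is to reduce the positive definiteness of each expected Gram matrix to a statement about linear independence of a suitable family of functions on a ball, and then invoke the linear-independence machinery already available from \cref{lemma:tanh} (together with the analyticity of $\tanh$). Since $\mathbf{G}^{\infty}=\mathbb{E}\,\mathbf{D}^\top\mathbf{D}$ is automatically positive semidefinite, it suffices to show it has trivial kernel; the same applies to $\widetilde{\mathbf{G}}^{\infty}$. I will write out the argument for $\widetilde{\mathbf{G}}^{\infty}$ first (it is the cleaner case, since $\partial s_p/\partial a_k$ and $\partial h_j/\partial a_k$ are explicit single-neuron quantities not multiplied by any $a_k$), and then indicate the modifications for $\mathbf{G}^{\infty}$.

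First I would compute the relevant gradients. For the outer weights, $\tfrac{\partial}{\partial a_k}u_\theta(y)=\tfrac{1}{\sqrt m}\tanh(w_k^\T y)$, so $\tfrac{\partial s_p}{\partial a_k}=\tfrac{1}{\sqrt{n_1 m}}\big(\mathcal{L}\tanh\big)(w_k; y_p^{(1)})$ where $\mathcal{L}$ acts in the $y$-variable on the single-neuron function $y\mapsto\tanh(w_k^\T y)$, and similarly $\tfrac{\partial h_j}{\partial a_k}=\sqrt{\tfrac{\lambda}{n_2 m}}\big(\alpha\tanh(w_k^\T y_j^{(2)})+\beta\,w_{k,d}\tanh'(w_k^\T y_j^{(2)})\big)$. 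Taking the expectation over $a_k\sim\mathrm{Unif}\{-1,1\}$ (which only contributes a factor $1$ in $\widetilde{\mathbf{G}}^{\infty}$) and over $\mathbf{W}\sim\mathcal N(0,\mathbf I)$, the quadratic form $v^\top\widetilde{\mathbf{G}}^{\infty}v$ for $v=(v^{(1)},v^{(2)})\in\mathbb{R}^{n_1+n_2}$ becomes $\tfrac1m\sum_{k=1}^m\mathbb{E}_{w\sim\mathcal N(0,\mathbf I)}\big[\Phi_v(w)^2\big]$, where $\Phi_v(w):=\sum_{p}\tfrac{v^{(1)}_p}{\sqrt{n_1}}\,(\mathcal{L}\tanh(w^\T\cdot))(y_p^{(1)})+\sum_j\sqrt{\tfrac{\lambda}{n_2}}\,v^{(2)}_j\,\mathcal{B}\tanh(w^\T\cdot)(y_j^{(2)})$. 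Thus $v^\top\widetilde{\mathbf{G}}^{\infty}v=0$ forces $\Phi_v(w)=0$ for Lebesgue-almost every $w$, hence (by continuity in $w$) for all $w$. The task is then to show this implies $v=0$, i.e. that the family of functions of $w$ indexed by the $n_1+n_2$ data points is linearly independent.

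To finish, I would argue as follows. Apply the operator $\mathcal L$ to $y\mapsto\tanh(w^\T y)$: since $\partial^\xi_y\tanh(w^\T y)=w^\xi\,\tanh^{(|\xi|)}(w^\T y)$, one gets $(\mathcal L\tanh(w^\T\cdot))(y_p^{(1)})=\sum_{\xi}c_\xi(y_p^{(1)})\,w^\xi\,\tanh^{(|\xi|)}(w^\T y_p^{(1)})$, a finite sum because only finitely many $c_\xi$ are nonzero; and $\mathcal B\tanh(w^\T\cdot)(y_j^{(2)})=\alpha\tanh(w^\T y_j^{(2)})+\beta(w\cdot n_j)\tanh'(w^\T y_j^{(2)})$. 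Now restrict $w$ to the ray $w=r\,e$ with $e$ a fixed unit vector in $\mathbb R^{d+1}$ chosen generically, so that the scalars $e^\T y_p^{(1)}$ and $e^\T y_j^{(2)}$ are pairwise distinct and nonzero (possible because the $y$'s are distinct and we augmented the last coordinate to $1/2\ne0$; the bad set of $e$ lies in finitely many hyperplanes). Then $r\mapsto\Phi_v(re)$ is a finite linear combination of terms $r^{|\xi|}\tanh^{(k)}(\mu r)$ and $\tanh(\nu r)$, $\tanh'(\nu r)$ with distinct rates $\mu,\nu$; using the exponential-polynomial structure of these functions (the same substitution $\tanh=\frac{e^{2t}-1}{e^{2t}+1}$, $\tanh'=\frac{4e^{2t}}{(e^{2t}+1)^2}$, and clearing denominators as in the proof of \cref{lemma:tanh}) one reads off that the coefficient attached to the \emph{highest}-order term forces the corresponding $v$-component to vanish, and peeling off leads inductively to $v=0$. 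The exact same computation, now with the extra factor $\mathbb{E}_{a\sim\mathrm{Unif}\{-1,1\}}[a_k^2]=1$ absorbed and the single-neuron function $\tanh(w_k^\T y)$ replaced by $a_k\tanh(w_k^\T y)$ whose $w_k$-gradient is $a_k\,y\,\tanh'(w_k^\T y)$ and (through $\mathcal L$) its derivatives, handles $\mathbf{G}^{\infty}$; here the relevant single-variable functions are of the form $r^{|\xi|}\tanh^{(k)}(\mu r)$ times components of $e$, and the leading-order / highest-rate bookkeeping is identical.

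I expect the main obstacle to be the last step: tracking, for a general admissible operator $\mathcal L$ of order $|\tilde\xi|$, which exponential-polynomial term is dominant as $r\to\infty$ and verifying that its coefficient genuinely isolates a single $v$-component rather than a degenerate combination. This is where the maximality of $\tilde\xi$ in \cref{def:ad lin op} is essential — it guarantees a unique top-order derivative $\tanh^{(|\tilde\xi|)}$ that is not cancelled — and where the interior/boundary terms must be shown not to interfere (the boundary terms involve only $\tanh$ and $\tanh'$, hence strictly lower "derivative order" than any genuine high-order interior contribution, so they are handled separately at the end of the induction, exactly as the $\beta$-term is handled in \cref{lemma:tanh}). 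The rest — computing gradients, reducing to almost-every $w$, and the genericity of $e$ — is routine.
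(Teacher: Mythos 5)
Your proposal establishes the lemma by a genuinely different route from the paper. Both proofs begin the same way — reduce positive definiteness of the expected Gram matrix to linear independence in $L^2(\mathcal{N}(0,\mathbf{I}))$ of the functions of $w$ indexed by the collocation points, then pass from "$=0$ a.e.\ in $w$" to "$=0$ for all $w$" by analyticity — but they diverge at the decoupling step. The paper differentiates the identity $l-1 = n_1+n_2-1+d|\tilde\xi|$ times in $w$ so that the tensor powers $y_p^{(1),\otimes(n_1+n_2-1)},\dots,y_j^{(2),\otimes(n_1+n_2-1)}$ factor out, invokes the (imported) tensor-power linear-independence lemma to decouple the $y$-indexed coefficients, and then argues that each scalar coefficient is not identically zero by examining its behaviour as $\|w\|\to\infty$ using the unique top-order index $\tilde\xi$. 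Your proof instead achieves the $y$-decoupling by restricting to a generic ray $w=re$ (so the scalar rates $e^\top y$ become pairwise distinct) and then appealing to the exponential-polynomial structure in the one-variable $r$, in the spirit of the proof of \cref{lemma:tanh}. What the ray argument buys is that it replaces the high-order tensor calculus and the external tensor-independence lemma by an entirely one-dimensional computation; what the paper's route buys is that the $y$-decoupling is automatic at the tensor level, so one never has to worry about arithmetic coincidences among sums of the rates $\mu_p,\nu_j$.

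The gap you flag at the end is real, and it is precisely the place where the ray reduction is more delicate than the paper's tensor-power reduction. After clearing denominators, $\Phi_v(re)$ becomes a combination of monomials $r^\ell e^{\sigma r}$ with $\sigma$ a non-negative integer combination of the base rates $2\mu_p, 2\nu_j$, and — unlike in \cref{lemma:tanh}, where only $\tanh$ and $\tanh'$ appear and only a unique minimal rate needs to be inspected — the higher derivatives $\tanh^{(k)}$ and the polynomial factors $r^{|\xi|}$ mean that several different $(p,\xi)$ pairs can feed the same exponent $\sigma$, and additive coincidences among the $\mu_p,\nu_j$ (e.g.\ $\mu_1+\mu_3=2\mu_2$) are not excluded by a generic choice of $e$ alone because the $y$'s are fixed data. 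A cleaner way to close this step, which stays within your framework, is to \emph{first} extract the coefficient of $r^{|\tilde\xi|}$ (the boundary terms and all interior terms with $|\xi|<|\tilde\xi|$ contribute strictly lower polynomial degree, so they drop out), which reduces the identity to $\sum_p c_p\,c_{\tilde\xi}(y_p^{(1)})\,e^{\tilde\xi}\,\tanh^{(|\tilde\xi|)}(\mu_p r)=0$ for all $r$; since this involves a single derivative order and distinct rates $\mu_p$, a Vandermonde-type exponential argument on the minimal-rate coefficient, exactly as in the proof of \cref{lemma:tanh}, gives $c_p=0$ for all $p$ (using implicitly that $c_{\tilde\xi}(y_p^{(1)})\neq0$ — an assumption the paper's proof also relies on at the $w\to\infty$ step, although \cref{def:ad lin op} only says $c_{\tilde\xi}\not\equiv0$). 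Once the interior coefficients are gone, the boundary part reduces verbatim to \cref{lemma:tanh}. With that amendment your proposal closes; as written, the final bookkeeping step is not yet a proof, and you correctly identify it as the obstacle.

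One small slip that does not affect the argument: you write $\tfrac{\partial s_p}{\partial a_k}=\tfrac{1}{\sqrt{n_1m}}(\mathcal{L}\tanh)(w_k;y_p^{(1)})$ without an $a_k$ prefactor, which is in fact correct (and the paper's own statement of this derivative carries a spurious $a_k$), so your computation is the right one.
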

\begin{proof} \textbf{Part 1:} we first prove the positive definiteness of $\mathbf{G}^{\infty}$.

We denote \(\varphi(y; w) = \sum_{k=0}^{+\infty} \sum_{|\xi|=k} c_{\xi}(y) \tanh^{(|\xi|)}(w^\T y)w^{\xi}\), where \(w  \in \mathbb{R}^{d+1}\). Then,
\begin{equation*}
    \frac{\partial s_p}{\partial w_k} = \frac{1}{\sqrt{n_1}} \frac{a_k}{\sqrt{m}}\frac{\partial \varphi(y_p^{(1)};w_k)}{\partial w}.
\end{equation*}

Similarly, let \(\psi(y; w) = \alpha \tanh(w^T y)+\beta \tanh'(w^\T y)w^\T n(y)\), 
where $n(y)$ is the outer normal direction on the point $y\in \partial\Omega \times \{\frac{1}{2}\}$.
Then
\begin{equation*}
\frac{\partial h_j}{\partial w_k} = \frac{1}{\sqrt{n_2}} \frac{a_k}{\sqrt{m}}\frac{\partial \psi(y_j^{(2)};w_k)}{\partial w}.
\end{equation*}

With these notations, we deduce that
\[
G^{\infty}_{p,j} =
\begin{cases}
\frac{1}{n_1} \mathbb{E}_{w \sim \mathcal{N}(0, \mathbf{I}) } \left\langle \frac{\partial \varphi(y_p^{(1)};w)}{\partial w}, \frac{\partial \varphi(y_j^{(1)};w)}{\partial w} \right\rangle, & 1 \leq p \leq n_1,\, 1 \leq j \leq n_1, \\
\frac{1}{\sqrt{n_1 n_2}} \mathbb{E}_{w \sim \mathcal{N}(0, I)} \left\langle \frac{\partial \varphi(y_p^{(1)};w)}{\partial w}, \frac{\partial \psi(y_j^{(2)};w)}{\partial w} \right\rangle, & 1 \leq p \leq n_1,\, n_1 + 1 \leq j \leq n_1 + n_2, \\
\frac{1}{n_2} \mathbb{E}_{w \sim \mathcal{N}(0, I)} \left\langle \frac{\partial \psi(y_p^{(2)};w)}{\partial w}, \frac{\partial \psi(y_p^{(2)};w)}{\partial w} \right\rangle, & n_1 + 1 \leq p, j \leq n_1 + n_2,
\end{cases}
\]
where \(G^{\infty}_{p, j}\) denotes the \((p, j)\)-th entry of \(\mathbf{G}^{\infty}\).

To prove this lemma, we need tools from functional analysis. 
Let \(\mathcal{H}\) be a Hilbert space of integrable \((d+1)\)-dimensional vector fields on \(\mathbb{R}^{d+1}\), i.e., \(f \in \mathcal{H}\) if \(\mathbb{E}_{w \sim \mathcal{N}(0, I)}[\|f(w)\|_2^2] < \infty\).
The inner product for any two elements \(f, g \in \mathcal{H}\) is \(\mathbb{E}_{w \sim \mathcal{N}(0, \mathbf{I})}[\langle f(w), g(w) \rangle]\). Thus, to show that \(\mathbf{G}^{\infty}\) is strictly positive definite, it suffices to demonstrate that
\begin{equation*}
\frac{\partial \varphi(y_1^{(1)}; w)}{\partial w}, \ldots, \frac{\partial \varphi(y_{n_1}^{(1)}; w)}{\partial w}, 
\frac{\partial \psi(y_1^{(2)}; w)}{\partial w}, \ldots, \frac{\partial \psi(y_{n_2}^{(2)}; w)}{\partial w} \in \mathcal{H}
\end{equation*}
are linearly independent. 
Suppose there exist coefficients \(c_1^{(1)}, \ldots,c_{n_1}^{(1)}, c_1^{(2)}, \ldots, c_{n_2}^{(2)} \in \mathbb{R}\) such that
\begin{equation*}
c_1^{(1)} \frac{\partial \varphi(y_1^{(1)}; w)}{\partial w} + \cdots + c_{n_1}^{(1)} \frac{\partial \varphi(y_{n_1}^{(1)}; w)}{\partial w}
+ c_1^{(2)} \frac{\partial \psi(y_1^{(2)}; w)}{\partial w} + \cdots + c_{n_2}^{(2)}\frac{\partial \psi(y_{n_2}^{(2)}; w)}{\partial w}
= 0 \text{ in } \mathcal{H}.
\end{equation*}
This implies that
\begin{equation}
   c_1^{(1)} \frac{\partial \varphi(y_1^{(1)}; w)}{\partial w} + \cdots + c_{n_1}^{(1)} \frac{\partial \varphi(y_{n_1}^{(1)}; w)}{\partial w}
+ c_1^{(2)} \frac{\partial \psi(y_1^{(2)}; w)}{\partial w} + \cdots + c_{n_2}^{(2)}\frac{\partial \psi(y_{n_2}^{(2)}; w)}{\partial w}
= 0 \label{eq:lin_com_step1}
\end{equation}
for all \( w \in \mathbb{R}^{d+1} \).

We first compute the derivatives of \( \varphi \) and \( \psi \).
Differentiating \( \psi(y; w) \) \( l \) times with respect to \( w \), we have
\begin{equation*}
    \frac{\partial^l \psi(y; w)}{\partial w^l} = \alpha \tanh^{(l)}(w^T y) y^{\otimes (l)}+\beta \sum_{s=0}^{l}\tanh^{(l-s+1)}(w^\T y)y^{\otimes (l-s)}\otimes \frac{\partial^s w^\T n(y)}{\partial w^s},
\end{equation*}
where \( \otimes \) denotes the tensor product.

Differentiating \( \varphi(y; w) \) \( l \) times with respect to \( w \), similar to the Leibniz rule for the \( l \)-th derivative of the product of two scalar functions, we obtain
\begin{equation*}
    \frac{\partial^l \varphi(y; w)}{\partial w^l} = \sum_{k}\sum_{|\xi|=k}c_{\xi}(y)\sum_{s=0}^{l}\tanh^{(l-s+|\xi|)}(w^\T y)y^{\otimes (l-s)}\otimes \frac{\partial^s w^\alpha}{\partial w^s} .
\end{equation*}

Note that,  the set
\begin{equation*}
    y_1^{(1), \otimes (n_1 + n_2 - 1)}, \ldots, y_{n_1}^{(1), \otimes (n_1 + n_2 - 1)}, 
    y_1^{(2), \otimes (n_1 + n_2 - 1)}, \ldots, y_{n_2}^{(2), \otimes (n_1 + n_2 - 1)}
\end{equation*}
is independent (see Lemma G.6 in \cite{gd_cong2}). 

This observation motivates us to differentiate both sides of \cref{eq:lin_com_step1} exactly \( l-1=n_1 +n_2 -1 + d|\tilde{\xi}| \) times for \( w \), where $\tilde{\xi}$ is defined in \cref{def:ad lin op}. 
Thus, we have
\begin{equation*}
   c_1^{(1)} \frac{\partial^l \varphi(y_1^{(1)}; w)}{\partial w^l} + \cdots + c_{n_1}^{(1)} \frac{\partial^l \varphi(y_{n_1}^{(1)}; w)}{\partial w^l}
+ c_1^{(2)} \frac{\partial^l \psi(y_1^{(2)}; w)}{\partial w^l} + \cdots + c_{n_2}^{(2)}\frac{\partial^l \psi(y_{n_2}^{(2)}; w)}{\partial w^l}
= 0.
\end{equation*}
By substituting the previous results into this equation, we have
\begin{equation*}
    \begin{aligned}
   &\sum_{p=1}^{n_1}c_p^{(1)}\sum_{k}\sum_{|\xi|=k}c_{\xi}(y_p^{(1)})\sum_{s=0}^{d|\tilde{\xi}|}\tanh^{(l-s+|\xi|)}(w^\T y_p^{(1)})y_p^{(1),\otimes (l-s)}\otimes \frac{\partial^s w^\alpha}{\partial w^s}+\sum_{j=1}^{n_2}c_j^{(2)}\\
&  \left[\alpha \tanh^{(l)}(w^T y_j^{(2)}) y_j^{(2), \otimes (l)}+\beta \sum_{s=0}^{d}\tanh^{(l-s+1)}(w^\T y_j^{(2)})y_j^{(2), \otimes (l-s)}\otimes \frac{\partial^s w^\T n(y_j^{(2)})}{\partial w^s}\right]
= 0,
    \end{aligned}
\end{equation*}
where some higher-order derivative terms naturally vanish, so we have omitted them from the expression.
Reorganizing the above equality as a linear combination in terms of 
\begin{equation*}
    y_1^{(1), \otimes (n_1 + n_2 - 1)}, \ldots, y_{n_1}^{(1), \otimes (n_1 + n_2 - 1)}, 
    y_1^{(2), \otimes (n_1 + n_2 - 1)}, \ldots, y_{n_2}^{(2), \otimes (n_1 + n_2 - 1)},
\end{equation*}
we explicitly list the coefficient in front of each term as follows:
\begin{equation}
    \left[\sum_{s=0}^{d|\tilde{\xi}|}\sum_{k}\sum_{|\xi|=k}c_{\xi}(y_p^{(1)})\tanh^{(l-s+|\xi|)}(w^\T y_p^{(1)})y_p^{(1),\otimes (l-s)}\otimes \frac{\partial^s w^\alpha}{\partial w^s}\right]c_p^{(1)}=0 , \ \forall 1\le p\le n_1, \label{eq:c_1}
\end{equation}
and for $1\le j\le n_2$,
\begin{equation}
    \left[\alpha \tanh^{(l)}(w^T y_j^{(2)}) y_j^{(2), \otimes (l)}+\beta \sum_{s=0}^{d}\tanh^{(l-s+1)}(w^\T y_j^{(2)})y_j^{(2), \otimes (l-s)}\otimes \frac{\partial^s w^\T n(y_j^{(2)})}{\partial w^s}\right]c_j^{(2)}=0. \label{eq:c_2}
\end{equation}
Note that under \cref{def:ad lin op}, the term inside the braces $[]$ has a leading order. Therefore, as \( w \) approaches infinity, the term in the brackets will not vanish. As a result, we can obtain
\begin{equation*}
    c_p^{(1)}=0, \ c_{j}^{(2)}=0, \quad \forall 1\le p\le n_1,\ 1\le j\le n_2.
\end{equation*}
So we can obtain that
\begin{equation*}
\frac{\partial \varphi(y_1^{(1)}; w)}{\partial w}, \ldots, \frac{\partial \varphi(y_{n_1}^{(1)}; w)}{\partial w}, 
\frac{\partial \psi(y_1^{(2)}; w)}{\partial w}, \ldots, \frac{\partial \psi(y_{n_2}^{(2)}; w)}{\partial w} \in \mathcal{H}
\end{equation*}
are linearly independent. And $\mathbf{G}^{\infty}$ is positive definite.

\textbf{Part 2:} we prove that $\mathbf{\widetilde{G}}^{\infty}$ is positive definite.
Note that
\begin{equation*}
    \frac{\partial s_p}{\partial a_k} = \frac{1}{\sqrt{n_1}} \frac{a_k}{\sqrt{m}} \varphi(y_p^{(1)};w_k),\ \text{and} \ \ \frac{\partial h_j}{\partial a_k} = \frac{1}{\sqrt{n_2}} \frac{a_k}{\sqrt{m}}\psi(y_j^{(2)};w_k).
\end{equation*}
Therefore, the subsequent proof proceeds in the same way as before.
\end{proof}

\begin{lemma}\label{lem:step2}
Define $\lambda_0, \tilde{\lambda}_0$ to be the minimal eigenvalue of $\mathbf{G}^{\infty}, \widetilde{\mathbf{G}}^{\infty}$ respectively.
If $m=\Omega\left(\frac{d^{2|\tilde{\xi}|}}{\min\{\lambda_0^2, \tilde\lambda_0^2\}}\log\left(\frac{n_1+n_2}{\delta}\right)\right)$, then with probability at least $1 - \delta$, we have
\begin{equation*}
\| \mathbf{G}(0) - \mathbf{G}^{\infty} \|_2 \leq \frac{\lambda_0}{4} 
\quad \text{and} \quad 
\| \widetilde{\mathbf{G}}(0) - \widetilde{\mathbf{G}}^{\infty} \|_2 \leq \frac{\tilde{\lambda}_0}{4}. 
\end{equation*}
\end{lemma}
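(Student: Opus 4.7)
\textbf{Plan for the proof of \cref{lem:step2}.} The proof proceeds by a standard concentration-plus-union-bound argument, applied entrywise to the Gram matrices, followed by a conversion to spectral norm. First I would write each entry of $\mathbf{G}(0)$ as an empirical average. For instance, for $1 \le p, j \le n_1$,
\begin{equation*}
\mathbf{G}(0)_{p,j} \;=\; \frac{1}{n_1 m}\sum_{k=1}^{m} a_k^2 \,\Big\langle \tfrac{\partial \varphi(y_p^{(1)};w_k)}{\partial w},\, \tfrac{\partial \varphi(y_j^{(1)};w_k)}{\partial w}\Big\rangle,
\end{equation*}
and since $a_k^2 = 1$, this is an unbiased $m$-sample average of $\mathbf{G}^{\infty}_{p,j}$. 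The other three entry types ($\varphi,\psi$ cross-blocks and the $\psi,\psi$ block) and the analogous expressions for $\widetilde{\mathbf{G}}(0)$ (which involve the simpler quantities $\varphi(y;w_k)\varphi(y;w_k)/(n_1 m)$ and likewise for $\psi$) have the same i.i.d.-average structure.

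Second, I would bound the summands. Each term is a sum (over the finitely many multi-indices $\xi$ with $c_\xi \neq 0$) of products of the form $\tanh^{(s)}(w_k^\top y)\, w_k^{\xi'}$ for some $|\xi'|\le |\tilde\xi|$, since all derivatives of $\tanh$ are uniformly bounded by $1$ and $\|y\|_2 \le 1$. The only unbounded factor is $w_k^{\xi'}$. Using $w_k \sim \mathcal{N}(0,\mathbf{I}_{d+1})$, a standard Gaussian tail bound and union over $k=1,\dots,m$ gives
\begin{equation*}
\Pr\Big[\max_{1\le k\le m}\|w_k\|_\infty \le C\sqrt{\log(m(d{+}1)/\delta)}\Big] \ge 1-\delta/2,
\end{equation*}
on which each summand is deterministically bounded by $M := C'\, d^{|\tilde\xi|}\big(\log(m(d{+}1)/\delta)\big)^{|\tilde\xi|}$ (the factor $d^{|\tilde\xi|}$ comes from bounding a single $w^{\xi'}$ factor, and the inner products contribute an additional factor of $d$, absorbed into the constant). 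Conditioning on this event and applying Hoeffding's inequality to each entry yields, for fixed $(p,j)$,
\begin{equation*}
\Pr\!\left[\big|\mathbf{G}(0)_{p,j}-\mathbf{G}^{\infty}_{p,j}\big| > t\right] \;\le\; 2\exp\!\left(-\frac{c\, m\, t^2}{M^2}\right),
\end{equation*}
and analogously for $\widetilde{\mathbf{G}}(0)$.

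Third, I would apply a union bound over the $(n_1+n_2)^2$ entries of both matrices, choosing $t = \min\{\lambda_0,\tilde\lambda_0\}/(4(n_1+n_2))$ so that, via the trivial bound $\|M\|_2 \le (n_1+n_2)\|M\|_{\max}$, both spectral deviations $\|\mathbf{G}(0)-\mathbf{G}^\infty\|_2$ and $\|\widetilde{\mathbf{G}}(0)-\widetilde{\mathbf{G}}^\infty\|_2$ fall below the required thresholds. Solving the exponent for $m$ and absorbing logarithmic factors into the $\widetilde\Omega$ notation produces the stated width condition $m=\Omega\!\left(\frac{d^{2|\tilde\xi|}}{\min\{\lambda_0^2,\tilde\lambda_0^2\}}\log\frac{n_1+n_2}{\delta}\right)$ (with additional $\log$ factors that are suppressed).

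\textbf{Main obstacle.} The delicate point is obtaining the precise polynomial-in-$d$ dependence encoded by $d^{2|\tilde\xi|}$. Because the derivative $\partial \varphi/\partial w$ contains Gaussian polynomial factors $w^{\xi'}$ of degree up to $|\tilde\xi|$, the summands are sub-Weibull rather than sub-Gaussian; a naive $L^\infty$ truncation is acceptable but must be performed carefully so that (i) the truncation event has probability at least $1-\delta/2$ uniformly in $k$, and (ii) the resulting bound $M$ only contributes polylogarithmic extras and a clean $d^{|\tilde\xi|}$. One could alternatively invoke a Bernstein-type bound for sub-Weibull variables to avoid truncation, but the truncation-and-Hoeffding route is the most transparent. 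All other steps (unbiasedness, union bound, entry-to-spectral conversion) are routine.
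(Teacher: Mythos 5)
Your approach — write each entry of the Gram matrix as an $m$-sample i.i.d. average, concentrate each entry, then convert to spectral norm via a union bound — is the same skeleton as the paper's proof, but the concentration step is handled differently. The paper treats the summands $X_r(ij) \lesssim 1 + \|w_r(0)\|_2^{2|\tilde\xi|}$ directly as sub-Weibull random variables of order $1/|\tilde\xi|$, bounds $\|X_r(ij)\|_{\psi_{1/|\tilde\xi|}} \lesssim d^{|\tilde\xi|}$ via the chain $\bigl\|\|w\|_2^{2|\tilde\xi|}\bigr\|_{\psi_{1/|\tilde\xi|}} \lesssim \bigl\|\|w\|_2^2\bigr\|_{\psi_1}^{|\tilde\xi|} \lesssim d^{|\tilde\xi|}$, and then applies a ready-made sub-Weibull concentration inequality (Eq.~(\ref{eq:consen_ineq})) to get a deviation $\lesssim \frac{d^{|\tilde\xi|}}{\sqrt m}\sqrt{\log\frac{1}{\delta}} + \frac{d^{|\tilde\xi|}}{m}\bigl(\log\frac{1}{\delta}\bigr)^{|\tilde\xi|}$, whose sub-Gaussian first term dominates once $m$ is moderately large; the entry-to-spectral conversion is then done via $\|\cdot\|_2 \le \|\cdot\|_F$ rather than via $\|\cdot\|_{\max}$, but the two are equivalent up to constants here. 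You instead truncate $\|w_k\|_\infty$ at level $\Theta(\sqrt{\log(m(d{+}1)/\delta)})$ and apply Hoeffding on the truncated event. That is legitimate and (as you say) arguably more elementary, but it buys you less: your per-sample bound $M$ already carries a factor $(\log(m(d{+}1)/\delta))^{|\tilde\xi|}$, so solving $M^2\log((n_1{+}n_2)/\delta)/m \lesssim \min\{\lambda_0,\tilde\lambda_0\}^2$ for $m$ produces an implicit inequality with extra polylogarithmic factors of order $(\log m)^{2|\tilde\xi|}$ on top of the stated requirement. The lemma as stated uses $\Omega(\cdot)$, not $\widetilde\Omega(\cdot)$, so the truncation route only reproduces the result up to logs; the sub-Weibull route is what gives the cleaner constant-in-$\log$ form.

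There is also a small but genuine gap in the truncation step: after conditioning on the event $E = \{\max_k\|w_k\|_\infty \le C\sqrt{\log(\cdot)}\}$, the conditional mean $\mathbb{E}[\mathbf{G}(0)_{p,j}\mid E]$ is no longer $\mathbf{G}^\infty_{p,j}$, so Hoeffding on the conditional law does not directly bound $|\mathbf{G}(0)_{p,j} - \mathbf{G}^\infty_{p,j}|$. You need to either (a) split the summand into a truncated piece and a tail piece, bound the tail's expected contribution by a Gaussian-moment integral and show it is $o(\lambda_0/(n_1{+}n_2))$, or (b) use a one-sided inclusion $\{|\mathbf{G}(0)_{p,j}-\mathbf{G}^\infty_{p,j}|>t\}\cap E \subseteq \{|\text{truncated average} - \mathbb{E}[\text{truncated}]|>t-\text{bias}\}$ and control the bias separately. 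This is standard but should be stated; as written, the step ``conditioning on this event and applying Hoeffding's inequality\dots yields $\Pr[|\mathbf{G}(0)_{p,j}-\mathbf{G}^\infty_{p,j}|>t]\le 2\exp(-cmt^2/M^2)$'' is not literally correct.
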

To prove this lemma, we need to make some preliminary preparations.

Let \(g\) be a non-decreasing function with \(g(0) = 0\). The \(g\)-Orlicz norm of a real-valued random variable \(X\) is defined as
\begin{equation*}
    \|X\|_g := \inf \left\{ t > 0 : \mathbb{E} \left[ g \left( \frac{|X|}{t} \right) \right] \leq 1 \right\}.
\end{equation*}
A random variable \(X\) is said to be sub-Weibull of order \(\alpha > 0\), denoted as sub-Weibull\((\alpha)\), if \(\|X\|_{\psi_\alpha} < \infty\), where
\begin{equation*}
    \psi_\alpha(x) := e^{x^\alpha} - 1, \quad \text{for } x \geq 0.
\end{equation*}
The following result is a commonly used inequality in mathematical fields.

If \(X_1, \cdots, X_n\) are independent mean zero random variables with \(\|X_i\|_{\psi_\alpha}<\infty\) for all \(1 \leq i \leq n\) and some \(\alpha > 0\), then for any vector \(a = (a_1, \cdots, a_n) \in \mathbb{R}^n\), the following holds true:
\begin{equation}
    P\left(\left|\sum_{i=1}^{n} a_i X_i\right| \geq 2\mathrm{e}C(\alpha)\|b\|_2\sqrt{t} + 2\mathrm{e}L_n^{*}(\alpha)t^{1/\alpha}\|b\|_{\beta(\alpha)}\right) \leq 2\mathrm{e}^{-t},\quad \text{for all } t \geq 0,\label{eq:consen_ineq}
\end{equation}
where \(b = (a_1\|X_1\|_{\psi_\alpha}, \cdots, a_n\|X_n\|_{\psi_\alpha}) \in \mathbb{R}^n\),
\begin{equation*}
    C(\alpha) := \max\left\{\sqrt{2},~2^{1/\alpha}\right\}
    \begin{cases}
        \sqrt{8}(2\pi)^{1/4}\mathrm{e}^{1/24}(\mathrm{e}^{2/\mathrm{e}}/\alpha)^{1/\alpha}, & \text{if } \alpha < 1,\\
        4\mathrm{e}+2(\log 2)^{1/\alpha}, & \text{if } \alpha\geq 1,
    \end{cases}
\end{equation*}
and for \(\beta(\alpha) = \infty\) when \(\alpha \leq 1\) and \(\beta(\alpha) = \alpha/(\alpha-1)\) when \(\alpha > 1\),
\begin{equation*}
    L_n(\alpha) := \frac{4^{1/\alpha}}{\sqrt{2}\|b\|_2} \times
    \begin{cases}
        \|b\|_{\beta(\alpha)}, & \text{if } \alpha<1,\\
        4\mathrm{e}\|b\|_{\beta(\alpha)}/C(\alpha), & \text{if } \alpha\geq 1.
    \end{cases}
\end{equation*}
and \(L_n^{*}(\alpha) = L_n(\alpha) C(\alpha)\|b\|_2/\|b\|_{\beta(\alpha)}\).

\begin{proof}
We focus on the proof about $\mathbf{G}(0)$. 

Since \( \|\mathbf{G}(0) - \mathbf{G}^\infty\|_2 \leq \|\mathbf{G}(0) - \mathbf{G}^\infty\|_F \), it suffices to bound each entry of \( \mathbf{G}(0) - \mathbf{G}^\infty \), which is of the form
\begin{equation}
    \sum_{r=1}^m \left\langle \frac{\partial s_p}{\partial w_r}, \frac{\partial s_j}{\partial w_r} \right\rangle - \mathbb{E}_{w\sim\mathcal{N}(0,\mathbf{I}), a\sim \text{Unif}\{-1,1\}} \sum_{r=1}^m \left\langle \frac{\partial s_p}{\partial w_r}, \frac{\partial s_j}{\partial w_r} \right\rangle,\label{eq:form1}
\end{equation}

or
\begin{equation}
    \sum_{r=1}^m \left\langle \frac{\partial s_p}{\partial w_r}, \frac{\partial h_j}{\partial w_r} \right\rangle - \mathbb{E}_{w\sim\mathcal{N}(0,\mathbf{I}), a\sim \text{Unif}\{-1,1\}} \sum_{r=1}^m \left\langle \frac{\partial s_p}{\partial w_r}, \frac{\partial h_j}{\partial w_r} \right\rangle,\label{eq:form2}
\end{equation}

or
\begin{equation}
    \sum_{r=1}^m \left\langle \frac{\partial h_p}{\partial w_r}, \frac{\partial h_j}{\partial w_r} \right\rangle - \mathbb{E}_{w\sim\mathcal{N}(0,\mathbf{I}), a\sim \text{Unif}\{-1,1\}} \sum_{r=1}^m \left\langle \frac{\partial h_p}{\partial w_r}, \frac{\partial h_j}{\partial w_r} \right\rangle.\label{eq:form3}
\end{equation}

Note that
\begin{equation*}
    \frac{\partial s_p}{\partial w_r}=\frac{a_r}{\sqrt{mn_1}}\sum_{k}\sum_{|\xi|=k}c_{\xi}(y_p^{(1)})\left[\tanh^{(1+|\xi|)}(w_r^\T y_p^{(1)})w_r^{\xi}y_p^{(1)}+\tanh^{(|\xi|)}(w_r^\T y_p^{(1)})\frac{\partial w_r^\xi}{\partial w_r}\right]
\end{equation*}
and
\begin{equation*}
        \frac{\partial h_j}{\partial w_r}=\frac{a_r\sqrt{\lambda}}{\sqrt{mn_2}}\left[\alpha \tanh'(w_r^\T y_{j}^{(2)})y_{j}^{(2)}+\beta \tanh^{''}(w_r^\T y_{j}^{(2)})w_r^\T n(y_j^{(2)})+\beta \tanh'(w_r^\T y_{j}^{(2)})n(y_{j}^{(2)})\right].
\end{equation*}

For the first form \cref{eq:form1}, let
\begin{equation*}
Y_r(p) = \sum_{k}\sum_{|\xi|=k}c_{\xi}(y_p^{(1)})\left[\tanh^{(1+|\xi|)}(w_r^\T y_p^{(1)})w_r^{\xi}y_p^{(1)}+\tanh^{(|\xi|)}(w_r^\T y_p^{(1)})\frac{\partial w_r^\xi}{\partial w_r}\right], \ \forall 1\le p\le n_1
\end{equation*}
and
\begin{equation*}
    X_r(ij) = \langle Y_r(i), Y_r(j) \rangle, \quad 1\le i, j\le n_1 .
\end{equation*}
Then we have
\begin{equation*}
    \sum_{r=1}^m \left\langle \frac{\partial s_p}{\partial w_r}, \frac{\partial s_j}{\partial w_r} \right\rangle - \mathbb{E}_{w\sim\mathcal{N}(0,\mathbf{I}), a\sim \text{Unif}\{-1,1\}} \sum_{r=1}^m \left\langle \frac{\partial s_p}{\partial w_r}, \frac{\partial s_j}{\partial w_r} \right\rangle = \frac{1}{n_1 m}\sum_{r=1}^m \left(X_r(ij) - \mathbb{E} X_r(ij)\right).
\end{equation*}

Note that \( |X_r(ij)| \lesssim 1 + \|w_r(0)\|_2^{2|\tilde{\xi}|} \), thus
\begin{equation*}
    \|X_r(ij)\|_{\psi_{\frac{1}{|\tilde{\xi}|}}} \lesssim 1 + \left\|\|w_r(0)\|_{2}^{2|\tilde{\xi}|}\right\|_{\psi_{\frac{1}{|\tilde{\xi}|}}}  \lesssim 1 + \left\|\|w_r(0)\|_{2}^{2}\right\|_{\psi_{1}}^{|\tilde{\xi}|}   \lesssim d^{|\tilde{\xi}|}.
\end{equation*}

For the centered random variable, the property of \(\psi_{\frac{1}{|\tilde{\xi}|}}\) quasi-norm implies that
\begin{equation*}
\| X_r(ij) - \mathbb{E}[X_r(ij)] \|_{\psi_{\frac{1}{|\tilde{\xi}|}}} \lesssim \|X_r(ij)\|_{\psi_{\frac{1}{|\tilde{\xi}|}}} + \|\mathbb{E}[X_r(ij)]\|_{\psi_{\frac{1}{|\tilde{\xi}|}}}\lesssim d^{|\tilde{\xi}|}.
\end{equation*}

Therefore, applying \cref{eq:consen_ineq} (taking $\alpha=\frac{1}{|\tilde{\xi}|})$ yields that with probability at least \(1-\delta\),
\begin{equation*}
    \left| \frac{1}{m}\sum_{r=1}^m ( X_r(ij) - \mathbb{E}[X_r(ij)] ) \right| \lesssim \frac{d^{|\tilde{\xi}|}}{\sqrt{m}} \sqrt{\log\frac{2}{\delta}} + \frac{d^{|\tilde{\xi}|}}{m}\left(\log\frac{2}{\delta}\right)^{|\tilde{\xi}|},
\end{equation*}
which directly leads to
\begin{equation*}
    \left| \sum_{r=1}^m \left\langle \frac{\partial s_i}{\partial w_r}, \frac{\partial s_j}{\partial w_r} \right\rangle - \mathbb{E}_(w,a) \sum_{r=1}^m \left\langle \frac{\partial s_i}{\partial w_r}, \frac{\partial s_j}{\partial w_r} \right\rangle \right| \lesssim \frac{d^{|\tilde{\xi}|}}{n_1 \sqrt{m}} \sqrt{\log\frac{2}{\delta}} + \frac{d^{|\tilde{\xi}|}}{n_1 m}\left(\log\frac{2}{\delta}\right)^{|\tilde{\xi}|}.
\end{equation*}   
For the second form \cref{eq:form2} and third form \cref{eq:form3}, in a similar manner, we can obtain the same result.

Combining the results for the three forms, we can deduce that with probability at least $1-\delta$, 
\begin{equation*}
    \begin{aligned}
    \|\mathbf{G}(0)-\mathbf{G}^{\infty}\|_{2}^{2} &\le \|\mathbf{G}(0)-\mathbf{G}^{\infty}\|_{F}^{2} \\
    & \lesssim \frac{d^{2|\tilde{\xi}|}}{m} \log\frac{2(n_1 + n_2)}{\delta} + \frac{d^{2|\tilde{\xi}|}}{m^2}\left(\log\frac{2(n_1 + n_2)}{\delta}\right)^{2|\tilde{\xi}|}\\
    & \lesssim \frac{d^{2|\tilde{\xi}|}}{m} \log\frac{2(n_1 + n_2)}{\delta}.
    \end{aligned}
\end{equation*}
Thus when $ \sqrt{\frac{d^{2|\tilde{\xi}|}}{m} \log\frac{2(n_1 + n_2)}{\delta}}\lesssim \frac{\lambda_0}{4}$, i.e.,
	\begin{equation*}
     m=\Omega\left(\frac{d^{2|\tilde{\xi}|}}{\lambda_0^2}\log\left(\frac{n_1+n_2}{\delta}\right)\right),
    \end{equation*}
	we have $\lambda_{min}(\mathbf{G}(0))\geq \frac{3}{4}\lambda_0 $.
	
\end{proof}

\begin{lemma}\label{lem:step3}
	Let $R\in (0,1]$, if $w_1(0),\cdots,w_m(0)$ are i.i.d. generated from $\mathcal{N}(0,\mathbf{I_{d+1}})$, then with probability at least $1-\delta$, the following holds. 
    For any set of weight vectors $\mathbf{W}=(w_1^\T,\ldots, w_m^\T)^\T$ and $\mathbf{a}=(a_1, \cdots, a_m)^\T$ satisfying that for any $1\le r\le m$, $\|w_r-w_r(0)\|_2\leq R$ and $\|\mathbf{a}-\mathbf{a}(0)\|_{2}\leq R$,  then the induced Gram matrices $\mathbf{G}(\mathbf{W}, \mathbf{a}), \widetilde{\mathbf{G}}(\mathbf{W}, \mathbf{a})$ satisfy
	\begin{equation*}
		\|\mathbf{G}(\mathbf{W}, \mathbf{a})-\mathbf{G}(0)\|_2 \le CR\left(1+\frac{d^{|\tilde{\xi}|}}{\sqrt{m}} \sqrt{\log\frac{2}{\delta}} + \frac{d^{|\tilde{\xi}|}}{m}\left(\log\frac{2}{\delta}\right)^{|\tilde{\xi}|}\right), 
	\end{equation*}
    and
    \begin{equation*}
		\|\widetilde{\mathbf{G}}(\mathbf{W}, \mathbf{a})-\widetilde{\mathbf{G}}(0)\|_2 \le CR\left(1+\frac{d^{|\tilde{\xi}|}}{\sqrt{m}} \sqrt{\log\frac{2}{\delta}} + \frac{d^{|\tilde{\xi}|}}{m}\left(\log\frac{2}{\delta}\right)^{|\tilde{\xi}|}\right)
	\end{equation*}
	where $C$ is a universal constant.
\end{lemma}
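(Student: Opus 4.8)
The plan is to reduce the operator-norm bound to an entrywise (Frobenius) estimate, and then combine a \emph{deterministic} local Lipschitz bound for each hidden unit with the sub-Weibull concentration inequality \cref{eq:consen_ineq}, mirroring the proof of \cref{lem:step2}. Writing $\mathbf{G}(\mathbf{W},\mathbf{a})=\mathbf{D}^{\top}\mathbf{D}$, every entry of $\mathbf{G}(\mathbf{W},\mathbf{a})-\mathbf{G}(0)$ has one of the three forms \cref{eq:form1,eq:form2,eq:form3}; for instance a diagonal-block entry equals $\frac{1}{n_1 m}\sum_{r=1}^{m}\bigl(X_r^{\mathbf{W},\mathbf{a}}(pj)-X_r^{0}(pj)\bigr)$ where $X_r(pj)=a_r^2\langle Y_r(p),Y_r(j)\rangle$, with $Y_r(p)$ the $w$-gradient factor written out in the proof of \cref{lem:step2} (at initialization $a_r(0)^2=1$, so $X_r^{0}$ reduces to the $X_r$ used there). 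Since $\|\mathbf{G}(\mathbf{W},\mathbf{a})-\mathbf{G}(0)\|_2\le\|\mathbf{G}(\mathbf{W},\mathbf{a})-\mathbf{G}(0)\|_F$ and each entry carries a $1/n_1$, $1/n_2$, or $1/\sqrt{n_1 n_2}$ normalization, it suffices to bound $\bigl|X_r^{\mathbf{W},\mathbf{a}}(pj)-X_r^{0}(pj)\bigr|$ uniformly over the admissible parameter ball and then take a union bound over the $(n_1+n_2)^2$ entries.

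The deterministic step is the heart of the argument. On the ball $\|w_r-w_r(0)\|_2\le R\le 1$ one has $\|w_r\|_2\le\|w_r(0)\|_2+1$; and since $a_r(0)\in\{-1,1\}$ and $|a_r-a_r(0)|\le R$, also $|a_r|\le 2$ and $|a_r^2-a_r(0)^2|\le 3R$. Because $\tanh$ is analytic with uniformly bounded derivatives and $\|y\|_2\le 1$, each scalar building block of $Y_r(p)$ — terms of the type $c_\xi(y)\tanh^{(|\xi|+1)}(w^{\top}y)\,w^{\xi}y$ and $c_\xi(y)\tanh^{(|\xi|)}(w^{\top}y)\,\partial_w w^{\xi}$ with $|\xi|\le|\tilde\xi|$ by \cref{def:ad lin op}, and analogously the boundary factor with its extra $w^{\top}n(y)$ term — is a $C^{1}$ function of $w$ whose gradient is bounded on this ball by a constant multiple of $1+\|w_r(0)\|_2^{|\tilde\xi|}$. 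A mean-value estimate separating the $a_r^2$-perturbation from the $Y_r$-perturbation then yields, for a universal $C$,
\[
\bigl|X_r^{\mathbf{W},\mathbf{a}}(pj)-X_r^{0}(pj)\bigr|\;\le\;C\,R\,\bigl(1+\|w_r(0)\|_2^{2|\tilde\xi|}\bigr),
\]
uniformly over all admissible $(\mathbf{W},\mathbf{a})$; passing to this supremum over the $R$-ball \emph{before} any probabilistic bound is what keeps only a single concentration event in play.

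For the random step, note (as in \cref{lem:step2}) that $\bigl\|\,1+\|w_r(0)\|_2^{2|\tilde\xi|}\,\bigr\|_{\psi_{1/|\tilde\xi|}}\lesssim d^{|\tilde\xi|}$ and $\mathbb{E}\bigl[1+\|w_r(0)\|_2^{2|\tilde\xi|}\bigr]\lesssim d^{|\tilde\xi|}$. Applying \cref{eq:consen_ineq} with $\alpha=1/|\tilde\xi|$ to the i.i.d. centered variables gives, with probability at least $1-\delta$,
\[
\frac{1}{m}\sum_{r=1}^{m}\bigl(1+\|w_r(0)\|_2^{2|\tilde\xi|}\bigr)\;\le\;C\Bigl(1+\tfrac{d^{|\tilde\xi|}}{\sqrt{m}}\sqrt{\log\tfrac{2}{\delta}}+\tfrac{d^{|\tilde\xi|}}{m}\bigl(\log\tfrac{2}{\delta}\bigr)^{|\tilde\xi|}\Bigr),
\]
with the $O(d^{|\tilde\xi|})$ mean absorbed into $C$. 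Substituting into the entrywise bound, replacing $\delta$ by $\delta/(n_1+n_2)^2$ so the estimate holds for all $(n_1+n_2)^2$ entries simultaneously (which only changes logarithmic factors), and using $\|\cdot\|_2\le\|\cdot\|_F$ together with the entry normalizations yields the claimed bound for $\mathbf{G}(\mathbf{W},\mathbf{a})$. The proof for $\widetilde{\mathbf{G}}(\mathbf{W},\mathbf{a})$ follows the same lines, with the factors $\varphi(y_p^{(1)};w_r)$ and $\psi(y_j^{(2)};w_r)$ and their products playing the role of the $Y_r$'s; these have the same polynomial growth in $\|w_r(0)\|_2$, of degree at most $2|\tilde\xi|$ for the resulting inner products.

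The main obstacle is the deterministic Lipschitz bound: one must differentiate the products $c_\xi(y)\tanh^{(|\xi|+k)}(w^{\top}y)\,w^{\xi}y$ — and, on the boundary side, the additional factor $w^{\top}n(y)$ — with respect to $w$ and control every cross term so that the resulting polynomial in $\|w_r(0)\|_2$ has degree exactly $2|\tilde\xi|$ and no larger, since a larger degree would inflate the Orlicz norm and hence the $d$-dependence. A secondary subtlety is uniformity over parameter space: the estimate must hold simultaneously for \emph{all} admissible $(\mathbf{W},\mathbf{a})$, which is handled here by passing to the supremum over the $R$-ball already in the deterministic step, so that no covering-net argument over parameters is required.
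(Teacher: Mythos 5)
Your approach mirrors the paper's proof almost exactly: reduce the operator norm to the Frobenius norm, establish a deterministic entrywise Lipschitz bound of the form $|G_{ij}(\mathbf{W},\mathbf{a})-G_{ij}(0)|\lesssim R\cdot\frac{1}{n_1 m}\sum_r(1+\|w_r(0)\|_2^{2|\tilde\xi|})$ uniformly over the $R$-ball, then apply the sub-Weibull concentration inequality \cref{eq:consen_ineq} to the single averaged quantity $\frac{1}{m}\sum_r\|w_r(0)\|_2^{2|\tilde\xi|}$. The deterministic mean-value step, the degree bookkeeping showing the growth is exactly $2|\tilde\xi|$, and the treatment of $\widetilde{\mathbf{G}}$ by replacing $w$-gradients with the functions $\varphi,\psi$ themselves all line up with the paper.

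Two small points of divergence, neither fatal: first, your union bound over the $(n_1+n_2)^2$ entries is unnecessary — because your deterministic step gives a bound on every entry in terms of the \emph{same} random variable $\frac{1}{m}\sum_r(1+\|w_r(0)\|_2^{2|\tilde\xi|})$, a single concentration event already covers all entries simultaneously, which is what the paper does, and which is needed to land on the stated $\log\tfrac{2}{\delta}$ rather than $\log\tfrac{2(n_1+n_2)^2}{\delta}$. Second, you correctly notice that $\mathbb{E}[\|w_r(0)\|_2^{2|\tilde\xi|}]=\mathcal{O}(d^{|\tilde\xi|})$ and so the uncentered average carries a $d^{|\tilde\xi|}$ mean term; but that term cannot be absorbed into a universal constant $C$ as you suggest — it is a genuine $d$-dependence that the leading ``$1$'' in the lemma statement does not reflect (the paper's proof also elides this). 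This is an imprecision inherited from the paper rather than introduced by you.
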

\begin{proof}
	As $\|\mathbf{G}(\mathbf{W}, \mathbf{a})-\mathbf{G}(0)\|_2\leq \|\mathbf{G}(\mathbf{W}, \mathbf{a})-\mathbf{G}(0)\|_F$, it suffices to bound each entry.
	
    Note that
\begin{equation*}
    \frac{\partial s_p}{\partial w_r}=\frac{a_r}{\sqrt{mn_1}}\sum_{k}\sum_{|\xi|=k}c_{\xi}(y_p^{(1)})\left[\tanh^{(1+|\xi|)}(w_r^\T y_p^{(1)})w_r^{\xi}y_p^{(1)}+\tanh^{(|\xi|)}(w_r^\T y_p^{(1)})\frac{\partial w_r^\xi}{\partial w_r}\right]
\end{equation*}
and
\begin{equation*}
        \frac{\partial h_j}{\partial w_r}=\frac{a_r\sqrt{\lambda}}{\sqrt{mn_2}}\left[\alpha \tanh'(w_r^\T y_{j}^{(2)})y_{j}^{(2)}+\beta \tanh^{''}(w_r^\T y_{j}^{(2)})w_r^\T n(y_j^{(2)})+\beta \tanh'(w_r^\T y_{j}^{(2)})n(y_{j}^{(2)})\right].
\end{equation*}

	For $1\le i, j\le n_1$, noticing that all higher-order derivatives of \(\tanh\) are bounded and $R\in(0,1]$,  we have
	\begin{align*}
		&|G_{ij}(\mathbf{W}, \mathbf{a})-G_{ij}(0)|\\
		&= \left|\sum\limits_{r=1}^m \left\langle \frac{\partial s_i(\mathbf{W}, \mathbf{a})}{\partial w_r}, \frac{\partial s_j(\mathbf{W}, \mathbf{a})}{\partial w_r}\right\rangle-\left\langle \frac{\partial s_i(\mathbf{W}(0), \mathbf{a}(0))}{\partial w_r}, \frac{\partial s_j(\mathbf{W}(0), \mathbf{a}(0))}{\partial w_r}\right\rangle \right|\\
		&\lesssim R\frac{1}{n_1 m}\sum\limits_{r=1}^m(\|w_r(0)\|_2^{2|\tilde{\xi}|}+1).
	\end{align*}

    For $1\le i\le n_1, n_1 +1\le j\le n_1+n_2$,  we also have
	\begin{align*}
		&|G_{ij}(\mathbf{W}, \mathbf{a})-G_{ij}(0)|\\
		&= \left|\sum\limits_{r=1}^m \left\langle \frac{\partial s_i(\mathbf{W}, \mathbf{a})}{\partial w_r}, \frac{\partial h_j(\mathbf{W}, \mathbf{a})}{\partial w_r}\right\rangle-\left\langle \frac{\partial s_i(\mathbf{W}(0), \mathbf{a}(0))}{\partial w_r}, \frac{\partial h_j(\mathbf{W}(0), \mathbf{a}(0))}{\partial w_r}\right\rangle \right|\\
		&\lesssim R\frac{1}{\sqrt{n_1 n_2} m}\sum\limits_{r=1}^m(\|w_r(0)\|_2^{2|\tilde{\xi}|}+1).
	\end{align*}

    For $n_1 +1\le i, j\le n_1+n_2$, we  still have
	\begin{align*}
		&|G_{ij}(\mathbf{W}, \mathbf{a})-G_{ij}(0)|\\
		&= \left|\sum\limits_{r=1}^m \left\langle \frac{\partial h_i(\mathbf{W}, \mathbf{a})}{\partial w_r}, \frac{\partial h_j(\mathbf{W}, \mathbf{a})}{\partial w_r}\right\rangle-\left\langle \frac{\partial h_i(\mathbf{W}(0), \mathbf{a}(0))}{\partial w_r}, \frac{\partial h_j(\mathbf{W}(0), \mathbf{a}(0))}{\partial w_r}\right\rangle \right|\\
		&\lesssim R\frac{1}{n_2 m}\sum\limits_{r=1}^m(\|w_r(0)\|_2^{2}+1)\\
        &\lesssim R\frac{1}{n_2 m}\sum\limits_{r=1}^m(\|w_r(0)\|_2^{2|\tilde{\xi}|}+1).
	\end{align*}

	Combining above results yields that
	\begin{equation*}
		\|\mathbf{G}(\mathbf{W}, \mathbf{a})-\mathbf{G}(0)\|_2^2 \leq \|\mathbf{G}(\mathbf{W}, \mathbf{a})-\mathbf{G}(0)\|_F^2
		\lesssim R^2+R^2 \left(\frac{1 }{m}\sum\limits_{r=1}^m \|w_r(0)\|_2^{|\tilde{\xi}|}\right)^2.
	\end{equation*}
	For the second term, applying \cref{eq:consen_ineq} implies that with probability at least $1-\delta$,
	\begin{eqnarray}
        \frac{1 }{m}\sum\limits_{r=1}^m \|w_r(0)\|_2^{2|\tilde{\xi}|} \lesssim \frac{d^{|\tilde{\xi}|}}{\sqrt{m}} \sqrt{\log\frac{2}{\delta}} + \frac{d^{|\tilde{\xi}|}}{m}\left(\log\frac{2}{\delta}\right)^{|\tilde{\xi}|}.
    \end{eqnarray}
	Finally, we can deduce that with probability at least $1-\delta$,
    \begin{equation*}
        \|\mathbf{G}(\mathbf{W}, \mathbf{a})-\mathbf{G}(0)\|_2 \leq CR\left(1+\frac{d^{|\tilde{\xi}|}}{\sqrt{m}} \sqrt{\log\frac{2}{\delta}} + \frac{d^{|\tilde{\xi}|}}{m}\left(\log\frac{2}{\delta}\right)^{|\tilde{\xi}|}\right),
    \end{equation*}
	where $C$ is a universal constant.	
\end{proof}

\begin{lemma}[Bounded initial loss]\label{lem:initial_loss}
	With probability at least $1-\delta$, we have
	\begin{equation}
		\mathcal{J}_{\text{emp}}(0)\leq C\left( d^{2|\tilde{\xi}|}\log\left(\frac{n_1+n_2}{\delta}\right)+\frac{d^{2|\tilde{\xi}|}}{m} \left(\log\left(\frac{n_1+n_2}{\delta}\right)\right)^{2|\tilde{\xi}|}  \right),
	\end{equation}
	where $C$ is a universal constant.
\end{lemma}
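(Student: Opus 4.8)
The plan is as follows. Since $\mathcal{J}_{\text{emp}}(0)=\tfrac12\bigl(\|s(\theta(0))\|_2^2+\|h(\theta(0))\|_2^2\bigr)$ and the data $f,g$ are bounded, it suffices to control, with high probability, the averaged squared network residuals $\frac{1}{n_1}\sum_{i=1}^{n_1}\bigl|\mathcal{L}u_{\theta(0)}(y_i^{(1)})\bigr|^2$ and $\frac{1}{n_2}\sum_{j=1}^{n_2}\bigl|\mathcal{B}u_{\theta(0)}(y_j^{(2)})\bigr|^2$; the contributions of $\|f\|_\infty^2$ and $\lambda\|g\|_\infty^2$ are $O(1)$ and are absorbed at the end. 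Writing $\mathcal{L}u_{\theta(0)}(y)=\frac{1}{\sqrt m}\sum_{r=1}^m a_r\varphi(y;w_r)$ and $\mathcal{B}u_{\theta(0)}(y)=\frac{1}{\sqrt m}\sum_{r=1}^m a_r\psi(y;w_r)$ with $\varphi,\psi$ exactly as in the proof of \cref{lem:step1}, and using that all relevant derivatives of $\tanh$ are bounded and $\|y\|_2\le1$ on $\overline\Omega\times\{1/2\}$, I first record the pointwise growth bounds $|\varphi(y;w)|^2+|\psi(y;w)|^2\lesssim 1+\|w\|_2^{2|\tilde\xi|}$, uniformly in $y$.

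The crucial point is that a term-by-term estimate of $\mathcal{L}u_{\theta(0)}(y)$ costs a spurious factor $\sqrt m$, whereas the claimed bound is $m$-independent at leading order; so I exploit the sign cancellation of the Rademacher outer weights $a_r$. Conditioning on $\mathbf W=(w_1,\dots,w_m)$, the quantity $\frac{1}{\sqrt m}\sum_r a_r\varphi(y;w_r)$ is, for each fixed $y$, a sum of independent mean-zero bounded random variables, so Hoeffding's inequality gives $\mathbb{P}_a\bigl(|\mathcal{L}u_{\theta(0)}(y)|\ge t\bigr)\le 2\exp\!\bigl(-t^2 m/(2\sum_r\varphi(y;w_r)^2)\bigr)$, and likewise for $\mathcal{B}$. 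Taking a union bound over the $n_1+n_2$ collocation points then yields, with probability at least $1-\delta/2$ over the draw of $a$,
\[
|\mathcal{L}u_{\theta(0)}(y_i^{(1)})|^2\ \lesssim\ \Bigl(\tfrac1m\textstyle\sum_{r=1}^m\varphi(y_i^{(1)};w_r)^2\Bigr)\log\tfrac{n_1+n_2}{\delta}\quad(\forall\, i),
\]
with the analogous bound for $|\mathcal{B}u_{\theta(0)}(y_j^{(2)})|^2$ in terms of $\psi$.

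Next I bound $\frac1m\sum_r\varphi(y;w_r)^2\lesssim 1+\frac1m\sum_r\|w_r\|_2^{2|\tilde\xi|}$ uniformly in $y$, and control the right-hand side by the same sub-Weibull concentration inequality \cref{eq:consen_ineq} already used in \cref{lem:step2,lem:step3}, now with $\alpha=1/|\tilde\xi|$: since $w_r\sim\mathcal{N}(0,\mathbf I)$ one has $\bigl\|\,\|w_r\|_2^{2|\tilde\xi|}\,\bigr\|_{\psi_{1/|\tilde\xi|}}\lesssim d^{|\tilde\xi|}$ and $\mathbb{E}\|w_r\|_2^{2|\tilde\xi|}\lesssim d^{|\tilde\xi|}$, hence with probability at least $1-\delta/2$ over $\mathbf W$,
\[
\tfrac1m\textstyle\sum_{r=1}^m\|w_r\|_2^{2|\tilde\xi|}\ \lesssim\ d^{|\tilde\xi|}+\tfrac{d^{|\tilde\xi|}}{\sqrt m}\sqrt{\log\tfrac1\delta}+\tfrac{d^{|\tilde\xi|}}{m}\bigl(\log\tfrac1\delta\bigr)^{|\tilde\xi|}.
\]
Intersecting the two high-probability events, averaging the pointwise bounds over the $n_1$ (resp.\ $n_2$) points, and absorbing $\|f\|_\infty^2$, $\lambda\|g\|_\infty^2$ and sub-leading factors (using $d^{|\tilde\xi|}\le d^{2|\tilde\xi|}$ and $(\log\tfrac{n_1+n_2}{\delta})^{|\tilde\xi|+1}\le(\log\tfrac{n_1+n_2}{\delta})^{2|\tilde\xi|}$, the logarithm being $\ge1$), I obtain $\mathcal{J}_{\text{emp}}(0)\le C\bigl(d^{2|\tilde\xi|}\log\tfrac{n_1+n_2}{\delta}+\tfrac{d^{2|\tilde\xi|}}{m}(\log\tfrac{n_1+n_2}{\delta})^{2|\tilde\xi|}\bigr)$, as claimed.

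I expect the main obstacle to be precisely the first of these steps: one cannot control $\mathcal{L}u_{\theta(0)}$ without a $\sqrt m$ blow-up unless one uses the randomness of the outer Rademacher weights, and because the inner-layer growth $\|w\|_2^{|\tilde\xi|}$ makes $\varphi$ only sub-Weibull of order $1/|\tilde\xi|$ — neither sub-Gaussian nor sub-exponential — the concentration of $\frac1m\sum_r\|w_r\|_2^{2|\tilde\xi|}$ must be carried out with the heavy-tailed inequality \cref{eq:consen_ineq} rather than a Bernstein-type bound; this is also the mechanism through which the exponent $2|\tilde\xi|$ on the logarithm enters the $1/m$ correction term. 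The remaining bookkeeping (Hoeffding plus union bound, and tracking constants depending on $\|c_\xi\|_\infty$, $\|f\|_\infty$, $\|g\|_\infty$, $\alpha,\beta,\lambda$) is routine.
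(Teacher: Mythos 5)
Your proof is correct but takes a genuinely different route from the paper's. The paper applies the sub-Weibull concentration inequality \cref{eq:consen_ineq} in a single stroke to the products $X_r = a_r(0)\varphi(y;w_r(0))$, which are mean-zero (because $a_r(0)$ is symmetric and independent of $w_r(0)$) and sub-Weibull of order $1/|\tilde\xi|$ (because of the polynomial growth $\|w_r\|_2^{|\tilde\xi|}$). This yields $|\mathcal{L}u_{\theta(0)}(y)|\lesssim d^{|\tilde\xi|}\sqrt{\log\tfrac1\delta}+\tfrac{d^{|\tilde\xi|}}{\sqrt m}(\log\tfrac1\delta)^{|\tilde\xi|}$ directly, which is then squared and union-bounded over the $n_1+n_2$ points. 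You instead condition on $\mathbf W$, use Hoeffding over the Rademacher outer weights (exploiting boundedness conditionally, not just mean zero) to bound $|\mathcal{L}u_{\theta(0)}(y)|^2$ by $\bigl(\tfrac1m\sum_r\varphi(y;w_r)^2\bigr)\log\tfrac{n_1+n_2}{\delta}$, and then control the random feature energy $\tfrac1m\sum_r\|w_r\|_2^{2|\tilde\xi|}$ separately via \cref{eq:consen_ineq}. The two-step decoupling is slightly more modular, and in fact gives a tighter leading-order $d$-dependence ($d^{|\tilde\xi|}\log$ rather than $d^{2|\tilde\xi|}\log$), from which the stated bound follows after absorbing cross terms by AM--GM (e.g.\ $\tfrac{1}{\sqrt m}\log^{3/2}\le\tfrac12\log+\tfrac1{2m}\log^2$) and using $|\tilde\xi|\ge1$, $\log(\cdot)\ge1$. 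The paper's one-shot argument is more direct but costs a factor $d^{|\tilde\xi|}$ upon squaring. Both are valid; the logical structure of intersecting the two failure events (over $a$ given $\mathbf W$, and over $\mathbf W$) is sound.
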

\begin{proof}
	For the initial value of PINN, we have 
	\begin{equation}
		\begin{aligned}
			&\mathcal{J}_{\text{emp}}(0)=\frac{1}{2}\sum\limits_{p=1}^{n_1}s_p^2(\mathbf{W}(0), \mathbf{a}(0))+\frac{1}{2}\sum\limits_{j=1}^{n_2}h_j^2(\mathbf{W}(0), \mathbf{a}(0)) \\
			&= \frac{1}{2n_1}\sum\limits_{p=1}^{n_1} \left(\frac{1}{\sqrt{m}} \sum\limits_{r=1}^m a_r(0) \sum_{k=0}^{+\infty} \sum_{|\xi|=k} c_{\xi}(y_p^{(1)}) \tanh^{(|\xi|)}(w_r(0)^\T y_p^{(1)})w^{\xi}-f(y_p^{(1)}) \right)^2\\
			&\quad + \frac{1}{2n_2}\sum\limits_{j=1}^{n_2} \left(\frac{1}{\sqrt{m}} \sum\limits_{r=1}^m a_r(0)(\alpha \tanh(w^T y_{j}^{(2)})+\beta \tanh'(w^\T y_{j}^{(2)})w^\T n(y_{j}^{(2)}))-g(y_j^{(2)})\right)^2\\
			&\leq \frac{1}{n_1}\sum\limits_{p=1}^{n_1} \left(\frac{1}{\sqrt{m}} \sum\limits_{r=1}^m a_r(0) \sum_{k=0}^{+\infty} \sum_{|\xi|=k} c_{\xi}(y_p^{(1)}) \tanh^{(|\xi|)}(w_r(0)^\T y_p^{(1)})w^{\xi}\right)^2+(f(y_p^{(1)}) )^2\\
			&\quad + \frac{1}{n_2}\sum\limits_{j=1}^{n_2} \left(\frac{1}{\sqrt{m}} \sum\limits_{r=1}^m a_r(0)\alpha \tanh(w^T y_{j}^{(2)})\right)^2\\ 
            &\quad + \frac{1}{n_2}\sum\limits_{j=1}^{n_2} \left(\frac{1}{\sqrt{m}} \sum\limits_{r=1}^m a_r(0)\beta \tanh'(w^\T y_{j}^{(2)})w^\T n(y_{j}^{(2)})\right)^2+(g(y_j^{(2)}))^2.\label{eq:initial_loss}
		\end{aligned}
	\end{equation}

	For the first term in \cref{eq:initial_loss}, note that $\mathbb{E}\left[a_r(0)\sum_{k=0}^{+\infty} \sum_{|\xi|=k} c_{\xi}(y_p^{(1)}) \tanh^{(|\xi|)}(w_r(0)^\T y_p^{(1)})w^{\xi}\right]=0$ and
	\begin{equation*}
		\left|a_r(0)\sum_{k=0}^{+\infty} \sum_{|\xi|=k} c_{\xi}(y_p^{(1)}) \tanh^{(|\xi|)}(w_r(0)^\T y_p^{(1)})w^{\xi}\right|\lesssim 1+ \|w_r(0)\|_{2}^{|\tilde{\xi}|} .
	\end{equation*}
	Therefore, we have
	\begin{equation*}
         \left\|  a_r(0)\sum_{k=0}^{+\infty} \sum_{|\xi|=k} c_{\xi}(y_p^{(1)}) \tanh^{(|\xi|)}(w_r(0)^\T y_p^{(1)})w^{\xi} \right\|_{\psi_{\frac{1}{|\tilde{\xi}|}}}\lesssim 1+ \left\|\|w_r(0)\|_{2}^{|\tilde{\xi}|}\right\|_{\psi_{\frac{1}{|\tilde{\xi}|}}}\lesssim d^{|\tilde{\xi}|}.
    \end{equation*}

   Let $X_r=a_r(0)\sum_{k=0}^{+\infty} \sum_{|\xi|=k} c_{\xi}(y_p^{(1)}) \tanh^{(|\xi|)}(w_r(0)^\T y_p^{(1)})w^{\xi}$, then with probability at least $1-\delta$,
	\begin{equation*}
        \left|\sum\limits_{r=1}^m \frac{X_r}{\sqrt{m}}\right|\lesssim d^{|\tilde{\xi}|}\sqrt{\log\frac{2}{\delta}} + \frac{d^{|\tilde{\xi}|}}{\sqrt{m}}\left(\log\frac{2}{\delta}\right)^{|\tilde{\xi}|}. 
    \end{equation*}

    As for the second term, we have
     $\mathbb{E}[a_r(0)\alpha\tanh(w_r(0)^T y_j^{(2)})]=0$ and by Lipschitz continuty,
	\begin{equation*}
          |a_r(0)\alpha\tanh(w_r(0)^T y_j^{(2)})|\lesssim |w_r(0)^T y_j^{(2)}|.
    \end{equation*}
	Thus
	\begin{equation*}
         \| a_r(0)\alpha\tanh(w_r(0)^T y_j^{(2)})\|_{\psi_2} \leq C,
    \end{equation*}
    
   as $w_r(0)^T y_j^{(2)}\sim \mathcal{N}(0,\|y_j^{(2)}\|_2^2)$.

	Let $Y_r=a_r(0)\alpha\tanh(w_r(0)^T y_j^{(2)})$, applying \cref{eq:consen_ineq} yields that with probability at least $1-\delta$,
	\begin{equation*}
        \left|\sum\limits_{r=1}^m \frac{Y_r}{\sqrt{m}}\right|\lesssim \sqrt{\log\left(\frac{1}{\delta}\right)} + \frac{1}{\sqrt{m}} \log\left(\frac{1}{\delta}\right).
    \end{equation*}
    
    Finally, similar to the approach used for the first term, we can also control the third term.
	Let $Z_r=a_r(0)\beta \tanh'(w^\T y_{j}^{(2)})w^\T n(y_{j}^{(2)})$, then with probability at least $1-\delta$,
	\begin{equation*}
        \left|\sum\limits_{r=1}^m \frac{Z_r}{\sqrt{m}}\right|\lesssim d\sqrt{\log\frac{2}{\delta}} + \frac{d}{\sqrt{m}}\log\frac{2}{\delta}. 
    \end{equation*}

    Combining all results above yields that
	\begin{equation*}
        \mathcal{J}_{\text{emp}}(0)\lesssim d^{2|\tilde{\xi}|}\log\left(\frac{n_1+n_2}{\delta}\right)+\frac{d^{2|\tilde{\xi}|}}{m} \left(\log\left(\frac{n_1+n_2}{\delta}\right)\right)^{2|\tilde{\xi}|}  
    \end{equation*}
   holds with probability at least $1-\delta$.
\end{proof}
\begin{lemma}
	With probability at least $1-\delta$,
	\begin{equation}
		\|w_r(0)\|_2^2 \leq C\left(d+\sqrt{d\log\left(\frac{m}{\delta}\right)}+\log\left(\frac{m}{\delta}\right) \right) :=R'^{2}
	\end{equation}
	holds for all $1\le r\le m$ and $C$ is a universal constant.
\end{lemma}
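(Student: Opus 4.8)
The plan is to observe that $\|w_r(0)\|_2^2$ is a sum of $d+1$ independent squared standard Gaussians, hence a $\chi^2_{d+1}$ random variable, to control its upper tail by a standard sub-exponential (Bernstein-type) concentration bound, and then to take a union bound over the $m$ neurons.

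First I would fix $r$ and write $\|w_r(0)\|_2^2 = \sum_{i=1}^{d+1} w_{r,i}(0)^2$, where the $w_{r,i}(0)$ are i.i.d.\ $\mathcal{N}(0,1)$. Each summand satisfies $\mathbb{E}[w_{r,i}(0)^2] = 1$ and $\|w_{r,i}(0)^2\|_{\psi_1} \leq C$ for a universal constant $C$, so the centered variables $w_{r,i}(0)^2 - 1$ are sub-Weibull of order $1$ with Orlicz norm bounded by a universal constant. Applying the concentration inequality \cref{eq:consen_ineq} with $\alpha = 1$ and $a = (1,\ldots,1) \in \mathbb{R}^{d+1}$ (equivalently, Bernstein's inequality for sub-exponential variables), we obtain that for any $t > 0$, with probability at least $1 - 2\mathrm{e}^{-t}$,
\[
\left| \sum_{i=1}^{d+1} \left(w_{r,i}(0)^2 - 1\right) \right| \lesssim \sqrt{(d+1)\,t} + t ,
\]
equivalently $\|w_r(0)\|_2^2 \leq (d+1) + C'\big(\sqrt{d\,t} + t\big)$ for a universal constant $C'$. (Alternatively, the Laurent–Massart bound for $\chi^2$ variables gives directly $\|w_r(0)\|_2^2 \le d+1 + 2\sqrt{(d+1)t} + 2t$, which is of the same form and bypasses the Orlicz machinery.)

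Next I would choose $t = \log(2m/\delta)$, so that the failure probability for a single index $r$ is at most $\delta/m$, and take a union bound over $r = 1, \ldots, m$. This yields that, with probability at least $1 - \delta$,
\[
\|w_r(0)\|_2^2 \leq (d+1) + C'\left(\sqrt{d\log(2m/\delta)} + \log(2m/\delta)\right) \lesssim d + \sqrt{d\log(m/\delta)} + \log(m/\delta)
\]
simultaneously for all $1 \leq r \leq m$, which is the claimed bound once $R'^2$ is defined as $C\big(d + \sqrt{d\log(m/\delta)} + \log(m/\delta)\big)$ for a suitable universal constant $C$.

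There is no serious obstacle here; the only point requiring care is that the estimate must hold simultaneously for all $m$ neurons, which is exactly why the union bound is needed and why $m$ enters inside the logarithm, and that the sub-exponential tail must be invoked with the correct scaling so that both the cross term $\sqrt{d\log(m/\delta)}$ and the linear term $\log(m/\delta)$ appear with the right powers.
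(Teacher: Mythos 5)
Your proposal is correct and follows essentially the same route as the paper: apply the sub-Weibull concentration inequality \cref{eq:consen_ineq} (equivalently, a Bernstein/chi-squared tail bound) to $\|w_r(0)\|_2^2 = \sum_i w_{r,i}(0)^2$ for a fixed $r$, then take a union bound over $r \in [m]$ to push $m$ inside the logarithm. You simply spell out the chi-squared concentration step that the paper leaves implicit; the Laurent--Massart aside is a nice shortcut but changes nothing substantive.
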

\begin{proof}
	From \cref{eq:consen_ineq}, we can deduce that for fixed $r$,
	\[\|w_r(0)\|_2^2 \leq C\left(d+\sqrt{d\log\left(\frac{1}{\delta}\right)}+\log\left(\frac{1}{\delta}\right) \right)\]
	holds with probability at least $1-\delta$.
	
	Therefore, the following holds with probability at least $1-\delta$.
	\[\|w_r(0)\|_2^2 \leq C\left(d+\sqrt{d\log\left(\frac{m}{\delta}\right)}+\log\left(\frac{m}{\delta}\right) \right), \ \forall r\in[m].\]	
\end{proof}

\begin{lemma}\label{lem:8}
Let $R=\mathcal{O}\left(\frac{\min\{\lambda_0,\tilde\lambda_0\}}{d^{|\tilde{\xi}|}(\log\frac{2}{\delta})^{|\tilde{\xi}|}}\right)$. 
If
\begin{equation*}
    m = \Omega \left( \frac{1}{\left(\lambda_0 + \tilde{\lambda}_0\right)^2} \cdot 
   d^{2|\tilde{\xi}|} \left(\log\left(\frac{n_1+n_2}{\delta}\right)\right)^{2|\tilde{\xi}|} 
    \cdot \frac{R'^{6}}{R^2} \right) ,
\end{equation*}
and assuming \(|a_r(\tau)| \leq 2\), \(\|w_r(\tau)\|_2 \leq 2R'\), 
\(\lambda_{\min}(G(\mathbf{W}(\tau), \mathbf{a}(\tau))) \geq \frac{\lambda_0}{2}\), and 
\(\lambda_{\min}(\widetilde{G}(\mathbf{W}(\tau), \mathbf{a}(\tau))) \geq \frac{\bar{\lambda}_0}{2}\) for all \(0 \leq \tau \leq t\), then 
\(\|w_r(\tau) - w_r(0)\|_2 \leq R\) and \(|a_r(\tau) - a_r(0)| \leq R\) for all \(r \in [m]\) and \(0 \leq \tau \leq t\).
\end{lemma}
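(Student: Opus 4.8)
The plan is to establish this ``parameters stay near initialization'' estimate in four moves: (i) deduce exponential decay of $\mathcal{J}_{\text{emp}}$ along the flow from the two Gram‑matrix lower bounds; (ii) bound the per‑neuron velocities $\dot w_r,\dot a_r$ uniformly along the trajectory using the a priori bounds $\|w_r(\tau)\|_2\le 2R'$ and $|a_r(\tau)|\le 2$; (iii) integrate in time; (iv) calibrate $m$. For (i), write $v=(s(\theta)^\top,h(\theta)^\top)^\top$, so that $\mathcal{J}_{\text{emp}}=\tfrac12\|v\|_2^2$ and $\dot v=-(\mathbf{G}(\theta)+\widetilde{\mathbf{G}}(\theta))v$. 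Then
\[
\frac{\d}{\d t}\mathcal{J}_{\text{emp}}(\theta(\tau))=-v^\top(\mathbf{G}+\widetilde{\mathbf{G}})v\le-\bigl(\lambda_{\min}(\mathbf{G})+\lambda_{\min}(\widetilde{\mathbf{G}})\bigr)\|v\|_2^2\le-(\lambda_0+\tilde\lambda_0)\,\mathcal{J}_{\text{emp}}(\theta(\tau)),\qquad 0\le\tau\le t,
\]
using subadditivity of $\lambda_{\min}$ for positive semidefinite matrices together with the hypotheses $\lambda_{\min}(\mathbf{G}(\tau))\ge\lambda_0/2$ and $\lambda_{\min}(\widetilde{\mathbf{G}}(\tau))\ge\tilde\lambda_0/2$ on $[0,t]$. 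Grönwall's inequality then gives $\mathcal{J}_{\text{emp}}(\theta(\tau))\le e^{-(\lambda_0+\tilde\lambda_0)\tau}\mathcal{J}_{\text{emp}}(\theta(0))$ for $0\le\tau\le t$, consistent with the rate asserted in \cref{thm:cong for lin}.

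For (ii), I would substitute the explicit formulas for $\partial s_p/\partial w_r$, $\partial h_j/\partial w_r$, $\partial s_p/\partial a_r$, $\partial h_j/\partial a_r$ recorded above. Each is a factor $a_r/\sqrt{m\,n_i}$ times bounded derivatives of $\tanh$ multiplied by monomials in $w_r$ of degree at most $|\tilde\xi|$; hence, under $|a_r(\tau)|\le 2$ and $\|w_r(\tau)\|_2\le 2R'$, one gets $\max_p\|\partial s_p/\partial w_r\|_2\lesssim R'^{|\tilde\xi|}/\sqrt{m\,n_1}$ and analogously for the other three. By Cauchy--Schwarz ($\|s\|_1\le\sqrt{n_1}\|s\|_2$, $\|h\|_1\le\sqrt{n_2}\|h\|_2$),
\[
\Bigl\|\frac{\partial\mathcal{J}_{\text{emp}}}{\partial w_r}\Bigr\|_2+\Bigl|\frac{\partial\mathcal{J}_{\text{emp}}}{\partial a_r}\Bigr|\;\lesssim\;\frac{R'^{|\tilde\xi|}}{\sqrt m}\bigl(\|s\|_2+\|h\|_2\bigr)\;\lesssim\;\frac{R'^{|\tilde\xi|}}{\sqrt m}\sqrt{\mathcal{J}_{\text{emp}}(\theta(\tau))}.
\]
For (iii), since $\dot w_r=-\partial\mathcal{J}_{\text{emp}}/\partial w_r$ and $\dot a_r=-\partial\mathcal{J}_{\text{emp}}/\partial a_r$, integrating this bound and inserting the decay from step (i) yields, uniformly in $r$ and in $\tau\le t$,
\[
\|w_r(\tau)-w_r(0)\|_2+|a_r(\tau)-a_r(0)|\;\lesssim\;\frac{R'^{|\tilde\xi|}\sqrt{\mathcal{J}_{\text{emp}}(\theta(0))}}{\sqrt m}\int_0^{\infty}e^{-(\lambda_0+\tilde\lambda_0)s/2}\,\d s\;\lesssim\;\frac{R'^{|\tilde\xi|}\sqrt{\mathcal{J}_{\text{emp}}(\theta(0))}}{\sqrt m\,(\lambda_0+\tilde\lambda_0)}.
\]

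For (iv), on the high‑probability events of \cref{lem:initial_loss} and of the preceding lemma giving $\|w_r(0)\|_2\le R'$, the initial loss satisfies $\mathcal{J}_{\text{emp}}(\theta(0))\lesssim d^{2|\tilde\xi|}\bigl(\log\tfrac{n_1+n_2}{\delta}\bigr)^{2|\tilde\xi|}$ (the second term in \cref{lem:initial_loss} being absorbed once $m$ is large). Substituting this and $\|w_r(0)\|_2\le R'$ into the bound from step (iii), the stated lower bound on $m$ is exactly the one that forces the prefactor $R'^{|\tilde\xi|}\sqrt{\mathcal{J}_{\text{emp}}(\theta(0))}/(\sqrt m\,(\lambda_0+\tilde\lambda_0))$ below $R$; this gives $\|w_r(\tau)-w_r(0)\|_2\le R$ and $|a_r(\tau)-a_r(0)|\le R$ for all $r\in[m]$ and $0\le\tau\le t$, and a union bound over the two events delivers probability at least $1-\delta$.

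The main obstacle is the uniform gradient bound of step (ii): because $\mathcal{L}$ is an arbitrary admissible operator (rather than a fixed second‑order one), $\partial s_p/\partial w_r$ carries terms of the shape $c_\xi(y_p^{(1)})\,\tanh^{(1+|\xi|)}(w_r^\top y_p^{(1)})\,w_r^{\xi}$ and $c_\xi(y_p^{(1)})\,\tanh^{(|\xi|)}(w_r^\top y_p^{(1)})\,\partial_{w_r}w_r^{\xi}$, whose magnitude can grow like $\|w_r\|_2^{|\tilde\xi|}$; the argument only closes because the hypotheses $\|w_r(\tau)\|_2\le 2R'$ and $|a_r(\tau)|\le 2$ confine this growth uniformly on $[0,t]$, and this is precisely what forces $m$ to scale with a polynomial power of $R'$ (hence, up to logarithmic factors, with a power of $d$). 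Everything else — Grönwall, Cauchy--Schwarz, the convergent time integral, and the union bound — is routine once this uniform control along the trajectory is secured.
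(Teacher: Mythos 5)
Your proof is correct and follows essentially the same route as the paper's (omitted) proof, which defers to Lemma B.2 of \cite{GD_cong_PINN}: use the Gram-matrix lower bounds to obtain exponential decay of $\|(s,h)\|_2$ via Gr\"onwall, bound the per-neuron gradient using the a priori constraints $|a_r(\tau)|\le 2$, $\|w_r(\tau)\|_2\le 2R'$, integrate in time, and calibrate $m$ against $\mathcal{J}_{\text{emp}}(0)$. One small remark: your derivation produces a requirement scaling like $R'^{2|\tilde\xi|}/R^2$, whereas the lemma as stated has $R'^{6}/R^2$; these agree only for $|\tilde\xi|=3$, so the exponent $6$ in the lemma appears to be a leftover from a specialization rather than an error in your argument.
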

\begin{proof} 
    The proof follows the same approach as Lemma B.2 in the paper \cite{GD_cong_PINN} and is omitted here for brevity.
\end{proof}

After the preparation of the previous lemmas, we now present the complete proof of \cref{thm:cong for lin}.
\begin{proof}
    Finally, for all \( r \in [m] \), \( w_r(t) \) and \( \mathbf{a}(t) \) will remain within the balls \(B(w_r(0), R)\) and \(B(\mathbf{a}(0), R)\), respectively. 
    Without loss of generality, let us assume \( R' \geq R \), so that \( \| w_r(\tau) \|_2 \leq 2R' \) if \( w_r(\tau) \) stays inside \(B(w_r(0), R)\). 
    \cref{lem:8} shows that if 
\begin{equation*}
    \begin{aligned}
    m &= \Omega \left( \frac{1}{\left(\lambda_0 + \tilde{\lambda}_0\right)^2} \cdot 
   d^{2|\tilde{\xi}|} \left(\log\left(\frac{n_1+n_2}{\delta}\right)\right)^{2|\tilde{\xi}|} 
    \cdot \frac{R'^{6}}{R^2} \right)\\
    & =\widetilde{\Omega}\left( \frac{1}{\left(\lambda_0 + \tilde{\lambda}_0\right)^2} \cdot 
   d^{4|\tilde{\xi}|} \left(\log\left(\frac{n_1+n_2}{\delta}\right)\right)^{4|\tilde{\xi}|} 
    \cdot \frac{d^3}{\min\{\lambda_0^2, \tilde\lambda_0^2\}}\right) ,
    \end{aligned}
\end{equation*}
then for all \(t > 0\) and \(1\le  r \le m \), we have \( \|w_r(t) - w_r(0)\|_2 \leq R\), \( \|\mathbf{a}(t) - \mathbf{a}(0)\|_{2} \leq R \), \( \lambda_{\min}(G(\mathbf{W}(t), \mathbf{a}(t))) \geq \frac{\lambda_0}{2}\), and \(\lambda_{\min}(\widetilde{G}(\mathbf{W}(t), \mathbf{a}(t))) \geq \frac{\bar{\lambda}_0}{2}\). 
Then we have
\begin{equation*}
    \begin{aligned}
    &\frac{\d \mathcal{J}_{\text{emp}}(\mathbf{W}(t), \mathbf{a}(t))}{\d t}=\frac{1}{2}\frac{\d}{\d t}\left\|\begin{pmatrix}
        s(\mathbf{W}(t), \mathbf{a}(t)) \\
        h(\mathbf{W}(t), \mathbf{a}(t))
    \end{pmatrix}
    \right\|_2^2 \\
    &=- \left[
    s(\mathbf{W}(t), \mathbf{a}(t))^\top,\, h(\mathbf{W}(t), \mathbf{a}(t))^\top
\right]
\cdot 
\left(
    \mathbf{G}(\mathbf{W}(t), \mathbf{a}(t)) + \widetilde{\mathbf{G}}(\mathbf{W}(t), \mathbf{a}(t))
\right)
\cdot
\begin{pmatrix}
    s(\mathbf{W}(t), \mathbf{a}(t)) \\
    h(\mathbf{W}(t), \mathbf{a}(t))
\end{pmatrix}\\
&\leq 
    -\frac{1}{2}(\lambda_0 + \tilde{\lambda}_0)
    \cdot 
    \left\|
    \begin{pmatrix}
        s(\mathbf{W}(t), \mathbf{a}(t)) \\
        h(\mathbf{W}(t), \mathbf{a}(t))
    \end{pmatrix}
    \right\|_2^2\\
    &=-(\lambda_0 + \tilde\lambda_0)\cdot\mathcal{J}_{\text{emp}}(\mathbf{W}(t), \mathbf{a}(t)).
    \end{aligned}
\end{equation*}

Furthermore,
\begin{equation*}
\mathcal{J}_{\text{emp}}(\mathbf{W}(t), \mathbf{a}(t)) \leq \exp\left( -(\lambda_0 + \bar{\lambda}_0) \cdot t \right) \cdot \mathcal{J}_{\text{emp}}(w(0), a(0)),
\end{equation*}
for all \(t > 0\).
\end{proof}

\section{Extension to deeper networks}\label{sec:deeper networks}
In this section, we discuss how our main results (\cref{thm:cong for lin} and \cref{theorem:convergence under flat}) can be extended to deeper neural network architectures. 
For linear PDEs, the convergence analysis grounded in neural tangent kernel (NTK) theory naturally generalizes to multi-layer networks, leveraging established results from the NTK literature. 
In the context of nonlinear PDEs, we address the critical issue of the linear independence of neuron functions (\cref{lemma:tanh}) and summarize recent theoretical advances that provide sufficient conditions for preserving this property in deeper networks, especially three-layer architectures. 
The relevant literature and further details are reviewed below.

\subsection{Convergence results for solving linear PDEs with deeper neural networks}\label{sec:deeper linear}
Previous works~\citep{GD_cong_PINN,IGD} on the convergence of PINN frameworks for second-order elliptic equations, both for gradient descent and implicit gradient descent, have been primarily limited to two-layer neural networks. 
In contrast, \cite{du2018gradient} establishes convergence of the loss function for over-parameterized, multi-layer fully connected networks in the supervised learning setting, fundamentally relying on the neural tangent kernel (NTK) theory for deep networks. 
By combining the proof strategy of our result~\cref{thm:cong for lin} with the layer-wise NTK analysis from~\cite{du2018gradient}, the convergence guarantees within the PINN framework can be extended to over-parameterized, multi-layer fully connected networks, provided either implicit gradient descent or gradient descent with a sufficiently small step size is used. 

Below, we state an informal theorem (omitting explicit over-parameterization bounds), as in practice it suffices to select the network width large enough to observe the convergence behavior, rather than strictly adhering to theoretical minima.
\begin{theorem}[Informal: Convergence of Multi-layer PINNs for Certain Linear PDEs]
Consider a physics-informed neural network (PINN) with a deep (multi-layer) architecture used to approximate the solution of an admissible linear PDE, where the empirical loss is defined analogously to~\cref{eq:emp}, 
and assume standard random initialization for all weights and biases. 
Under gradient flow training and with a sufficiently wide network, the empirical loss $\mathcal{J}_{\mathrm{emp}}(\theta(t))$ decreases to zero as $t \to \infty$, with a convergence rate governed by the spectrum of the neural tangent kernel associated with the multi-layer architecture.
\end{theorem}
Furthermore,~\cite{du2018gradient} also extends convergence analyses to convolutional and ResNet architectures in the supervised learning setting. 
This indicates that similar convergence results for PINNs could potentially be obtained for these more advanced architectures, which is a promising direction for future research.

\subsection{Convergence results for solving nonlinear PDEs with three-layer neural networks}\label{sec:deeper nonlinear}
To the best of our knowledge, our work is the first to investigate the convergence of PINNs for solving nonlinear PDEs, even though it is restricted to the two-layer random feature model. 
Following the structure of \cref{sec:boundary coercivity} in the main text, we note that the central step in the proof of \cref{theorem:convergence under flat} is to establish the coercivity of the loss function with respect to the trainable parameter $a$. 
This step fundamentally depends on demonstrating the linear independence of the neuron basis functions (see \cref{proposition:local flat bd lin indep}), which, in turn, relies on \cref{lemma:tanh}.

Extending \cref{theorem:convergence under flat} to multi-layer random feature models---where all hidden layer parameters are fixed randomly---thus essentially reduces to ensuring that \cref{lemma:tanh} holds for multi-layer architectures. 
In this context, the recent work~\cite{zhang2024linear} provides a relevant discussion and establishes the following result.
\begin{proposition}[Proposition 5.3 in \cite{zhang2024linear}]
Given $d, m, n \in \mathbb{N}$. Let $\{(w_k^{(1)}, b_k^{(1)})\}_{k=1}^m \subset \mathbb{R}^{md + m}$ be such that $(w_{k_1}^{(1)}, b_{k_1}^{(1)}) \pm (w_{k_2}^{(1)}, b_{k_2}^{(1)}) \neq 0$ for all distinct $k_1, k_2 \in \{1, \dots, m\}$ and $w_k^{(1)} \neq 0$ for all $k \in \{1, \dots, m\}$. Let $\{(w_j^{(2)}, b_j^{(2)})\}_{j=1}^n \subset \mathbb{R}^{mn + n}$ be such that $(w_{j_1}^{(2)}, b_{j_1}^{(2)}) \pm (w_{j_2}^{(2)}, b_{j_2}^{(2)}) \neq 0$ for all distinct $j_1, j_2 \in \{1, \dots, n\}$ and $w_j^{(2)} \neq 0$ for all $j \in \{1, \dots, n\}$. Then for $\sigma$ being a sigmoid or tanh activation function, the three-layer neurons
\[
\left\{
\sigma \left(
   \sum_{k=1}^m w_{jk}^{(2)}\, \sigma \left( w_k^{(1)} z + b_k^{(1)} \right) + b_j^{(2)}
   \right)
\right\}_{j=1}^n
\]
are linearly independent.
\end{proposition}
Although the three-layer result is more general than our \cref{lemma:tanh}, the proof of \cref{lemma:tanh} is much more straightforward, while the three-layer result relies on elaborate arguments in the cited work.
Therefore, by applying the above result and following the proof strategy of \cref{theorem:convergence under flat}, we can obtain the following convergence theorem.
\paragraph{Random Initialization (Three-layer Network)}
Inner-layer parameters 
\(\{(w_k^{(1)}, b_k^{(1)})\}_{k=1}^{m}\) 
and
\(\{(w_j^{(2)}, b_j^{(2)})\}_{j=1}^{n}\)
are randomly initialized as follows:
\[
w_k^{(1)} \sim \mathcal{N}(0, \mathrm{Id}) \ \text{i.i.d.}, \quad
w_j^{(2)} \sim \mathcal{N}(0, \mathrm{Id}) \ \text{i.i.d.},
\]
\[
b_k^{(1)} \sim \mathcal{N}(0,1) \ \text{i.i.d.}, \quad
b_j^{(2)} \sim \mathcal{N}(0,1) \ \text{i.i.d.} 
\]
for \(1 \leq k \leq m, \ 1 \leq j \leq n\).

\begin{theorem}[Almost sure convergence via admissible initialization for three-layer networks]\label{theorem:3layer_convergence_under_flat}
Under \cref{assumption:flat}, regardless of the specific form of the differential operator $\mathcal{L}$ in the PDE with Dirichlet boundary condition, we can initialize the inner parameters 
\(\{(w_k^{(1)}, b_k^{(1)})\}_{k=1}^{m}\) 
and 
\(\{(w_j^{(2)}, b_j^{(2)})\}_{j=1}^{n}\) 
with probability 1 such that:

\begin{enumerate}
    \item[(i)]  $\mathcal{J}$ is coercive about $a$;
    \item[(ii)] All convergence results of \cref{proposition:convergence under coer} hold for the three-layer setting.
\end{enumerate}
\end{theorem}

As for more complex network architectures, we are currently unable to provide a rigorous theoretical result, and this will be the subject of future research. 
It is also worth noting that when the inner-layer parameters are trainable, even in the two-layer case, current theory only guarantees either divergence to infinity or convergence to a critical point. 
Without imposing additional assumptions, it is not yet possible to rule out parameter divergence, and thus convergence cannot be ensured.

\section{Additional experiments}\label{sec:additional experiments}
In this section, we provide experimental details and results to supplement the main text. 
We begin by presenting the pseudocode for implementing PINNs with implicit gradient descent, followed by a detailed description of the experimental setup for the Burgers' equation. 
Finally, we provide further experimental setups and results for high-dimensional test problems.
In addition, all experiments in this paper were conducted on a desktop computer equipped with a single 4060Ti GPU.

\subsection{Pseudocode for IGD}\label{sec:IGD PINN}
Compared to standard optimization algorithms such as gradient descent or stochastic gradient descent, implicit gradient descent is less commonly used and may be less familiar to readers. 
Therefore, we provide a detailed explanation here. 
Let $\theta$ denote all trainable parameters in the network and $\mathcal{J}(\theta)$ represent the empirical loss function.
The iteration rule for implicit gradient descent is given by:
\begin{equation*}
    \theta^{k+1} = \theta^k - \eta \nabla \mathcal{J}(\theta^{k+1}), \quad k=0,1,2,\ldots .
\end{equation*}
This update step can be interpreted as solving the following optimization problem:
\begin{equation}
    \min_{\xi}\ \eta\, \mathcal{J}(\xi) + \frac{1}{2} \|\xi - \theta^k\|_2^2.\label{eq:igd subproblem}
\end{equation}
The first-order optimality condition for this problem is equivalent to the IGD update rule.  
Consequently, regardless of the step size, the parameter sequence generated by IGD guarantees a monotonically decreasing loss value.  
This inherent stability allows IGD to impose much weaker restrictions on the choice of step size compared to standard gradient descent.

It is important to note that the operator $I + \eta \nabla \mathcal{J}$ may not be invertible for arbitrary loss functions.  
However, when $\mathcal{J}$ is convex, proximal point theory ensures that the subproblem above always admits a solution for any step size $\eta$.  
In practice, PINNs often employ second-order solvers such as L-BFGS to efficiently solve the subproblem~\cref{eq:igd subproblem}, even when convexity is not strictly satisfied.  
This practical approach makes IGD a robust and effective choice for real-world applications.  
The pseudocode for implementing the IGD algorithm is provided below in~\cref{alg:igd}.
\begin{algorithm}
\caption{Mini-batch implicit gradient descent (IGD)\label{alg:igd}}
\begin{algorithmic}[1]
    \STATE {\bf Input:}

   \quad Training dataset $\mathcal{D}$;
   Batch size $B$;

   \quad Number of outer iterations $K_1$;
   Number of inner iterations $K_2$;
   
   \quad Outer (IGD) step size $\eta$;
   Inner (solver) step size $\gamma$;
   Initial parameters $\theta^0$.
    \FOR{$k=0$ \textbf{to} $K_1-1$}
        \STATE Sample a batch $\mathcal{B}_k$ of size $B$ from $\mathcal{D}$
        \STATE Define empirical loss function $\mathcal{J}_k(\theta)$ on $\mathcal{B}_k$
        \STATE {\bf Inner loop:} 
        
        Use L-BFGS optimizer to approximately solve \cref{eq:igd subproblem} with $K_2$ iterations and (internal) step size $\gamma$.

        Let $\xi^0 =\theta^k$.
            \FOR{$t=0$ \textbf{to} $K_2-1$}
                \STATE $\xi^{t+1}$ is obtained by applying one L-BFGS step to $\xi^t$ on the objective in \cref{eq:igd subproblem}.
            \ENDFOR
        \STATE Set $\theta^{k+1}=\xi^{K_2}$ as the output of the inner loop
    \ENDFOR
    \STATE {\bf Output:} Final parameters $\theta^{K_1}$
\end{algorithmic}
\end{algorithm}

\subsection{Experiment setup for Burgers' equation}\label{sec:setup burgers}
In the main text, for the sake of brevity, we only presented the results of our partial experiments. 
Here, to ensure the reliability and reproducibility of our findings, we provide comprehensive details of the experimental setup, including the specific hyperparameter choices.  

\paragraph{Experiment setup for \cref{fig:ntk}.}
The primary goal of \cref{fig:ntk} is to illustrate that the neural tangent kernel (NTK) matrix induced by the loss corresponding to the nonlinear differential operator evolves significantly during training, while the NTK matrix corresponding to the linear boundary operator remains nearly unchanged. 
To demonstrate this effect, we used a scaled two-layer neural network as the model architecture:
\begin{equation*}
    u(t,x; \theta) = \frac{1}{\sqrt{1000}} \sum_{k=1}^{1000} a_k \tanh\left(w_k^\top (t,x)\right).
\end{equation*}
The weight parameters $w_k$ were initialized using the standard normal distribution, while the outer parameters $a_k$ were initialized uniformly over the interval $[-1, 1]$.  
The training dataset consisted of 100 interior points and 20 boundary points.  
The relatively small number of data points, compared to the network width, was chosen to satisfy the overparameterization conditions assumed in NTK theory.  
Throughout training, no mini-batching was used; instead, full-batch updates were performed at every iteration.

The implicit gradient descent (IGD) algorithm was used to optimize the network parameters $a=(a_k)_{1\leq k\leq 1000}$, while the weights $w=(w_k)_{1\leq k\leq 1000}$ were kept fixed throughout training.  
The outer IGD iterations were performed for 100 steps with a step size of $\eta = 0.5$.  
At each outer iteration, the inner subproblem~\cref{eq:igd subproblem} was solved using the L-BFGS optimizer, with a step size of $0.1$ and 10 iterations per outer step.  
The result shown in~\cref{fig:ntk} was generated under these settings.  
A summary of the chosen hyperparameters is provided in~\cref{tab:ntk_hyperparams} for reference.
\begin{table}[h]
\caption{Hyperparameter settings for the experiment in \cref{fig:ntk}.}
\label{tab:ntk_hyperparams}
\begin{center}
\begin{tabular}{lcc}
\multicolumn{1}{c}{\bf Component} & \multicolumn{1}{c}{\bf Value} & \multicolumn{1}{c}{\bf Description} \\
\hline \\
Interior points      & 100      & Training data points (domain) \\
Boundary points      & 20       & Training data points (boundary) \\
Batching             & Full     & No mini-batch \\
Outer IGD steps      & 100      & Total optimization iterations \\
IGD step size        & 0.5      & Step size for outer loop \\
Inner solver         & L-BFGS   & Optimizer for each IGD step \\
L-BFGS steps         & 10       & Inner iterations per IGD step \\
L-BFGS step size     & 0.1      & Step size for L-BFGS \\
\end{tabular}
\end{center}
\end{table}

\paragraph{Experiment setup for convergence validation on Burgers' equation.}
This experiment aims to empirically validate \cref{theorem:convergence under flat}, which concerns the convergence of the random feature model when solving nonlinear equations. 
The neural network used is a two-layer model with width 100:
\begin{equation*}
    u(t,x; \theta) = \sum_{k=1}^{100} a_k \tanh\left(w_k^\top (t,x)\right).
\end{equation*}
The weights $w_k$ were initialized from a standard normal distribution, while the coefficients $a_k$ were initialized uniformly in the interval $[-1, 1]$. 
The training dataset contains 10,000 interior points and 100 boundary points, clearly not in an overparameterized regime. 
Full-batch training is employed for this experiment.

IGD algorithm is used to optimize the outer-layer parameters. 
The step size of the outer iteration is $\eta=0.5$ for 10000 steps. 
Each IGD subproblem is approximately solved using the L-BFGS optimizer (20 inner steps per outer loop) with a step size of 0.01. 
A summary of the key settings is provided in \cref{tab:loss_rf_hyperparams}.
\begin{table}[h]
\caption{Hyperparameter settings of the experiment for convergence validation on Burgers' equation.}
\label{tab:loss_rf_hyperparams}
\begin{center}
\begin{tabular}{lc}
\multicolumn{1}{c}{\bf Component} & \multicolumn{1}{c}{\bf Value} \\
\hline \\
Parameter initialization & $w_k \sim \mathcal{N}(0, \mathbf{I}_2)$,\ $a_k \sim \mathcal{U}[-1,1]$ \\
Interior points          & 10,000 \\
Boundary points          & 100 \\
Batching                 & Full batch \\
Optimizer (IGD)          & Outer steps: $10000$, step size $\eta=0.5$ \\
Inner solver (IGD)       & L-BFGS,\ 20 steps/outer step, step size $=0.01$ \\
\end{tabular}
\end{center}
\end{table}
\subsection{High-dimensional experiments}\label{sec:high dim}
To further validate the theoretical results presented in the main text, we conduct experiments on high-dimensional nonlinear partial differential equations. 
In particular, we consider both the Allen--Cahn and Fisher--KPP equations as representative examples. 
These experiments are designed to test whether our theoretical insights hold in more challenging, high-dimensional scenarios. 
The detailed settings and results for each equation are presented in the following subsections.

\subsubsection{Allen--Cahn equation}\label{sec:allen-cahn}
We consider the two-dimensional Allen–Cahn equation,
\begin{equation}
u_t = \epsilon^2 \Delta u  - (u^3-u) + S(x, y, t), \label{eq:allen}
\end{equation}
on $(x, y) \in [-1, 1]\times[-1,1]$, $ t \in [0, 1]$, with $\epsilon = 0.1$. 
The exact solution we set is
$$
u(x, y, t) = [\sin(\pi x)\cos(\pi y) + 0.1\,\sin(10\pi x)\cos(10\pi y)] e^{-t},
$$
from which $S(x, y, t)$, initial, and boundary conditions are determined.  

\textbf{Experiment 1: NTK failure in the random feature model for nonlinear PDEs.}

To solve \cref{eq:allen} within the PINN framework, we use a shallow neural network of the form 
\begin{equation*}
u_{\theta} = \frac{1}{\sqrt{1000}}\sum_{k=1}^{1000} a_k\, \tanh\left(w_k^\top \mathbf{x} + b_k\right), 
\end{equation*}
where $\mathbf{x} = (x, y, t)^\top$. The inner parameters $(w_k, b_k)$ are initialized as standard Gaussian and kept fixed, while the outer coefficients $a_k$ are initialized uniformly in $(-1,1)$ and optimized by the IGD algorithm. 
The dataset consists of 50 boundary and 200 interior points, with full-batch training. 
Optimization is performed for 100 outer steps of step size 0.5 (each with 10 L-BFGS inner iterations of step size 0.1).  
We track the relative Frobenius norm of the NTK matrices during training, as shown in \cref{fig:ntk_allen}.
\begin{figure}[h]
    \begin{center}
    \includegraphics[width=0.6\linewidth]{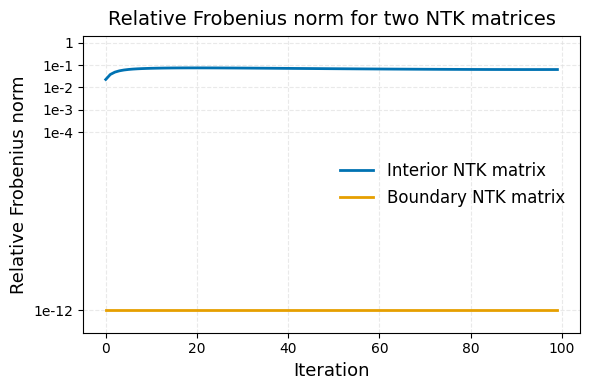}
    \end{center}
    \caption{Relative Frobenius norm of two NTK matrices for Allen--Cahn equation.}
    \label{fig:ntk_allen}
\end{figure}
The results indicates that while the NTK remains stable for the linear (boundary) operator, it changes significantly for the nonlinear (interior) operator, reflecting a breakdown of the NTK regime even in the random feature model for nonlinear problems.

\smallskip
\textbf{Experiment 2: Convergence in the random feature model.} 

In this experiment, we continue to use the random feature model $u_\theta = \sum_{k=1}^{1000} a_k\, \tanh(w_k^\top \mathbf{x} + b_k)$, with $(w_k, b_k)$ fixed after Gaussian initialization and $a_k$ initialized uniformly in $(-1,1)$ and optimized by IGD. The dataset contains 500 boundary and 10,000 interior points. Training is performed in full batch for 2,000,000 total steps (50,000 IGD outer steps with step size 0.5, each with 40 L-BFGS inner steps of step size 0.1).  

At the end of training, the $\ell_2$-norm of the loss gradient with respect to parameters is about $1.86\times10^{-3}$, indicating convergence to a critical point, which is consistent with our theoretical analysis in \cref{theorem:convergence under flat}. We note that the norm is not zero, likely due to the problem's multiscale nature and the finite training budget: even 2,000,000 steps are sometimes insufficient for full convergence in such stiff problems (prior works have reported using up to 5,000,000 steps).

\smallskip
\textbf{Experiment 3: IGD outperforms Adam on multi-scale problems with large step sizes.}

We further assess the performance of IGD and Adam on a more expressive model: a fully connected neural network comprising four layers with 100 neurons each. Biases are initialized to zero, and all other weights are initialized using Xavier normal initialization. Both IGD and Adam employ the same initialization scheme. The training data contains 500 boundary points and 20,000 interior points, with mini-batch sizes of 32 and 256 for boundary and interior points, respectively.

Both optimizers are trained for a total of 1,000,000 steps. For IGD, this corresponds to 25,000 outer steps (learning rate 0.1), with each outer step followed by 40 L-BFGS inner iterations (learning rate 0.1). Adam uses a constant learning rate of 0.1 throughout all iterations.

\cref{fig:loss_allen} shows the loss curves over the entire training process for both IGD and Adam algorithms.
With a step size of 0.1, IGD exhibits steady and stable loss reduction, whereas Adam experiences severe oscillations and fails to make substantive progress on this multiscale problem. These results further underscore the robustness and effectiveness of IGD in challenging multiscale settings.
Furthermore, \cref{fig:sol_allen} shows the solutions obtained by IGD at three representative time points. 
As shown, the learned solution captures some key features of the ground truth.
\begin{figure}[h]
    \begin{center}
        \includegraphics[width=0.8\textwidth]{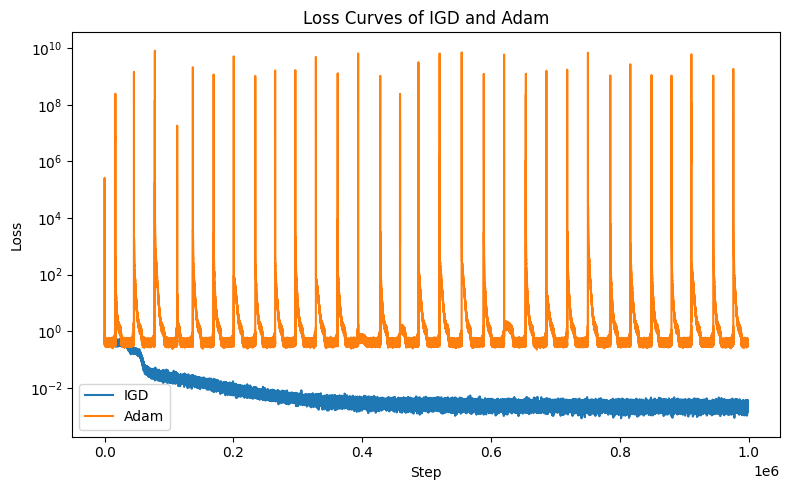}
    \end{center}
    \caption{Loss curves for IGD and Adam on Allen--Cahn equation.}
    \label{fig:loss_allen}
\end{figure}
\begin{figure}[h]
    \centering
    \begin{subfigure}[b]{0.9\textwidth}
        \centering
        \includegraphics[width=\textwidth]{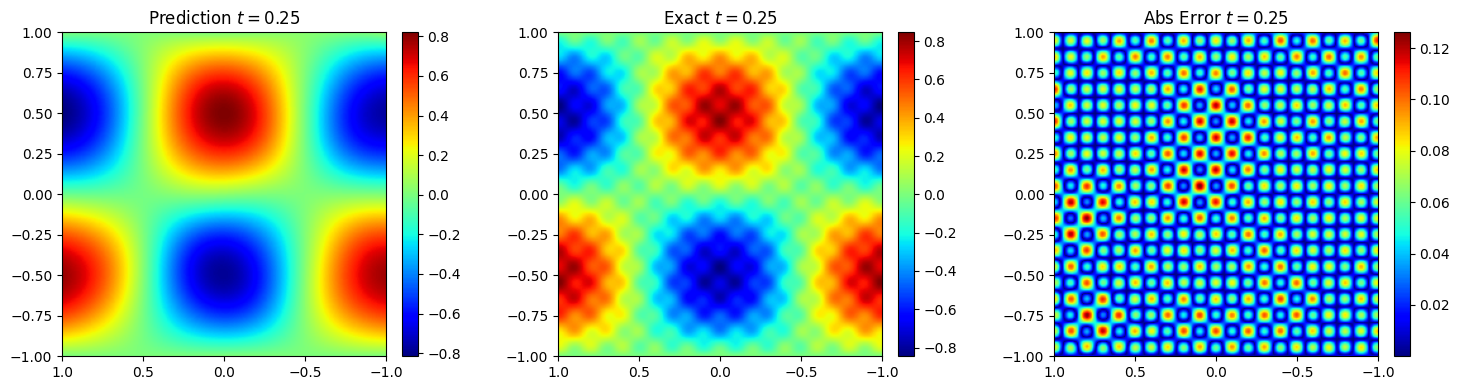}
        \caption{$t=0.25$}
        \label{fig:allen_t1}
    \end{subfigure}
    \hfill
    \begin{subfigure}[b]{0.9\textwidth}
        \centering
        \includegraphics[width=\textwidth]{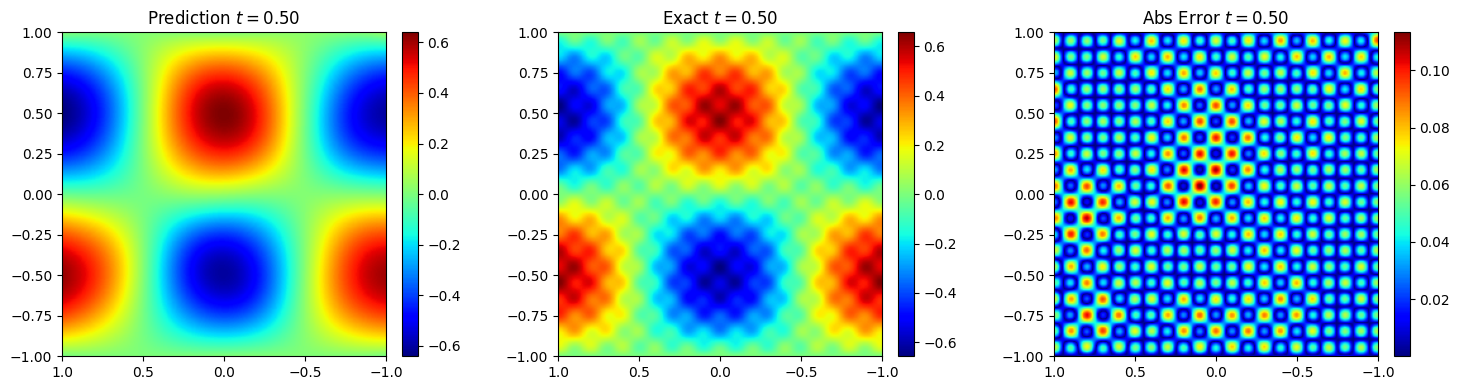}
        \caption{$t=0.5$}
        \label{fig:allen_t2}
    \end{subfigure}
    \hfill
    \begin{subfigure}[b]{0.9\textwidth}
        \centering
        \includegraphics[width=\textwidth]{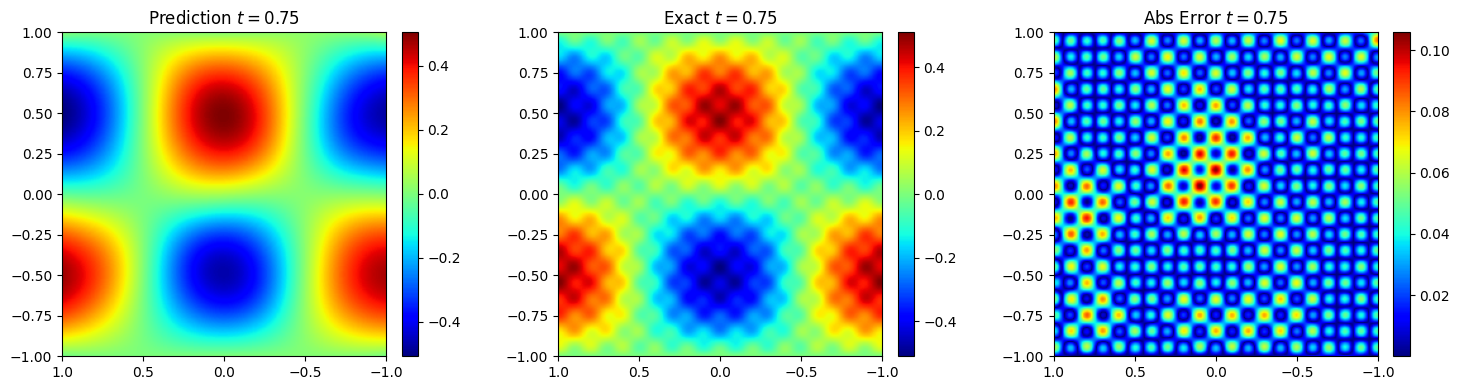}
        \caption{$t=0.75$}
        \label{fig:allen_t3}
    \end{subfigure}
    \caption{Numerical solutions obtained by IGD at 3 different time points for Allen--Cahn equation.}
    \label{fig:sol_allen}
\end{figure}

\subsubsection{Fisher--KPP equation}\label{sec:fisher-kpp}
To further supplement our main results, we present experiments on the classical two-dimensional Fisher--KPP equation, a widely studied reaction-diffusion model. The equation is given by
$$
u_t = \Delta u + u(1-u) + S(x, y, t),
$$
where $(x, y) \in [-1, 1]^2$ and $t \in [0, 1]$. The exact solution is selected as $u(x, y, t) = e^{-(x^2+y^2+t)}$, from which the source term $S(x, y, t)$ as well as the initial and boundary conditions can be directly determined.

\textbf{Experiment 1: NTK Failure in the random feature model for Nonlinear PDEs.}

In this experiment, we use a two-layer neural network,
$$
u_\theta = \frac{1}{\sqrt{1000}} \sum_{k=1}^{1000} a_k \tanh\left(w_k^\top \mathbf{x} + b_k\right),
$$
where $\mathbf{x} = (x, y, t)^\top$. 
The inner parameters $(w_k, b_k)$ are initialized as standard Gaussian random variables and then fixed, and the outer coefficients $a_k$ are initialized uniformly in $(-1, 1)$ and optimized by the IGD algorithm. 
The dataset consists of 50 boundary and 200 interior points (full batch). 
Training is performed over 100 outer steps of learning rate 0.5 (each with 10 L-BFGS inner steps of learning rate 0.1). 
We report the relative Frobenius norm of the NTK matrices during training in \cref{fig:ntk_Fisher}, illustrating that the NTK theory breaks down for the nonlinear (interior) component, as evidenced by significant changes in the NTK matrix throughout training.
\begin{figure}[h]
    \begin{center}
    \includegraphics[width=0.6\linewidth]{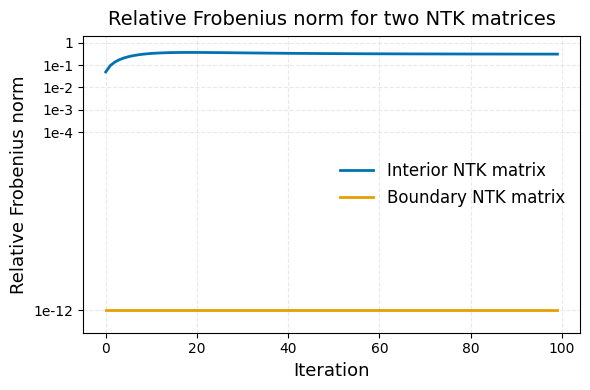}
    \end{center}
    \caption{Relative Frobenius norm of two NTK matrices for Fisher--KPP equation.}
    \label{fig:ntk_Fisher}
\end{figure}

\smallskip
\textbf{Experiment 2: Convergence in the random feature model.}

Here we retain the random feature structure but remove the $\frac{1}{\sqrt{1000}}$ normalization, using $u_\theta = \sum_{k=1}^{1000} a_k \tanh(w_k^\top \mathbf{x} + b_k)$. As before, $(w_k, b_k)$ are fixed after Gaussian initialization and $a_k$ are initialized uniformly in $(-1, 1)$ and trained by IGD. 
The data comprises 500 boundary points and 10,000 interior points (full batch). 

Training is performed for a total of 100,000 steps (2500 IGD outer steps, each with 40 L-BFGS inner steps; outer and inner step sizes are 0.5 and 0.1, respectively). At the end of training, the $\ell_2$-norm of the loss gradient with respect to the parameters is $6.74 \times 10^{-4}$, indicating that $a$ has converged to a critical point (gradient nearly zero), in accord with the theoretical results presented in \cref{theorem:convergence under flat}.

\smallskip
\textbf{Experiment 3: IGD demonstrates superior stability to Adam under large step sizes}

We compare the performance of IGD and Adam in solving the Fisher--KPP equation using a four-layer fully connected neural network with 100 neurons per hidden layer. 
The biases are initialized to zero, and all other trainable parameters are initialized using Xavier normal initialization. 
The training dataset consists of 500 boundary points and 20,000 interior points, with batch sizes of 32 and 256 for the boundary and interior, respectively.

Training is performed for 200{,}000 steps. 
Specifically, IGD is run for 5{,}000 outer iterations with a learning rate of 0.1, each comprising 40 inner L-BFGS steps (also with learning rate 0.1). 
Adam is trained for the full 200{,}000 steps with a fixed learning rate of 0.1.
As shown in \cref{fig:loss_fisher}, Adam's loss curve exhibits substantial oscillations during training, whereas the loss for IGD decreases smoothly and steadily, highlighting the superior stability of IGD. 
In addition, \cref{fig:sol_fisher} presents the solutions obtained by IGD alongside the exact solutions at three representative time points. 
The results demonstrate that IGD yields solutions in close agreement with the exact solution.
\begin{figure}[h]
    \begin{center}    
    \includegraphics[width=0.8\textwidth]{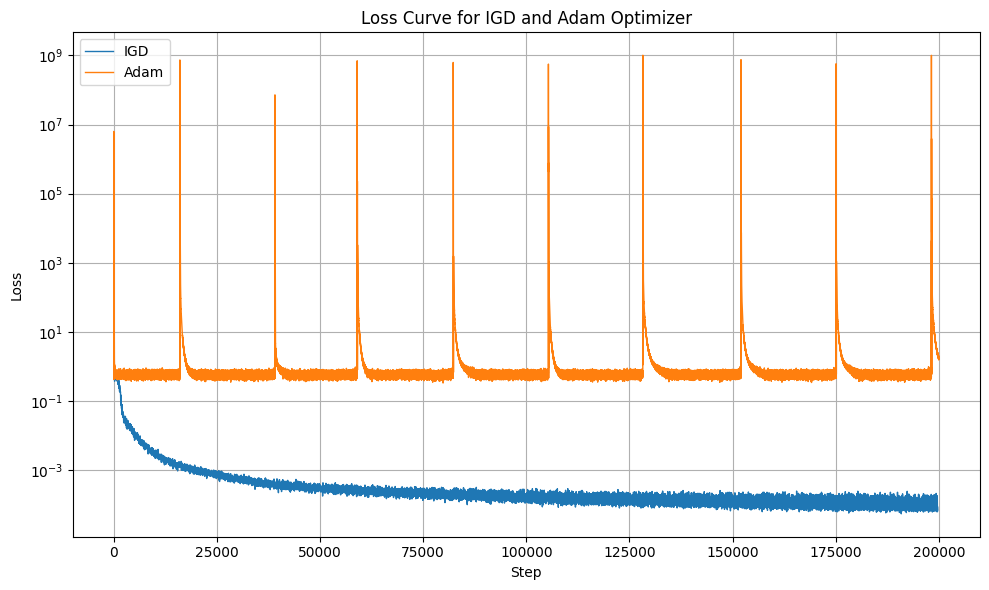}
    \end{center}
    \caption{Loss curves for IGD and Adam on Fisher--KPP equation.}
    \label{fig:loss_fisher}
\end{figure}
\begin{figure}[htbp]
    \centering
    \begin{subfigure}[b]{0.9\textwidth}
        \centering
        \includegraphics[width=\textwidth]{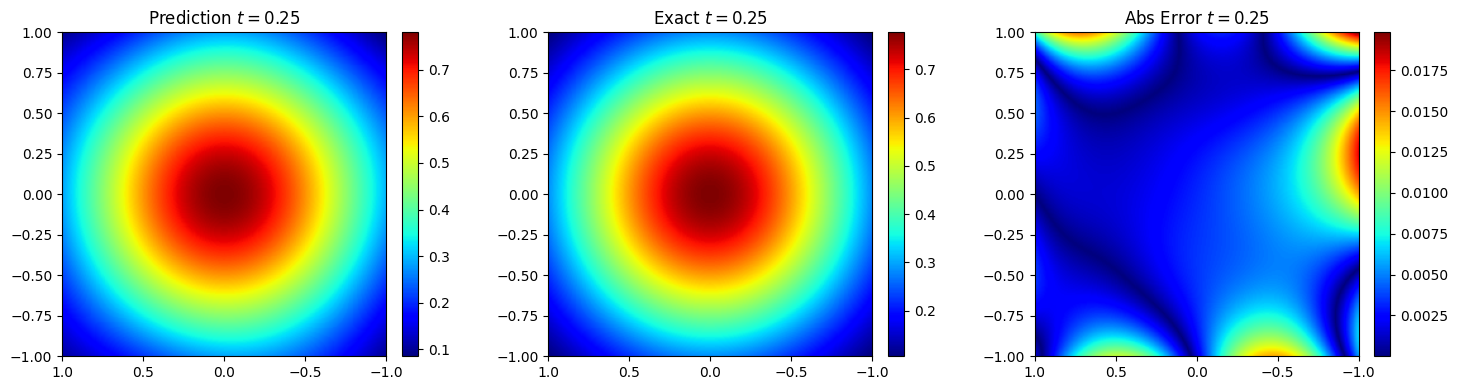}
        \caption{$t=0.25$}
        \label{fig:fisher_t1}
    \end{subfigure}
    \hfill
    \begin{subfigure}[b]{0.9\textwidth}
        \centering
        \includegraphics[width=\textwidth]{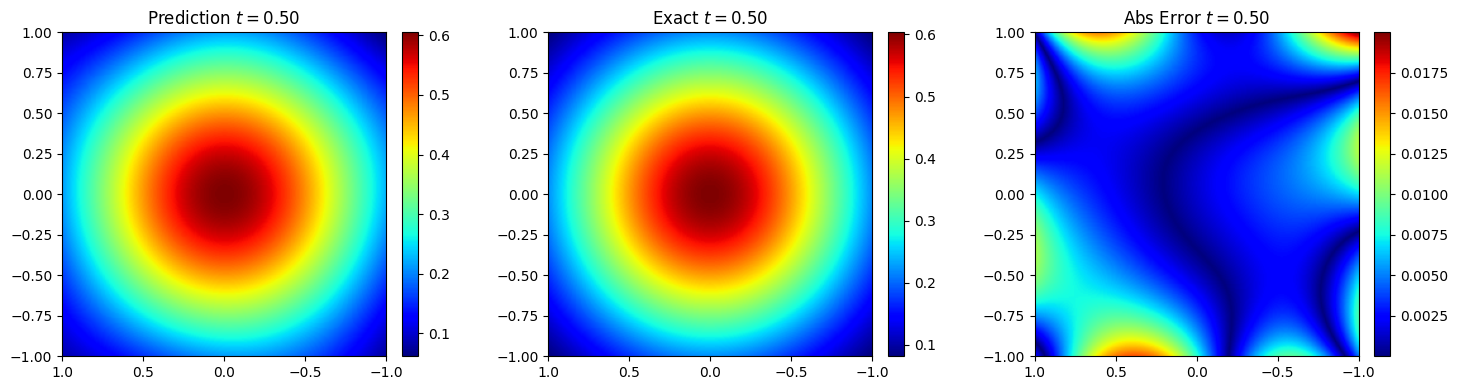}
        \caption{$t=0.5$}
        \label{fig:fisher_t2}
    \end{subfigure}
    \hfill
    \begin{subfigure}[b]{0.9\textwidth}
        \centering
        \includegraphics[width=\textwidth]{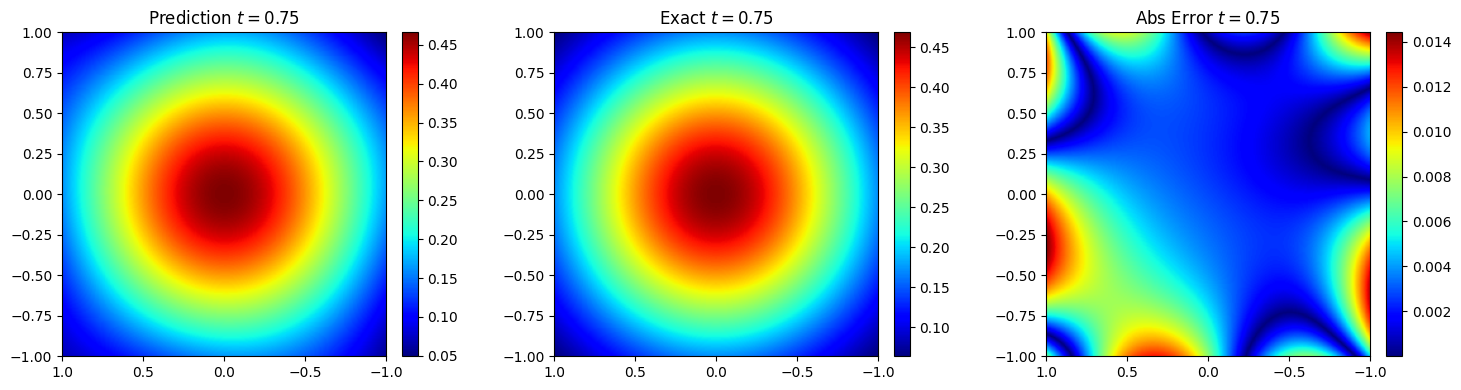}
        \caption{$t=0.75$}
        \label{fig:fisher_t3}
    \end{subfigure}
    \caption{Numerical solutions obtained by IGD at 3 different time points for Fisher--KPP equation.}
    \label{fig:sol_fisher}
\end{figure}

\subsection{Experiment to validate \texorpdfstring{\cref{theorem:convergence for PINN}}{Theorem \ref{theorem:convergence for PINN}}}\label{sec:ex_interior}

To verify the convergence result stated in \cref{theorem:convergence for PINN}, we solve the following PDE within the PINN framework:
\begin{equation*}
    -\text{div}\left((1+u^2)\nabla u\right) + q(x)u + h(u) = f(x), 
    \quad q \geq 0, \quad h(u)u \geq 0, 
    \quad (x, y) \in B(0,1),
\end{equation*}
with homogeneous Dirichlet boundary conditions.

For this experiment, we set $q=1$ and $h(u)=u^3$, and the chosen exact solution is $$u(x,y) = 1 - x^2 - y^2.$$ 
We can canculate the corresponding source term $f(x,y)$ accordingly.
The neural network employed is a random feature model, specifically a two-layer network given by
\[
u_\theta = \sum_{k=1}^{100} a_k \tanh\left(w_k^\top \mathbf{x}\right),
\]
where $\mathbf{x}=(x,y)^\T$, $a_k$ denotes the trainable outer parameters. 
Only the residual loss is considered, with training points sampled uniformly from $10,000$ locations inside $B(0,1)$. 
The homogeneous Dirichlet boundary condition can be enforced by multiplying the network output by a cutoff function $\varphi$ that vanishes on the boundary.

We optimize the outer parameters $a$ using the IGD algorithm sufficiently to ensure convergence.
Specifically, IGD updates are performed with a step size of $0.5$ for outer steps, and at each step the subproblem is solved by L-BFGS with a step size of $0.1$ for $40$ inner iterations.
Under this protocol, we test the convergence behavior starting from different random initializations of parameters. 
We report the Euclidean 2-norm of the residual loss gradient with respect to $a$ at the end of training for each initialization, as shown in \cref{tab:interior_loss_grad1}. 
The results confirm the convergence predicted by \cref{theorem:convergence for PINN}.
\begin{table}[h]
\caption{Loss gradient norms and IGD steps for different initializations.}
\label{tab:interior_loss_grad1}
\begin{center}
\begin{tabular}{lcc}
\multicolumn{1}{c}{\bf Initialization} & \multicolumn{1}{c}{$\|\nabla \mathcal{J}(a)\|_2$} & \multicolumn{1}{c}{\bf Outer IGD Steps} \\
\hline \\
Xavier normal for $w_k$, Xavier uniform for $a_k$    & $2.44 \times 10^{-4}$ & $10000$ \\
Standard normal for $w_k$, uniform $[-1,1]$ for $a_k$ & $1.45 \times 10^{-4}$ & $200000$ \\
LeCun normal initialization for both $w_k$ and $a_k$ & $1.37 \times 10^{-4}$ & $200000$ \\
\end{tabular}
\end{center}
\end{table}

We note that the latter two initializations require more IGD steps to reach convergence. This is because their initial losses are relatively large, resulting in longer optimization trajectories.

\section{Convergence Analysis for SGD}\label{sec:sgd analysis}

In this section, we investigate whether the convergence results established for full-batch optimization in the main text (\cref{thm:cong for lin} and \cref{theorem:convergence under flat}) can be extended to stochastic gradient descent (SGD). 
We specifically compare convergence behaviors under SGD for linear and nonlinear PDEs, and highlight the key challenges and open questions arising in the stochastic setting.

\subsection{SGD Convergence for solving Linear PDEs}\label{sec:sgd linear}

As discussed throughout this work, convergence analysis for linear PDEs is fundamentally based on NTK theory. 
Notably, \cite{xu2024overparametrized} demonstrated that, in supervised learning, one-pass SGD with streaming data---where each iteration samples a fresh, non-repeating point from a continuously distributed dataset---admits a deterministic limit kernel. 
This insight strongly motivates the use of NTK-based arguments for analyzing the convergence of overparameterized neural networks for linear PDEs within the PINN framework.

However, extending rigorous theoretical results to PINNs in the SGD setting is significantly more complex. 
Formal proofs demand careful treatment of the interplay between the data distribution, sampling procedure, and network overparameterization, which is beyond the scope of this work. 
We leave such comprehensive theoretical analysis for future studies.

\subsection{SGD Convergence for Nonlinear PDEs}\label{sec:sgd nonlinear}

For nonlinear PDEs, NTK theory is generally inapplicable, and our main text relies instead on the Łojasiewicz inequality for convergence analysis. 
Recently, works such as \cite{dereich2021convergence, an2023convergence} have established convergence guarantees for SGD under certain conditions, notably the boundedness of trajectories, by leveraging the Łojasiewicz inequality as the key tool.
While these results provide a natural foundation, applying them directly in the PINN context presents new challenges. 
In standard supervised learning, the required trajectory conditions can hold with probability one under suitable assumptions; however, for PINNs, it is only straightforward to establish that the probability of bounded SGD trajectories is positive, without control over its magnitude. 
Quantifying this probability and fully characterizing convergence probabilities remains an open question in the PINN setting.

In summary, while the foundational tools for extending convergence results to SGD exist, a complete understanding—especially for nonlinear PDEs—requires further investigation. In particular, characterizing the likelihood of favorable SGD behavior in the PINN setting represents an important direction for future research.

\end{document}